\newtheorem{theorem}{Theorem}
\newtheorem{lemma}[theorem]{Lemma}
\newtheorem{proposition}[theorem]{Proposition}
\theoremstyle{definition}
\newtheorem{definition}[theorem]{Definition}
\newtheorem{example}[theorem]{Example}
\theoremstyle{remark}
\newtheorem{remark}[theorem]{Remark}
\numberwithin{equation}{section} \numberwithin{theorem}{section}
\newcommand{\thref}[1]{Theorem~{\rm\ref{#1}}}
\newcommand{\leref}[1]{Lemma~{\rm\ref{#1}}}
\newcommand{\deref}[1]{Definition~{\rm\ref{#1}}}
\newcommand\lbb[1]{\label{#1}}
\def\as{associative}
\def\conalg{conformal algebra}
\def\difalg{differential algebra}
\def\psalg{pseudo\-algebra}
\def\psalgs{pseudo\-algebras}
\def\tt{\otimes}                               %% tensor product
\def\<{\langle}
\def\>{\rangle}
\def\what{\widehat}
\def\d{\partial}
\def\tsum{{\textstyle\sum}}
\def\dsum{\displaystyle\sum}
\def\surjto{\twoheadrightarrow}                %%  -->> surjection
\def\st{\; | \;}                               %%  such that
\def\symm{S}                                   %% the symmetric group
\newcommand\xb[3]{[{#1}_{#2} {#3}]}
\newcommand\xp[3]{{#1}_{#2} {#3}}
\def\Zset{\mathbb{Z}}       % integers
\def\Kset{{\mathbf{k}}}                % a field
\newcommand{\kk}{\mathbf{k}}
\newcommand{\fg}{\mathfrak{g}}
\def\be{\beta}
\def\ga{\gamma}
\def\Ga{\Gamma}
\def\de{\delta}
\def\De{\Delta}
\def\ep{\varepsilon}
\def\la{\lambda}
\def\si{\sigma}
\def\g{{\mathfrak{g}}}      % Lie algebra g
\def\dd{{\mathfrak{d}}}
\def\A{{\mathcal{A}}}
\def\M{\mathcal{M}}
\def\O{\mathcal{O}}
\def\F{\mathcal{F}}           % Fourier transform
\def\D{\mathcal{D}}           % category D
\def\ue{U}                 % univ enveloping alg
 \DeclareMathOperator{\codim}{codim}
\DeclareMathOperator{\Span}{span}
 \DeclareMathOperator{\id}{id}
\DeclareMathOperator{\gplk}{G}     % group-like elements
\DeclareMathOperator{\primt}{P}    % primitive elements
\DeclareMathOperator{\fil}{F}      % for filtrations
\DeclareMathOperator{\coh}{H}      % for (co)homology
\DeclareMathOperator{\Hom}{Hom}
\DeclareMathOperator{\Lin}{Lin}     % for polylinear maps
\DeclareMathOperator{\End}{End} \DeclareMathOperator{\Chom}{Chom}
\DeclareMathOperator{\Cend}{Cend} 
\DeclareMathOperator{\gc}{gc}
 \DeclareMathOperator{\Cur}{Cur}
\begin{document}

\title{On  pseudo-bialgebras}

\author[C. Boyallian]{Carina Boyallian$^*$}
%\address{}
%\email{boyallia@mate.uncor.edu, joseliberati@gmail.com}
\author[J. Liberati]{Jos\'e I. Liberati$^*$}
%    \thanks will become a 1st page footnote.
\thanks{$^*$ Ciem-Famaf, Universidad Nacional de C\'ordoba,
Ciudad Universitaria, (5000) C\'ordoba, Argentina.  E-mail
addresses:  boyallia@mate.uncor.edu, joseliberati@gmail.com \hfill
\break \vskip .2cm \noindent{\bf Corresponding author: }Jos\'e I.
Liberati,
 \hfill \break
 Ciem-Famaf, Universidad Nacional de C\'ordoba,
Medina Allende y Haya de la Torre, Ciudad Universitaria, (5000)
C\'ordoba, Argentina. \hfill \break Tel: +54 (351) 4334051. Fax:
+54 (351) 4334054}
%\date{January 26, 2007}
%\subjclass{Primary ;\\Secondary }

%\keywords{d}

\begin{abstract}We  study  pseudoalgebras  from the  point of
 view of pseudo-dual of classical  Lie coalgebra
 structures. We define the notions
of Lie $H$-coalgebra and Lie pseudo-bialgebra. We obtain the
 analog of the CYBE, the Manin
triples and Drinfeld's double for Lie pseudo-bialgebras. We also
get a natural description of the annihilation algebra associated
to a pseudoalgebra as a convolution algebra, clarifying this
constructions in the theory of pseudoalgebras.
\end{abstract}

\maketitle

 \pagebreak

%\tableofcontents \pagebreak
%%%%%%%%%%%%%%%%%%%%%%%%%%%%%%%%%%%%%%%%%%%%%%%%%%%%%%%%%%%%%%%%%%%%%%%%%
%%%%%%%%%%%%%%%%%%%%%%%%%%%%%%%%%%%%%%%%
\section{Introduction}%\lbb{sintro}
%%%%%%%%%%%%%%%%%%%%%%%%%%%%%%%%%%%%%%%%

The notion of conformal algebra was introduced by V. Kac as a
formal language describing the singular part of the operator
product expansion in two-dimensional conformal field theory (see
\cite{BKL}, \cite{BKV}, \cite{DK}, \cite{K2}, \cite{K3}, and
references there in).

In \cite{BDK}, Bakalov, D'Andrea and Kac develop a theory of
"multi-dimensional" Lie conformal algebras, called Lie
$H$-pseudoalgebras. Classification problems, cohomology theory and
representation theory have been developed (see  \cite{BDK},
\cite{BDK2}, \cite{BDK3}).

In the present work, we  study  Lie $H$-pseudoalgebras  from the
point of view of pseudo-dual of classical  Lie $H$-coalgebra
structures. We introduce the notions of Lie $H$-coalgebra and Lie
$H$-pseudo-bialgebra (see Section \ref{sec:pseudobialg}). In
Sections \ref{sec:cob}, \ref{submanin} and \ref{subdrinf}, we
obtain a pseudoalgebra analog of the CYBE, we study coboundary Lie
$H$-pseudo-bialgebras, and a pseudoalgebra version of Manin
triples and Drinfeld's double. Usually, in the theory of Lie
$H$-pseudoalgebras the proofs of pseudoalgebra version of
classical results need to be carefully translated, as in the
present work.

Two Lie algebras are usually associated to a Lie $H$-pseudoalgebra
$L$, that is $\A_Y(L)$ and the annihilation algebra (see \cite{K2}
and Section 7 in \cite{BDK}). Their construction, at first sight,
is not natural unless one look at a similar notion from vertex
algebra theory. In Section \ref{subanni}, using the language of
Lie $H$-coalgebras, we will see them as convolution algebras of
certain type, obtaining a more natural and conceptual construction
of them.

This work is a  generalization to the language of Lie
$H$-pseudoalgebras of the results obtained in \cite{L1}.

\vskip 1cm

%%%%%%%%%%%%%%%%%%%%%%%%%%
\section{Preliminaries on Hopf Algebras}\lbb{sprelh}

%%%%%%%%%%%%%%%%%%%%%%%%%%

\vskip .5cm

Unless otherwise specified, all vector spaces, linear maps and
tensor products are considered over an algebraically closed field
$\kk$ of characteristic 0.

 In  this section we present some facts
and notation which will be used throughout the paper. The material
in Sections \ref{subnotid} and \ref{subfiltop} is standard and can
be found, for example, in Sweedler's book \cite{Sw}. The material
in Section~\ref{subfour} is taken from \cite{BDK}.

%%%%%%%%%%%%%%%%%%%%%%%%%%
\subsection{Notation and basic identities}\lbb{subnotid}
%%%%%%%%%%%%%%%%%%%%%%%%%%
Let $H$ be a
%cocommutative
Hopf algebra with a coproduct $\De$, a counit $\ep$, and an
antipode $S$.

We will use the standard Sweedler's notation (cf.\ \cite{Sw}):
\begin{align}
\lbb{de1} \De(h) &= h_{(1)} \tt h_{(2)},
\\
\lbb{de2} (\De\tt\id)\De(h) &= (\id\tt\De)\De(h) = h_{(1)} \tt
h_{(2)} \tt h_{(3)},
\\
\lbb{de3} (S\tt\id)\De(h) &= h_{(-1)} \tt h_{(2)},
\quad\text{etc.}
\end{align}
Observe that notation \eqref{de2} uses the coassociativity of
$\De$. The axioms of the antipode and the counit can be written as
follows:
\begin{align}
\lbb{antip} h_{(-1)} h_{(2)} &= h_{(1)} h_{(-2)} = \ep(h),
\\
\lbb{cou} \ep(h_{(1)}) h_{(2)} &= h_{(1)} \ep(h_{(2)}) = h,
\end{align}
while the fact that $\De$ is a homomorphism of algebras translates
as:
\begin{equation*}
\lbb{deprod} (fg)_{(1)} \tt (fg)_{(2)} = f_{(1)} g_{(1)} \tt
f_{(2)} g_{(2)}.
\end{equation*}
Equations \eqref{antip} and \eqref{cou} imply the following useful
relations:
\begin{equation}
\lbb{cou2} h_{(-1)} h_{(2)} \tt h_{(3)} = 1\tt h = h_{(1)}
h_{(-2)} \tt h_{(3)}.
\end{equation}

\

Since we shall work with cocommutative Hopf algebras, we recall
the following important and classical result (for the proof see
\cite{Sw}):

%%%%%%%%%%%%%%%%%%%%%%%%%%%
\begin{theorem}[Kostant]\lbb{tkostant}
Let $H$ be a cocommutative Hopf algebra over $\Kset$ $($an
algebraically closed field of characteristic $0)$. Then $H$ is
isomorphic $($as a Hopf algebra$)$ to the smash product of the
universal enveloping algebra $\ue(\primt(H))$ and the group
algebra $\Kset[\gplk(H)]$, where $\gplk(H)=\{\, g\in H\, | \,
\De(g)=g\tt g\}$ is the subset of group-like elements of $H$, and
$\primt(H) = \{\,p\in H\, |\, \De(p)=p\tt 1 + 1\tt p\}$ is the
subspace of primitive elements of $H$.
\end{theorem}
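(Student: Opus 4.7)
\medskip

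\noindent\textbf{Proof plan.} The strategy is to decompose $H$ in two stages: first peel off the group-like part using the coradical filtration, and then identify the remaining connected piece with a universal enveloping algebra. Both ingredients are classical; the plan splits into identifying the coradical, reconstructing $H$ as a smash product, and the deeper identification of an irreducible cocommutative Hopf algebra with an enveloping algebra.

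I would begin with the coradical filtration. Since $\Kset$ is algebraically closed, every simple cocommutative subcoalgebra of $H$ is one-dimensional, hence spanned by a group-like element, so $H_0 = \Kset[\gplk(H)]$ and $H$ is pointed. Consequently $H$ splits as a direct sum of its irreducible components $H = \bigoplus_{g \in \gplk(H)} C_g$, and cocommutativity forces $C_g = g \cdot C_1 = C_1 \cdot g$, where $C_1$ is the component through $1$. Checking on components gives $C_g \cdot C_h \subset C_{gh}$, $\De(C_1) \subset C_1 \tt C_1$, and $S(C_1) \subset C_1$, so $C_1$ is a sub-Hopf algebra normalized by $\Kset[\gplk(H)]$. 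Assembling the pieces yields a smash product decomposition $H \cong C_1 \smash \Kset[\gplk(H)]$ of Hopf algebras.

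The final step is to identify $C_1$ with $\ue(\primt(H))$, and this is where I expect the main difficulty to lie. Each primitive element generates, together with $1$, an irreducible coalgebra through $1$ and hence lies in $C_1$, so the inclusion $\primt(H) \hookrightarrow C_1$ extends to a Hopf algebra morphism $\phi \colon \ue(\primt(H)) \to C_1$ by the universal property. Showing $\phi$ is an isomorphism is the heart of the argument: the standard route is to induct on the coradical filtration of $C_1$, constructing at each stage a preimage of a given element by a symmetrization of primitives. The symmetrization divides by factorials, and this is precisely where the characteristic zero hypothesis becomes indispensable (in positive characteristic the conclusion fails: divided-power Hopf algebras provide counterexamples). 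Once $\phi$ is shown to be a bijection, combining with the smash product decomposition from the previous step delivers the claimed isomorphism.
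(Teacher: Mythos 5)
The paper offers no proof of this theorem at all---it defers entirely to Sweedler's book \cite{Sw}---and your outline is precisely the standard Cartier--Gabriel--Kostant argument given there: pointedness of $H$ over an algebraically closed field, the decomposition into irreducible components indexed by group-like elements yielding the smash product, and the characteristic-zero identification (via symmetrization along the coradical filtration) of the irreducible component of the identity with $\ue(\primt(H))$. Your plan is correct and takes essentially the same approach as the cited source.
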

%%%%%%%%%%%%%%%%%%%%%%%%%%%

An \as\ algebra $A$ is called an {\em $H$-\difalg\/} if it is also
a left $H$-module such that the multiplication $A\tt A\to A$ is a
homomorphism of $H$-modules. That is,
\begin{equation}\lbb{hxy}
h(xy) = (h_{(1)}x) (h_{(2)}y)
\end{equation}
for $h\in H$, $x,y\in A$. Observe that $H$ itself is an
$H$-bimodule, however $H$ is not an $H$-\difalg{}.\

\

Another important property  of a cocommutative Hopf algebra is
that the antipode is an involution, i.e., $S^2 = \id$, which will
be convenient in allowing us not to distinguish between $S$ and
its inverse.

%%%%%%%%%%%%%%%%%%%%%%%%%%%

%%%%%%%%%%%%%%%%%%%%%%%%%%
\subsection{Filtration and topology}\lbb{subfiltop}
%%%%%%%%%%%%%%%%%%%%%%%%%%
%{}From now on, $H$ will always be a cocommutative Hopf algebra.
We consider an increasing %filtration by
sequence of subspaces of a Hopf algebra $H$ defined inductively
by:
\begin{align*}
%\lbb{fil0h}
\fil^n H &=0 \;\;\text{for $n<0$,} \quad \fil^0 H =
\Kset[\gplk(H)],
\\
%\lbb{filh}
\fil^n H &= \Span_\kk \big\{ h\in H \;\big|\; \De(h)\in \fil^0
H\tt h + h\tt\fil^0 H
\\
&\qquad\qquad\qquad\qquad + \tsum_{i=1}^{n-1} \fil^i
H\tt\fil^{n-i} H \big\}, \qquad n\geq 1.\nonumber
\end{align*}
It has the following properties (which are immediate from
definitions):
\begin{align*}
%\lbb{filh0}
%\bigcup_n \fil^n H &= H,
%\\
%\lbb{filh1}
(\fil^m H) (\fil^n H) &\subset \fil^{m+n} H,
\\
%\lbb{filh2}
\De(\fil^n H) &\subset \tsum_{i=0}^n \fil^i H\tt\fil^{n-i} H,
\\
%\lbb{filh3}
S(\fil^n H) &\subset \fil^n H.
\end{align*}
If $H$ is cocommutative, using \thref{tkostant}, one can show
that:
\begin{equation}\lbb{filh0}
\bigcup_n \fil^n H = H.
\end{equation}
(This condition is also satisfied when $H$ is a quantum universal
enveloping algebra). Provided that \eqref{filh0} holds, we say
that a nonzero element $a\in H$ has degree $n$ if $a \in \fil^n H
\setminus \fil^{n-1} H$.

When $H$ is a universal enveloping algebra, we get its canonical
filtration. Later in some instances we will also impose the
following finiteness condition on $H$:
\begin{equation}\lbb{filh4}
\dim \fil^n H < \infty \qquad \forall n.
\end{equation}
It is satisfied when $H$ is a universal enveloping algebra of a
finite-dimensional Lie algebra, or its smash product with the
group algebra of a finite group.

Now let $X=H^* := \Hom_\Kset(H,\Kset)$ be the dual of $H$. It
inherits a multiplication defined as the dual of the
comultiplication in $H$.  Recall that $H$ acts (on the left) on
$X$ by the formula ($h,f\in H$, $x\in X$):
\begin{equation}\lbb{hx}
\langle hx, f\rangle = \langle x, S(h)f\rangle.
\end{equation}
Then, since
\begin{equation*}\lbb{dif-alg-X}
h(xy) = (h_{(1)}x ) (h_{(2)}y ),
\end{equation*}
we have that $X$ is an \as\ $H$-\difalg\ (see \eqref{hxy}).
Moreover, $X$ is commutative when $H$ is cocommutative. Similarly,
one can define a right action of $H$ on $X$ by
\begin{equation}\lbb{xh}
\langle xh, f\rangle = \langle x, f S(h)\rangle,
\end{equation}
and then we have
\begin{equation}\lbb{xyh}
(xy)h = (x h_{(1)}) (y h_{(2)}).
\end{equation}
Observe that associativity of $H$ implies that $X$ is an
$H$-bimodule, i.e.
\begin{equation}\lbb{fxg}
f(xg) = (fx)g, \qquad f,g\in H, \; x\in X.
\end{equation}

Let $X=\fil_{-1}X\supset\fil_0X\supset\dotsm$ be the decreasing
sequence of subspaces of $X$ dual to $\fil^n H$, namely
$$
\fil_n X = (\fil^n H)^\perp=\{\, x\in X\,|\, \langle x, f\rangle
=0, \hbox{ for all } f\in \fil^n H\,\}.
$$
It has the following properties:
\begin{align}
\lbb{fils1} (\fil_m X) (\fil_n X) &\subset \fil_{m+n} X,
\\
\lbb{fils5} (\fil^m H) (\fil_n X) &\subset \fil_{n-m} X,
\end{align}
and
\begin{equation}\lbb{fils0}
\bigcap_n \fil_n X = 0, \quad\text{provided that \eqref{filh0}
holds}.
\end{equation}
We define a topology of $X$ by considering $\{\fil_n X\}$ as a
fundamental system of neighborhoods of $0$. We will always
consider $X$ with this topology, while $H$ with the discrete
topology. It follows from \eqref{fils0} that $X$ is Hausdorff,
provided that \eqref{filh0} holds. By \eqref{fils1} and
\eqref{fils5}, the multiplication of $X$ and the action of $H$ on
it are continuous; in other words, $X$ is a topological
$H$-\difalg.

Now, we define an antipode $S\colon X\to X$ as the dual of that of
$H$:
\begin{equation}\lbb{sxhxsh}
\langle S(x), h \rangle = \langle x, S(h) \rangle .
\end{equation}
Then we have:
\begin{equation}\lbb{sabsbsa}
S(ab) = S(b) S(a) \qquad\text{for}\quad a,b \in X \;\text{or}\; H.
\end{equation}

We will also define a comultiplication $\De\colon X\to
X\widehat{\tt}X$ as the dual of the multiplication $H\tt H\to H$,
where $X\widehat{\tt}X := (H\tt H)^*$ is the completed tensor
product. Formally, we will use the same notation for $X$ as for
$H$ (see \eqref{de1}--\eqref{de3}), writing for example $\De(x) =
x_{(1)} \tt x_{(2)}$ for $x\in X$. By definition, for $x,y \in X$,
$f,g\in H$, we have:
\begin{align}
\lbb{xy,f} \langle xy,f \rangle &= \langle x\tt y, \De(f) \rangle
= \langle x,f_{(1)} \rangle \langle y,f_{(2)} \rangle ,
\\
\lbb{x,fg} \langle x,fg \rangle &= \langle \De(x), f\tt g \rangle
= \langle x_{(1)}, f \rangle \langle x_{(2)}, g \rangle.
\end{align}
We have:
\begin{align}
\lbb{fils3} S(\fil_n X) &\subset \fil_n X,
\\
\lbb{fils2} \De(\fil_n X) &\subset \dsum_{i=-1}^n \fil_i
X\what\tt\fil_{n-i} X.
\end{align}
If $H$ satisfies the finiteness condition \eqref{filh4}, then the
filtration of $X$ satisfies
\begin{equation}\lbb{fils4}
\dim X / \fil_n X < \infty \qquad \forall n,
\end{equation}
which implies that $X$ is linearly compact (see Sect.6 in
\cite{BDK} for details).

By a basis of $X$ we will always mean a topological basis
$\{x_i\}$ which tends to $0$, i.e., such that for any $n$ all but
a finite number of $x_i$ belong to $\fil_n X$. Let $\{h_i\}$ be a
basis of $H$ (as a vector space) compatible with the increasing
filtration. Then the set of elements $\{x_i\}$ of $X$ defined by
$\langle x_i, h_j \rangle = \de_{ij}$ is called the dual basis of
$X$. If $H$ satisfies \eqref{filh4}, then $\{x_i\}$ is a basis of
$X$ in the above sense, i.e., it tends to $0$. We have for $g\in
H$, $y\in X$:
\begin{equation}\label{AAA}
g = \tsum_i \, \langle g, x_i \rangle h_i, \quad y = \tsum_i \,
\langle y, h_i \rangle x_i,
%\qquad\text{(finite sums).}
\end{equation}
where the first sum is finite, and the second one is convergent in
$X$.

%%%%%%%%%%%%%%%%%%%%%%%%%%%

\

\begin{example}\lbb{eued}
Let $H=\ue(\dd)$ be the universal enveloping algebra of an
$N$-dimension\-al Lie algebra $\dd$. Fix a basis $\{\d_i\}$ of
$\dd$, and for $I = (i_1,\dots,i_N) \in\Zset_+^N$ let $\d^{(I)} =
\d_1^{i_1} \dotsm \d_N^{i_N} / i_1! \dotsm i_N!$. Then
$\{\d^{(I)}\}$ is a basis of $H$ (the Poincar\'e--Birkhoff--Witt
basis). Moreover, it is easy to see that
\begin{equation}\lbb{dedn}
\De(\d^{(I)}) = \sum_{ J+K=I } \d^{(J)} \tt \d^{(K)}.
\end{equation}
If $\{t_I\}$ is the dual basis of $X$, defined by $\langle t_I,
\d^{(J)} \rangle = \de_{I,J}$, then \eqref{dedn} implies $t_J t_K
= t_{ J+K }$. Therefore, $X$ can be identified with the ring $\O_N
= \Kset[[t_1,\dots,t_N]]$ of formal power series in $N$
indeterminates. Then the action of $H$ on $\O_N$ is given by
differential operators.
\end{example}
%%%%%%%%%%%%%%%%%%%%%%%%%%%

\

%%%%%%%%%%%%%%%%%%%%%%%%%%%

%%%%%%%%%%%%%%%%%%%%%%%%%%
\subsection{Fourier transform}\lbb{subfour} (cf. \cite{BDK})
%%%%%%%%%%%%%%%%%%%%%%%%%%
For an arbitrary Hopf algebra $H$, we introduce a map $\F\colon
H\tt H \to H\tt H$, called the {\em Fourier transform}, by the
formula
\begin{equation}
\lbb{ftrans} \F(f\tt g) = (f\tt 1) \, (S\tt\id)\De(g) = f g_{(-1)}
\tt g_{(2)}.
\end{equation}
Observe  that $\F$ is a vector space isomorphism with an inverse
given by
\begin{equation*}
\lbb{finv} \F^{-1}(f\tt g) = (f\tt 1) \,\De(g) = f g_{(1)} \tt
g_{(2)},
\end{equation*}
since, using the coassociativity of $\De$ and \eqref{cou2}, we
have
\begin{displaymath}
\F^{-1}(f g_{(-1)} \tt g_{(2)}) = f g_{(-1)} (g_{(2)})_{(1)} \tt
(g_{(2)})_{(2)} = f g_{(-1)} g_{(2)} \tt g_{(3)} = f\tt g.
\end{displaymath}
The significance of $\F$ is in the identity
\begin{equation}\lbb{sigfour}
f\tt g = \F^{-1}\F(f\tt g) = (f g_{(-1)} \tt 1) \, \De(g_{(2)}),
\end{equation}
which,  implies the next result.

%%%%%%%%%%%%%%%%%%%%%%%%%%%
\begin{lemma}\cite{BDK} \lbb{lhhh}
(a) If  $\{h_i\}$, $\{x_i\}$ are dual bases in $H$ and $X$, then
\begin{equation}\lbb{xshxy}
\De(x) = \dsum_i \, xS(h_i) \tt x_i
       = \dsum_i \, x_i \tt S(h_i) x
\end{equation}
for any $x\in X$.

\noindent (b) Every element of $H\tt H$ can be uniquely
represented in the form $\sum_i \, (h_i \tt 1)\De(l_i)$, where
$\{h_i\}$ is a fixed $\Kset$-basis of $H$ and $l_i\in H$. In other
words, $H\tt H = (H\tt\Kset)\De(H)$.

\noindent (c)
\begin{equation}\label{identity} (1\tt g\tt 1)\tt_H 1= (g_{(-1)}\tt 1 \tt
g_{(-2)})\tt_H 1\in (H\tt H\tt H)\tt_H \mathbf{k}
\end{equation}
for all $g\in H$.
\end{lemma}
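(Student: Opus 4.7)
For part (a), I would verify each formula by pairing with arbitrary $f\tt g\in H\tt H$. By \eqref{x,fg}, $\langle\De(x),f\tt g\rangle=\langle x,fg\rangle$. Pairing the claimed right-hand side of the first formula with $f\tt g$ yields $\sum_i\langle xS(h_i),f\rangle\langle x_i,g\rangle$; the right-action identity \eqref{xh} together with $S^2=\id$ (available for cocommutative $H$) converts $\langle xS(h_i),f\rangle$ into $\langle x,fh_i\rangle$, while the dual-basis expansion $g=\sum_i\langle g,x_i\rangle h_i$ from \eqref{AAA} collapses the sum to $\langle x,fg\rangle$. Nondegeneracy of the pairing closes the argument, and the second formula follows symmetrically using the left-action identity \eqref{hx}.

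For part (b), the Fourier transform $\F$ of \eqref{ftrans} is a vector-space automorphism of $H\tt H$ with inverse $\F^{-1}(a\tt b)=(a\tt 1)\De(b)$. For existence, given $z\in H\tt H$ I expand $\F(z)=\sum_i h_i\tt l_i$ as a finite sum against the basis $\{h_i\}$ and obtain $z=\F^{-1}(\F(z))=\sum_i(h_i\tt 1)\De(l_i)$. For uniqueness, if $\sum_i(h_i\tt 1)\De(l_i)=0$, applying $\F$ returns $\sum_i h_i\tt l_i=0$, and linear independence of $\{h_i\}$ forces each $l_i=0$.

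For part (c), my plan is in two steps. Step one is an identity in $H^{\otimes 3}$,
\begin{equation*}
\sum_{\De^{(2)}(g)=g_{(1)}\tt g_{(2)}\tt g_{(3)}}\bigl(S(g_{(1)})\tt 1\tt S(g_{(3)})\bigr)\cdot g_{(2)}=1\tt g\tt 1,
\end{equation*}
where $g_{(2)}$ acts on the triple tensor through $\De^{(2)}$. Unrolling the action and using coassociativity reorganize the left side as a Sweedler sum over $\De^{(4)}(g)$ of $S(g_{(1)})g_{(2)}\tt g_{(3)}\tt S(g_{(5)})g_{(4)}$; cocommutativity swaps the adjacent Sweedler indices $(4,5)$, after which the antipode contractions $S(g_{(1)})g_{(2)}=\ep$ and $S(g_{(4)})g_{(5)}=\ep$ together with two counit identities reduce the expression all the way down to $1\tt g\tt 1$. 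Step two applies the defining relation $(x\cdot h)\tt_H 1=\ep(h)\,x\tt_H 1$ of the tensor product over $H$, term-by-term over the Sweedler sum $\De^{(2)}(g)$, with $x=(S(g_{(1)})\tt 1\tt S(g_{(3)}))$ and $h=g_{(2)}$: by step one, the left-hand sum collapses to $(1\tt g\tt 1)\tt_H 1$, while on the right-hand side the counit identity $(\id\tt\ep\tt\id)\De^{(2)}=\De$ absorbs $\ep(g_{(2)})$ and leaves $(S(g_{(1)})\tt 1\tt S(g_{(2)}))\tt_H 1$.

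The main obstacle is the Sweedler bookkeeping in step one of (c): one has to track several layers of coassociativity, deploy cocommutativity at precisely the right place to bring indices into adjacency for antipode contraction, and iterate the contract-and-counit reduction. Parts (a) and (b) are largely mechanical once the appropriate pairing or Fourier inversion is set up.
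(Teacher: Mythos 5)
Your proposal is correct, and for part (b) it is exactly the paper's mechanism: the paper derives $\F^{-1}$ and the identity \eqref{sigfour} and states that these imply the lemma (deferring details to \cite{BDK}), which is precisely your Fourier-inversion argument. Your direct verifications of (a) by pairing against $H\tt H$ and of (c) by the Sweedler/antipode contraction modulo the relation $(x\De^{(2)}(h))\tt_H 1=\ep(h)\,x\tt_H 1$ are the routine completions the paper omits, and both check out.
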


%%%%%%%%%%%%%%%%%%%%%%%%%%%

%%%%%%%%%%%%%%%%%%%%%%%%%%%%%%%%%%%%%%%%%%%%%%%%%%%%%

%%%%%%%%%%%%%%%%%%%%%%%%%%
%\subsection{Useful identities}\lbb{subidentities} ver si van en appendix.
%%%%%%%%%%%%%%%%%%%%%%%%%%

\vskip 1cm

%%%%%%%%%%%%%%%%%%%%%%%%%%%%%%%%%%%%%%%%%%%%%%%%%%%%%%
\section{Lie $H$-Pseudoalgebras}\lbb{slie*}
%%%%%%%%%%%%%%%%%%%%%%%%%%%%%%%%%%%%%%%%%%%%%%%%%%%%%%

\vskip .5cm

 The notion of conformal algebra \cite{K2} was
generalized by the notion of Lie $H$-pseudoalgebra in \cite{BDK}.
They can be considered as Lie algebras in a certain
``pseudotensor'' category, instead of the category of vector
spaces. A pseudotensor category \cite{BD} is a category equipped
with ``polylinear maps'' and a way to compose them (such
categories were first introduced by Lambek \cite{L} under the name
multicategories). This is enough to define the notions of Lie
algebra, representations, cohomology, etc.

In this section, we shall recall the example of pseudotensor
category that will be used. We follow the exposition in
\cite{BDK}. The proofs of all the statements in this section can
be found in \cite{BDK}. Let $H$ be a cocommutative Hopf algebra
with a comultiplication $\De$. We introduce a pseudotensor
category $\M^*(H)$ whose objects are the same objects as in
$\M^l(H)$ (the category of left $H$-modules), but with a
non-trivial pseudotensor structure \cite{BD}. More precisely, the
space of polylinear maps from $\{L_i\}_i\in I$ to $M$ is defined
by ($L_i,M\in \M^l(H)$, and $I$ a finite non-empty set)
\begin{equation}\lbb{m*d}
\Lin(\{L_i\}_{i\in I}, M) = \Hom_{H^{\tt I}} \left(\ \underset
{i\in I}{\boxtimes}\  L_i\ ,\  H^{\tt I}\tt_H M \ \right),
\end{equation}
where $\boxtimes_{i\in I}$ is the tensor product functor
$\M^l(H)^I \to \M^l(H^{\tt I})$. For any surjection $\pi\colon
J\surjto I$, and  a collection  $\{N_j\}_j\in J$ of objects in
$\M^l(H)$ we define the composition of polylinear maps as follows:

\begin{align}
&\Lin(\{L_i\}_{i\in I}, M)\tt \tt_{i\in I} \Lin(\{N_j\}_{j\in
J_i}, L_i)\longrightarrow \Lin(\{N_j\}_{j\in J}, M) \nonumber\\
\lbb{*com} & \qquad \phi\,\, \times \,\, \{\psi_i\}_{i\in I}
    \longmapsto \phi \circ \{\psi_i\}_{i\in I}
    := \De^{(\pi)}\bigl(\phi\bigr)
\, \circ\bigl(\boxtimes_{i\in I}\psi_i\bigr),
\end{align}
\vskip .2cm

\noindent where $J_i=\pi^{-1}(i)$ for $i\in I$ and  $\De^{(\pi)}$
is the functor $\M^l(H^{\tt I})\to\M^l(H^{\tt J})$, $M\mapsto
H^{\tt J} \tt_{H^{\tt I}} M$, where $H^{\tt I}$ acts on $H^{\tt
J}$ via the iterated comultiplication determined by $\pi$.

Explicitly, let $n_j \in N_j$ $(j\in J)$, and write
\begin{equation}\lbb{compoly1}
\psi_i\Big( \underset{j\in J_i}{\tt} n_j \Big) = \sum_r g_i^r
\tt_H l_i^r, \qquad g_i^r \in H^{\tt J_i}, \; l_i^r \in L_i,
\end{equation}
where, as before, $J_i = \pi^{-1}(i)$ for $i\in I$. Let
\begin{equation}\lbb{compoly2}
\phi\Bigl(  \underset{i\in I}{\tt} l_i^r \Bigr) = \sum_s f^{rs}
\tt_H m^{rs}, \qquad f^{rs} \in H^{\tt I}, \; m^{rs} \in M.
\end{equation}
Then, by definition,
\begin{equation}\lbb{compoly3}
\bigl(\phi\ \circ\ \{\psi_i\}_{i\in I}\bigr) \Bigl( \
\underset{j\in J}{\tt} n_j \Bigr) = \sum_{r,s} \Big(
\underset{i\in I}{\tt} g_i^r \Big) \De^{(\pi)}(f^{rs}) \tt_H
m^{rs},
\end{equation}
where $\De^{(\pi)} \colon H^{\tt I} \to H^{\tt J}$ is the iterated
comultiplication determined by $\pi$. For example, if
$\pi\colon\{1,2,3\}\to\{1,2\}$ is given by $\pi(1)=\pi(2)=1$,
$\pi(3)=2$, then $\De^{(\pi)} = \De\tt\id$; if $\pi(1)=1$,
$\pi(2)=\pi(3)=2$, then $\De^{(\pi)} = \id\tt\De$.

The symmetric group $\symm_I$ acts among the spaces
$\Lin(\{L_i\}_{i\in I}, M)$ by simultaneously permuting the
factors in $\boxtimes_{i\in I} L_i$ and $H^{\tt I}$. This is the
only place where we need the cocommutativity of $H$; for example,
the permutation $\sigma_{12} = (12) \in\symm_2$ acts on $(H\tt
H)\tt_H M$ by
\begin{displaymath}
\si_{12}\bigl( (f\tt g)\tt_H m \bigr) = (g\tt f)\tt_H m,
\end{displaymath}
and this is well defined only when $H$ is cocommutative.

There is a  generalization of  the above construction for
quasitriangular Hopf algebras that will not be used in this
sequel, see Remark 3.4 in \cite{BDK} for details.

%%%%%%%%%%%%%%%%%%%%%%%%%%%

We introduce the following notion.

%%%%%%%%%%%%%%%%%%%%%%%%%%%
\begin{definition}\lbb{dpseu*}
An {\em{$H$-\psalg\/}} (or just a \psalg) is an object $A$ in
$\M^*(H)$ (i.e. a left $H$-module) together with an operation
$\mu\in \Lin(\{A,A\},A)= \Hom_{H\tt H}(A\tt A, (H\tt H)\tt_H A)$,
called the {\em{pseudoproduct}}.
\end{definition}
%%%%%%%%%%%%%%%%%%%%%%%%%%%

Equivalently,  an $H$-pseudoalgebra is a left $H$-module $A$
together with a map
\begin{align*}
& A\tt A \longrightarrow (H\tt H)\tt_H A \\
& a\tt b \longmapsto a*b=\mu(a\tt b)
\end{align*}
satisfying  the following defining property: \vskip .3cm

{\bf $H$-bilinearity:} For $a,b\in A$, $f,g\in H$, one has
\begin{equation}\lbb{bil*2}
fa*gb = ((f\tt g)\tt_H 1) \, (a*b).
\end{equation}

\noindent That is, if
\begin{equation}\lbb{ab*'}
a*b = \dsum_i\, (f_i\tt g_i)\tt_H e_i,
\end{equation}
\noindent then $fa*gb = \tsum_i\, (f f_i\tt g g_i)\tt_H e_i$.

\

%%%%%%%%%%%%%%%%%%%%%%%%%%%
\begin{definition}\lbb{dflie*}
A {\em{Lie $H$-\psalg\/}} (or just a Lie \psalg) is a Lie algebra
$(L,\mu)$ (with $\mu\in \Lin(\{L,L\},L)$) in the pseudotensor
category $\M^*(H)$ as defined above.
\end{definition}
%%%%%%%%%%%%%%%%%%%%%%%%%%%

%%%%%%%%%%%%%%%%%%%%%%%%%%%%%%%%
%
%       composition rules
%
%%%%%%%%%%%%%%%%%%%%%%%%%%%%%%%%

In order to give an  explicit and equivalent definition of a Lie
$H$-pseudoalgebra we need to compute the compositions
$\mu(\mu(\cdot,\cdot),\cdot)$ and $\mu(\cdot,\mu(\cdot,\cdot))$ in
$\M^*(H)$. Let $a*b$ be given by \eqref{ab*'}, and let
\begin{equation}\lbb{abc*1'}
e_i*c = \dsum_{i,j}\, (f_{ij}\tt g_{ij})\tt_H e_{ij}.
\end{equation}
Then $(a*b)*c \equiv \mu(\mu(a\tt b) \tt c)$ is the following
element of $H^{\tt3}\tt_H A$ (cf.\ \eqref{compoly3}):
\begin{equation}\lbb{abc*3'}
(a*b)*c = \dsum_{i,j}\, (f_i \, {f_{ij}}_{(1)}\tt g_i\,
{f_{ij}}_{(2)}\tt g_{ij}) \tt_H e_{ij}.
\end{equation}
Similarly, if we write
\begin{align}
\lbb{abc*4'} b*c &= \dsum_i\, (h_i\tt l_i)\tt_H d_i,
\\
\lbb{abc*5'} a*d_i &= \dsum_{i,j}\, (h_{ij}\tt l_{ij})\tt_H
d_{ij}, \intertext{then} \lbb{abc*6'} a*(b*c) &= \dsum_{i,j}\,
(h_{ij}\tt h_i \,{l_{ij}}_{(1)}\tt l_i\, {l_{ij}}_{(2)}) \tt_H
d_{ij}.
\end{align}

%%%%%%%%%%%%%%%%%%%%%%%%%%%%%%%%%%%%%%%%%%%%%%%%%%%%

Equivalent definition:  a {\it Lie pseudoalgebra}  is a left
$H$-module  $L$ endowed with a map,
\begin{displaymath}
L\otimes L  \longrightarrow (H\tt H)\tt_H L, \qquad a\otimes b
\mapsto [a * b]
\end{displaymath}
called the {\it pseudobracket}, and  satisfying the following
axioms $(a,\, b,\, c\in
 L; f,g\in H)$,

\begin{description}

\item[$H$-bilinearity]
\begin{equation}\lbb{bil*}
[fa*gb] = ((f\tt g)\tt_H 1) \, [a*b].
\end{equation}

\item[Skew-commutativity]
\begin{equation}\lbb{ssym*}
[b*a] = - (\si\tt_H\id) \, [a*b].
\end{equation}
\end{description}
where $\si\colon H\tt H\to H\tt H$ is the permutation $\si(f\tt
g)=g\tt f$. Explicitly, $[b*a] = - \tsum_i\, (g_i\tt f_i)\tt_H
e_i$, if $[a*b]=\sum_i (f_i\tt g_i)\tt_H e_i$ . Note that the
right-hand side of \eqref{ssym*} is well defined due to the
cocommutativity of $H$.

\begin{description}
\item[Jacobi identity]
\begin{equation}\lbb{jac*}
[a*[b*c]] - ((\si\tt\id)\tt_H\id) \, [b*[a*c]] = [[a*b]*c]
\end{equation}
\end{description}
in $H^{\tt3}\tt_H L$, where the compositions $[[a*b]*c]$ and
$[a*[b*c]]$ are defined as above.

\

%%%%%%%%%%%%%%%%%%%%%%%%%%%%%%%%%

One can also define {\em{\as\ $H$-\psalgs\/}} as associative
algebras $(A,\mu)$ in the pseudotensor category $\M^*(H)$. More
precisely,  a pseudoproduct $a*b$ is {\it associative} iff it
satisfies
\begin{description}
\item[Associativity]
\begin{equation}\lbb{assoc*}
a*(b*c) = (a*b)*c
\end{equation}
\end{description}
in $H^{\tt3}\tt_H A$, where the compositions $(a*b)*c$ and
$a*(b*c)$ are given by the above formulas.

\

Similarly, the pseudoproduct $a*b$ is commutative iff it satisfies

\begin{description}
\item[Commutativity]
\begin{equation}\lbb{comm*}
b*a = (\si\tt_H\id) \, (a*b),
\end{equation}

\end{description}

\

%%%%%%%%%%%%%%%%%%%%%%%%%%%

Given $(A,\mu)$  an \as\ $H$-\psalg, one can define a
pseudobracket $\be$ as the commutator
\begin{equation}\label{eq:commutator}
[a*b] = a*b - (\si\tt_H\id) \, (b*a).
\end{equation}
 Then, it is easy to check that $(A,\be)$ is a Lie $H$-\psalg.

%%%%%%%%%%%%%%%%%%%%%%%%%%%

The definitions of representations of Lie \psalgs\ or \as\ \psalgs
\  are obvious modifications of the usual one. For example,

%%%%%%%%%%%%%%%%%%%%%%%%%%%
\begin{definition}\lbb{dreplie*}
A {\em representation\/} of a Lie $H$-\psalg\ $L$ is a left
$H$-module $M$ together with an operation $\rho\in
\Lin(\{L,M\},M)$, that we denote by $a*c \equiv \rho(a\tt c)$,
which satisfies
\begin{equation}\lbb{replie*}
a*(b*c) - ((\si\tt\id)\tt_H\id) \, (b*(a*c)) = [a*b]*c
\end{equation}
for $a,b\in L$, $c\in M$.
\end{definition}
%%%%%%%%%%%%%%%%%%%%%%%%%%%

\vskip .3cm

%%%%%%%%%%%%%%%%%%%%%%%%%%%
%\subsection{Conformal algebras}\lbb{subconfalg}
%%%%%%%%%%%%%%%%%%%%%%%%%%%
\begin{example}The (Lie) conformal algebras introduced by Kac \cite{K2} are
exactly the (Lie) $\Kset[\d]$-\psalgs, where $\Kset[\d]$ is the
Hopf algebra of polynomials in one variable $\d$. The explicit
relation between the $\la$-bracket of \cite{DK} and the
pseudobracket  is:
\begin{displaymath}\lbb{alab}
[a_\la b] = \tsum_i\, p_i(\la)c_i \;\;\iff\;\; [a*b] = \tsum_i\,
(p_i(-\d)\tt1)\tt_{\Kset[\d]} c_i.
%[a,b] = \tsum_i\, p_i(\d)\tt c_i.
\end{displaymath}

Similarly, for $H=\Kset[\d_1,\dots,\d_N]$ we get conformal
algebras in $N$ indeterminates, see \cite[Section~10]{BKV}. We may
say that for $N=0$, $H$ is $\Kset$; then a $\Kset$-conformal
algebra is the same as a Lie algebra.

On the other hand, when $H=\Kset[\Ga]$ is the group algebra of a
group $\Ga$, one obtains the $\Ga$-conformal algebras studied in
\cite{GK}.
\end{example}
%%%%%%%%%%%%%%%%%%%%%%%%%%%
\begin{example}{\it Current \psalgs}.\lbb{subcuralg}
%%%%%%%%%%%%%%%%%%%%%%%%%%%
Let $H'$ be a Hopf subalgebra of $H$, and let $A$ be an
$H'$-\psalg. Then we define the {\em current\/} $H$-\psalg\
$\Cur_{H'}^{H} A \equiv\Cur A$ as ${H}\tt_{H'} A$ by extending the
pseudoproduct $a*b$ of $A$ using the $H$-bilinearity. Explicitly,
for $a,b\in A$ and $f,g\in H$, we define
\begin{displaymath}\lbb{curalg*}
\begin{split}
(f\tt_{H'}a) * (g\tt_{H'}b) &= ((f\tt g)\tt_{H}1) \, (a*b)
\\
&= \tsum_i\, (f f_i \tt g g_i)\tt_{H} (1 \tt_{H'} e_i),
\end{split}
\end{displaymath}
if $a*b = \tsum_i\, (f_i \tt g_i)\tt_{H'} e_i$.
%
%In terms of the bracket \eqref{bracket}, we use the
%$H$-sesqui-linearity \eqref{bil1}, \eqref{bil2} to get:
%\begin{displaymath}\lbb{curalg}
%[f\tt_{H'}a, g\tt_{H'}b]
%= \tsum_i\, f h_i g_{(-1)} \tt (g_{(2)} \tt_{H'} c_i)
%\end{displaymath}
%for $a,b\in A$ with $[a,b] = \tsum_i\, h_i\tt c_i$.
%
Then $\Cur_{H'}^{H} A$ is an $H$-\psalg\ which is Lie or \as\ when
$A$ is so.

An important special case is when $H'=\Kset$: given a Lie algebra
$\g$, let $\Cur\g=H\tt\g$ with the following pseudobracket
\begin{displaymath}\lbb{curalg2}
%\begin{split}
[(f \tt a) * (g \tt b)] = (f\tt g)\tt_{H} (1\tt [a,b]).
%\\
%[f \tt a, g \tt b] = f g_{(-1)} \tt (g_{(2)} \tt [a,b]).
%\end{split}
\end{displaymath}
Then $\Cur\g$ is a Lie $H$-\psalg.
 \end{example}

%%%%%%%%%%%%%%%%%%%%%%%%%%%
%x-products
%%%%%%%%%%%%%%%%%%%%%%%%%%

 Now we will  introduce the notion of  $x$-products in order
  to reformulate the definition
of a Lie (or \as) $H$-\psalg\ in terms of the properties of the
$x$-brackets (or products). In this way, we obtain an algebraic
structure equivalent to that of an $H$-\psalg, that is called an
$H$-\conalg. This formulation is analogous to the $(n)$-products
in the setting of conformal algebras \cite{K3}, and we shall use
it in the following section.

Let $(L,[\,*\,])$ be a Lie $H$-\psalg. Recall the Fourier
transform $\F$, defined by \eqref{ftrans}:
\begin{displaymath}
\F(f\tt g) = f g_{(-1)} \tt g_{(2)} ,
\end{displaymath}
and the identity \eqref{sigfour}:
\begin{displaymath}
f\tt g = (f g_{(-1)} \tt 1) \, \De(g_{(2)}).
\end{displaymath}
Using them, for any $a,b\in L$, we have that $[a*b] = \tsum_i\,
(f_i\tt g_i) \tt_H e_i$ can be rewritten as
\begin{equation}\lbb{psbr2}
[a*b] = \dsum_i\, (f_i {g_i}_{(-1)}  \tt 1) \tt_H {g_i}_{(2)} e_i.
\end{equation}
Hence $[a*b]$ can be written uniquely in the form $\tsum_i\, (h_i
\tt 1) \tt_H c_i$, where $\{h_i\}$ is a fixed $\Kset$-basis of $H$
(cf.\ \leref{lhhh}).

Now, we introduce another bracket $[a,b]\in H\tt L$ defined as the
Fourier transform of $[a*b]$:
\begin{equation*}\lbb{bracket}
%\begin{split}
[a,b] = \dsum_i\, \F(f_i\tt g_i) \, (1\tt e_i) = \dsum_i\, f_i \,
{g_i}_{(-1)} \tt {g_i}_{(2)} e_i.
%\\
%\text{if}\quad [a*b] &= \tsum_i\, (f_i\tt g_i) \tt_H e_i.
%\end{split}
\end{equation*}
%(Note that $[a,b]$ is well defined because of \eqref{propf0}.)
That is,
\begin{equation}\lbb{ab}
[a,b] = \tsum_i\, h_i\tt c_i \quad\text{if}\quad [a*b] = \tsum_i\,
(h_i\tt1)\tt_H c_i.
\end{equation}
\vskip .3cm

\noindent Then for $x\in X=H^*$,  we define the {\em
$x$-bracket\/} in $L$ as follows:
\begin{align}\lbb{xbracket}
%\begin{split}
\xb{a}{x}{b}
%&= \tsum_i\, \< x f_i, {g_i}_{(1)} \> \, {g_i}_{(2)} e_i
%\\
: &= (\langle S(x),\cdot\rangle\tt\id) \, [a,b] \\&= \dsum_i\, \<
S(x), f_i \,{g_i}_{(-1)} \> \, {g_i}_{(2)} \, e_i= \dsum_i\,
\langle S(x), h_i \rangle c_i,\nonumber
%\end{split}
\end{align}
if $ [a*b] = \sum_i\, (f_i\tt g_i) \tt_H e_i= \tsum_i\,
(h_i\tt1)\tt_H c_i$.

\

Using properties of the Fourier transform, it is straightforward
to derive the properties of the bracket \eqref{ab}. Then the
definition of a Lie \psalg\ can be equivalently reformulated as
follows.

%%%%%%%%%%%%%%%%%%%%%%%%%%%
\begin{definition}\lbb{dhconf1}
A {\em Lie $H$-conformal algebra\/} is a left $H$-module $L$
equipped with a bracket $[\cdot,\cdot]\colon L\tt L\to H\tt L$,
satisfying the following properties ($a,b,c\in L$, $h\in H$):
\begin{description}

\item[$H$-sesqui-linearity]
\begin{align*}
%\lbb{bil1}
[ha,b] &= (h\tt1) \, [a,b],
\\
%\lbb{bil2}
[a,hb] &= (1\tt h_{(2)}) \, [a,b] \, (h_{(-1)}\tt1).
\end{align*}

\item[Skew-commutativity]
If $[a,b]$ is given by \eqref{ab}, then
\begin{equation}\lbb{ss}
[b,a] = - \dsum_i\, {h_i}_{(-1)} \tt {h_i}_{(2)} c_i.
\end{equation}

\item[Jacobi identity]
\begin{equation}\lbb{jacid}
[a,[b,c]] - (\si\tt\id) \, [b,[a,c]] = (\F^{-1}\tt\id) \,
[[a,b],c]
\end{equation}
\end{description}
\vskip .2cm
in $H\tt H\tt L$, where $\si\colon H\tt H\to H\tt H$ is the
permutation $\si(f\tt g)=g\tt f$, and
\begin{align*}
%\lbb{abc1}
[a,[b,c]] &= (\si\tt\id) \, (\id\tt[a,\cdot]) \, [b,c],
\\
%\lbb{abc2}
[[a,b],c] &= (\id\tt[\cdot,c]) \, [a,b].
\end{align*}

%This completes the definition of an $H$-conformal algebra.
\end{definition}

%%%%%%%%%%%%%%%%%%%%%%%%%%%

One can also reformulate \deref{dhconf1} in terms of the
$x$-brackets \eqref{xbracket}.

%%%%%%%%%%%%%%%%%%%%%%%%%%%
\begin{definition}\lbb{dhconf2}
A {\em Lie $H$-conformal algebra\/} is a left $H$-module $L$
equipped with $x$-brackets $\xb axb \in L$ for $a,b\in L$, $x\in
X$, satisfying the following properties:
\begin{description}

\item[Locality]
\begin{equation}\lbb{codimx}
\codim\{ x\in X \st \xb{a}{x}{b} = 0\} < \infty \quad\text{for any
$a,b\in L$}.
\end{equation}

\end{description}
Equivalently, for any basis $\{x_i\}$ of $X$,
\begin{equation*}\lbb{loc2}
\xb{a}{x_i}{b} \ne 0 \quad\text{for only a finite number of $i$.}
\end{equation*}

\begin{description}
\item[$H$-sesqui-linearity]
\begin{align}
\lbb{bl1} \xb{ha}{x}{b} &= \xb{a}{xh}{b},
\\
\lbb{bl2} \xb{a}{x}{hb} &= h_{(2)} \, \xb{a}{h_{(-1)} x}{b}.
\end{align}

\item[Skew-commutativity]
Choose dual bases $\{h_i\}$, $\{x_i\}$ in $H$ and $X$. Then:
\begin{equation}\lbb{sksy}
\xb{a}{x}{b} = -\dsum_i \langle x, {h_i}_{(-1)} \rangle \,
{h_i}_{(-2)} \, \xb{b}{x_i}{a}.
\end{equation}

\item[Jacobi identity]
\begin{equation}\lbb{jid}
\xb{a}{x}{\xb{b}{y}{c}} - \xb{b}{y}{\xb{a}{x}{c}} = [[a_{x_{(2)}}
b]_{y x_{(1)}} c].
\end{equation}

\end{description}

%This completes the definition.
\end{definition}

%%%%%%%%%%%%%%%%%%%%%%%%%
%end x-products
%%%%%%%%%%%%%%%%%%%%%%%%%%%%

 \vskip 1cm

We need the following important notions (see Section 10 in
\cite{BDK}).

 %%%%%%%%%%%%%%%%%%%%%%%%%%%%
 %definition of chom
 %%%%%%%%%%%%%%%%%%%%%%%%%%%

\begin{definition}\lbb{dchom}
Let $V$ and $W$ be two $H$-modules. An {\em $H$-pseudolinear
map\/} from $V$ to $W$ is a $\Kset$-linear map $\phi\colon V\to
(H\tt H)\tt_H W$ such that
\begin{align}
\lbb{hclm} \phi(hv) &= ((1\tt h)\tt_H 1) \, \phi(v), \qquad h\in
H, v\in V. \intertext{We denote the vector space of all such
$\phi$ by $\Chom(V,W)$. There is a left action of $H$ on
$\Chom(V,W)$ defined by:} \lbb{hactchom} (h\phi)(v) &=
((h\tt1)\tt_H 1)\, \phi(v).
\end{align}
In the special case $V=W$, we let $\Cend V=\Chom(V,V)$.
\end{definition}
%%%%%%%%%%%%%%%%%%%%%%%%%%%

For example, let $A$ be an $H$-\psalg\ and $V$ be an $A$-module.
Then for any $a\in A$ the map $m_a\colon V\to (H\tt H)\tt_H V$
defined by $m_a(v) = a*v$ is an $H$-pseudolinear map. Moreover, we
have $hm_a = m_{ha}$ for $h\in H$.

\begin{remark} \label{**} Consider the map $\rho\colon \Chom(V,W) \tt V \to (H\tt
H)\tt_H W$ given by $\rho(\phi\tt v) = \phi(v)$. By definition it
is $H$-bilinear,
therefore it is a polylinear map % $\rho\in\Lin( \{ \Chom(V,W), V \}, W)$
in $\M^*(H)$. Sometimes, we will  use the notation $\phi * v :=
\phi(v)$ and consider this as a pseudoproduct or {\it
pseudoaction}.
\end{remark}

The associated $x$-products are called {\em Fourier
coefficients\/} of $\phi$ and can be written by a formula
analogous to \eqref{xbracket}:
\begin{equation}\lbb{fourcoe}
%\begin{split}
\phi_x v = \dsum_i\, \< S(x), f_i {g_i}_{(-1)} \> \, {g_i}_{(2)}
w_i, \qquad \text{if}\quad \phi(v) = \dsum_i\, (f_i\tt g_i) \tt_H
w_i.
%\end{split}
\end{equation}
\begin{remark}\label{www}
 Observe that they satisfy a locality relation and an
$H$-sesqui-linearity relation similar to \eqref{codimx} and
\eqref{bl2}:
\begin{equation}\lbb{codimphix}
\codim\{ x\in X \st \phi_x v = 0\} < \infty \quad\text{for
any}\;\; v\in V,
\end{equation}
\begin{equation}\lbb{bl2phix}
\phi_x (hv) = h_{(2)} ( \phi_{h_{(-1)} x} v ).
\end{equation}
Conversely, any collection of maps $\phi_x\in\Hom(V,W)$, $x\in X$,
satisfying relations \eqref{codimphix}, \eqref{bl2phix} comes from
an $H$-pseudolinear map $\phi\in\Chom(V,W)$. Explicitly
\begin{equation*}\lbb{fourcoe2}
\phi(v) = \dsum_i\, ( S(h_i) \tt1)\tt_H \phi_{x_i} v,
\end{equation*}
where $\{h_i\}$, $\{x_i\}$ are dual bases in $H$ and $X$.
\end{remark}

%%%%%%%%%%%%%%%%%%%%%%%%%%%%%%%%%%%%%%%%%%%%%%%%%%%%%%
\begin{comment}

Let $\F^*\colon X{\tt}X \to X\widehat{\tt}X$ be the dual of $\F$,
defined by
\begin{equation}\lbb{f*}
\langle x\tt y, \F(f\tt g) \rangle = \langle \F^*(x\tt y), f\tt
g\rangle.
\end{equation}
Then $\F^*$ is given by the formula
\begin{align}
\lbb{f*1} \F^*(x\tt y) &= x_{(1)} \tt x_{(-2)} y,
\\
\intertext{and similarly,} \lbb{f*2} (\F^{-1})^*(x\tt y) &=
x_{(1)} \tt x_{(2)} y.
\end{align}

The following properties of $\F^*$ are dual to the ones of $\F$:
\begin{align}
\lbb{propf*0} \F^*(x\tt y) \, \De(h) &= \F^*(x\tt yh),
\\
\lbb{propf*1} (h\tt1) \, \F^*(x\tt y) &= \F^*(hx\tt y),
\\
\lbb{propf*2} (1\tt h) \, \F^*(x\tt y) &= \F^*(x h_{(-1)}\tt
h_{(2)}y).
\end{align}

\end{comment}
%%%%%%%%%%%%%%%%%%%%%%%%%%%%%%%%%%%%%%%%%%%%%%%%%%%%%%

%%%%%%%%%%%%%%%%%%%%%%%%%%%

Given $U,V,W$ three $H$-modules, and assuming  that $U$ is finite
(i.e. finitely generated as an $H$-module), there is a unique
polylinear map (see Lemma 10.1 \cite{BDK}).
\begin{displaymath}
\mu\in\Lin( \{ \Chom(V,W), \Chom(U,V)\}, \Chom(U,W))
\end{displaymath}
in $\M^*(H)$, denoted as $\mu(\phi\tt\psi) = \phi*\psi$, such that
\begin{equation}\lbb{phipsiv}
(\phi*\psi)*u = \phi*(\psi*u)
\end{equation}
in $H^{\tt3}\tt_H W$ for $\phi\in\Chom(V,W)$, $\psi\in\Chom(U,V)$,
$u\in U$. More precisely, $\phi*\psi$ is given in terms of the
$x$-products $\phi_x\psi$ by the following formulas
\begin{equation}\lbb{xpcend} (\xp \phi x \psi)_y(v) =
\phi_{x_{(2)}}(\psi_{y x_{(-1)}}(v))=\dsum_i
\phi_{x_{i}}(\psi_{y(h_iS(x))}(v)).
\end{equation}

%%%%%%%%%%%%%%%%%%%%%%%%%%%

%%%%%%%%%%%%%%%%%%%%%%%%%%%%%%%%%%%%%%%%%%%%%%%%%%%%%%

%%%%%%%%%%%%%%%%%%%%%%%%%%%%%%%%%%%%%%%%%%%%%%%%%%%%%%

In the special case $U=V=W$ (finite), we obtain a pseudoproduct
$\mu$ on $\Cend V$, and an action $\rho$ of $\Cend V$ on $V$. More
precisely, for any finite $H$-module $V$, the above pseudoproduct
provides $\Cend V$ with the structure of an \as\ $H$-\psalg\  and
$V$ has a natural structure of a $\Cend V$-module given by $\phi*v
\equiv\phi(v)$.

Moreover, for an \as\ $H$-\psalg\ $A$, giving a structure of an
$A$-module on $V$ is equivalent to giving a homomorphism of \as\
$H$-\psalgs\ from $A$ to $\Cend V$.

%%%%%%%%%%%%%%%%%%%%%%%%%%%

 Let $\gc V$ be the Lie
$H$-\psalg\ obtained from the \as\ one $\Cend V$ by the
construction given by (\ref{eq:commutator}). Then $V$ is a $\gc
V$-module. In general, for a Lie $H$-\psalg\ $L$,  giving a
structure of an $L$-module on a finite $H$-module $V$ is
equivalent to giving a homomorphism of Lie $H$-\psalgs\ from $L$
to $\gc V$.

If $V$ is a free $H$-module of finite rank, one can give an
explicit description  of $\gc V$, as follows.

%%%%%%%%%%%%%%%%%%%%%%%%%%%
\begin{proposition}(Proposition 10.3 \cite{BDK})\lbb{pcendexpl}
Suppose that  $V=H\tt V_0$, where $H$ acts trivially on $V_0$ and
$\dim V_0 < \infty$. Then $\gc V$ is isomorphic to $H\tt H\tt\End
V_0$, where $H$ acts by  left multiplication on the first factor,
and  the  pseudobracket in $\gc V$ is given by{\rm:}
\begin{equation}\lbb{brackgc}
\begin{split}
[(f\tt a\tt A)*(g\tt b\tt B)] &= (f \tt g a_{(1)}) \tt_H (1 \tt b
a_{(2)} \tt AB)
\\
&- (f b_{(1)} \tt g) \tt_H (1 \tt a b_{(2)} \tt BA).
\end{split}
\end{equation}
\end{proposition}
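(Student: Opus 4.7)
The strategy has three parts: (i) identify $\Cend V$ as an $H$-module with $H \tt H \tt \End V_0$; (ii) compute the associative pseudoproduct in $\Cend V$ from the defining identity \eqref{phipsiv}; and (iii) antisymmetrize via \eqref{eq:commutator} to obtain the pseudobracket in $\gc V$.

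For (i), any $\phi \in \Cend V = \Chom(V,V)$ is determined by its restriction to $1 \tt V_0 \subset V = H \tt V_0$, thanks to the $H$-linearity \eqref{hclm}. Since the right $H$-action on $H\tt H$ is via $\De$, every element of $(H \tt H) \tt_H V$ has a unique presentation of the form $\sum_i (p_i \tt q_i) \tt_H (1 \tt v_{0,i})$ (for $\{v_{0,i}\}$ a basis of $V_0$), which gives an isomorphism $(H\tt H)\tt_H V \cong H \tt H \tt V_0$. Since $V_0$ is finite-dimensional, $\phi$ thus corresponds to an element of $H \tt H \tt \End V_0$; under this identification, $f \tt a \tt A$ is the pseudolinear map
\[
\phi(1 \tt v_0) = (f \tt a) \tt_H (1 \tt A(v_0)), \qquad \phi(h \tt v_0) = (f \tt ha) \tt_H (1 \tt A(v_0)),
\]
and the $H$-action \eqref{hactchom} becomes left multiplication on the first tensor factor.

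For (ii), fix $u = 1 \tt w_0$, $w_0 \in V_0$, and exploit \eqref{phipsiv} to determine $\phi * \psi$ from $\phi * (\psi * u)$. With $\psi = g \tt b \tt B$,
\[
\psi(u) = (g \tt b) \tt_H (1 \tt B(w_0)), \qquad \phi(1 \tt B(w_0)) = (f \tt a) \tt_H (1 \tt AB(w_0)).
\]
The composition formula \eqref{compoly3} applied with the surjection $\pi(1)=1,\ \pi(2)=\pi(3)=2$ (for which $\De^{(\pi)} = \id \tt \De$) yields
\[
\phi * (\psi * u) = (f \tt g a_{(1)} \tt b a_{(2)}) \tt_H (1 \tt AB(w_0)) \in H^{\tt 3}\tt_H V.
\]
A direct verification with the opposite surjection $\pi'(1)=\pi'(2)=1,\ \pi'(3)=2$ (for which $\De^{(\pi')} = \De \tt \id$) shows that the candidate element $(f \tt g a_{(1)}) \tt_H (1 \tt b a_{(2)} \tt AB) \in (H\tt H)\tt_H \Cend V$, acted on $u$ via the pseudoaction, produces the same right-hand side; together with the uniqueness in \leref{lhhh}(b) applied once in $(H\tt H)\tt_H\Cend V$ and once in step (i), this identifies $\phi * \psi$ uniquely.

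For (iii), substitute into \eqref{eq:commutator}: $[\phi * \psi] = \phi * \psi - (\si \tt_H \id)(\psi * \phi)$. Exchanging $(f, a, A) \leftrightarrow (g, b, B)$ in (ii) gives $\psi * \phi = (g \tt f b_{(1)}) \tt_H (1 \tt a b_{(2)} \tt BA)$, whose image under $\si \tt_H \id$ is $(f b_{(1)} \tt g) \tt_H (1 \tt a b_{(2)} \tt BA)$; subtracting produces \eqref{brackgc} exactly. The main obstacle is the bookkeeping of the iterated comultiplications in step (ii), where one must correctly apply $\De^{(\pi)}$ and then use the uniqueness \leref{lhhh}(b) together with the identification from step (i) to extract $\phi * \psi \in (H\tt H)\tt_H \Cend V$ from its image in $H^{\tt 3}\tt_H V$ under the pseudoaction.
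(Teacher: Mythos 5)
The paper does not prove this proposition: it is quoted verbatim from \cite{BDK} (Proposition 10.3 there), so there is no in-paper argument to compare against. Your reconstruction is correct and is the standard one: the identification $(H\tt H)\tt_H V\cong H\tt H\tt V_0$ for the free module $V=H\tt V_0$, the computation of the associative pseudoproduct on $\Cend V$ from \eqref{phipsiv} using the composition rules \eqref{abc*3'} and \eqref{abc*6'} (both of which indeed give $(f\tt g a_{(1)}\tt b a_{(2)})\tt_H(1\tt ABw_0)$), and the antisymmetrization \eqref{eq:commutator} reproduce \eqref{brackgc} exactly. The one point you treat a bit loosely is uniqueness: the clean justification is the existence-and-uniqueness statement for the pseudoproduct on $\Chom$ (Lemma 10.1 of \cite{BDK}, quoted in the paper just before the proposition), applied after noting that your candidate formula is $H$-bilinear in $\phi$ and $\psi$ and that checking \eqref{phipsiv} on the $H$-generators $1\tt w_0$ of $V$ suffices; the appeal to \leref{lhhh}(b) alone does not show that an element $Z\in(H\tt H)\tt_H\Cend V$ with $Z*u=0$ for all $u$ must vanish. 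This is a minor presentational issue, not a gap in the argument.
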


%%%%%%%%%%%%%%%%%%%%%%%%%%%

When $V=H\tt\Kset^n$, we will denote $\gc V$ by $\gc_n$.

%%%%%%%%%%%%%%%%%%%%%%%%%%%
\begin{remark}\lbb{ractchom}
Given a   Lie $H$-\psalg\  $L$, and $U,V$  finite $L$-modules, the
formula ($a\in L$, $u\in U$, $\phi\in\Chom(U,V)$)
\begin{equation}\lbb{actchom}
(a*\phi)(u) = a*(\phi*u) - ((\si\tt\id)\tt_H\id) \, \phi*(a*u)
\end{equation}
provides $\Chom(U,V)$ with the structure of an $L$-module. In
particular when $V$ is the base field $\mathbf{k}$, we have the
following
\end{remark}
%%%%%%%%%%%%%%%%%%%%%%%%%%%
\begin{definition} Given a  finite module $M$ over a Lie pseudoalgebra
$L$, define de {\it (pseudo-)dual module} of $M$ as
\begin{equation}
              M^*=\Chom (M,\mathbf{k}),
\end{equation}
where $\mathbf{k}$ is a trivial $L$-module with $h\cdot
1=\epsilon(h) 1$ for all $h\in H$.
\end{definition}

%%%%%%%%%%%%%%%%%%%%%%%%%%%

%%%%%%%%%%%%%%%%%%%%%%%%%%

 \vskip 1cm

%%%%%%%%%%%%%%%%%%%%%
%Section bialgebras and coalgebras
%%%%%%%%%%%%%%%%%%%%%%%%%%%%%%

\section{ Duality and Pseudo-bialgebras}\label{sec:pseudobialg}

\vskip .5cm

In this section we extend some results for Lie conformal
 algebras obtained in \cite{BKL} and \cite{L}.
  Let $( L ,[\ \ *\ \ ])$ be a Lie % {\rm(}or \as{\rm)}
 \psalg, and $M$ and $N$ be $L$-modules. We  endow  the ordinary
tensor product of the underlying $H$-modules with an $L$-module
structure. Recall that the action of $H$ in $M\otimes N$ is given
by the coproduct,
 namely, if $h\in H$, $m\in M$ and $ n\in N$:
$$
h\cdot (m\otimes n)\,=\, \Delta(h)( m\otimes n) =h_{(1)} m\otimes
h_{(2)} n.
$$

%%%%%%%%%%%%%%%%%
\begin{lemma}  The $H$-module $M\otimes N$ is an
$L$-module with the following action: ($a\in L$, $m\in M$ and
$n\in N$)

\begin{equation}\label{eq:tensor}
a*(m\otimes n)=\sum_i (f_i {g_{i}}_{(-1)}\otimes 1)\otimes_H (
{g_{i}}_{(2)}m_i\otimes n)+ \sum_j (h_j {l_{j}}_{(-1)}\otimes
1)\otimes_H (m\otimes {l_{j}}_{(2)}n_j),
\end{equation}
if
\begin{equation}\label{aaam}
a*m= \sum_i (f_i\otimes g_i)\otimes_H m_i= (f_i
{g_{i}}_{(-1)}\otimes 1)\otimes_H {g_{i}}_{(2)}m_i \in\, (H\otimes
H)\otimes_H M ,
\end{equation}
and
$$a*n= \sum_j (h_j\otimes l_j)\otimes_H n_j=\sum_j (h_j {l_{j}}_{(-1)}\otimes
1)\otimes_H  {l_{j}}_{(2)}n_j\in\, (H\otimes H)\otimes_H N.
$$

\end{lemma}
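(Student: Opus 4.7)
The plan is to verify the $L$-module axioms for $M\otimes N$ (\deref{dreplie*}) by translating the defining pseudoaction into the equivalent $x$-bracket formulation of Remark~\ref{www}. The key observation is that the defining formula has been arranged so that each summand has trivial second tensor factor, and reading off Fourier coefficients via \eqref{fourcoe} produces the clean Leibniz identity
\[
a_x(m\otimes n) = (a_x m)\otimes n + m\otimes (a_x n), \qquad a\in L,\ m\in M,\ n\in N,\ x\in X.
\]
Indeed, using \eqref{sigfour} one rewrites $a*m$ in the form $\sum_i (f_i {g_i}_{(-1)}\otimes 1)\otimes_H {g_i}_{(2)} m_i$, so the two summands of the defining formula are recognized as the images of ``$(a*m)\otimes n$'' and ``$m\otimes(a*n)$'' in $(H\otimes H)\otimes_H(M\otimes N)$, and \eqref{fourcoe} extracts their $x$-brackets as displayed.

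Granting the Leibniz form, $H$-sesquilinearity \eqref{bl1}--\eqref{bl2} for $M\otimes N$ (with the diagonal action via $\Delta$) follows from the corresponding identities for $a_x m$ and $a_x n$ together with coassociativity and the antipode axioms; locality of $\{a_x(m\otimes n)\}$ is immediate from locality of $\{a_x m\}$ and $\{a_x n\}$. Thus the formula defines a well-defined $H$-pseudolinear map $L\otimes(M\otimes N)\to(H\otimes H)\otimes_H(M\otimes N)$ via the correspondence of Remark~\ref{www}.

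For the module Jacobi identity I work in the $x$-bracket form \eqref{jid},
\[
a_x(b_y v) - b_y(a_x v) = [a_{x_{(2)}} b]_{y x_{(1)}}\, v.
\]
Setting $v = m\otimes n$ and applying Leibniz twice yields
\begin{align*}
a_x(b_y(m\otimes n)) &= (a_x b_y m)\otimes n + (b_y m)\otimes (a_x n)\\
&\quad + (a_x m)\otimes (b_y n) + m\otimes (a_x b_y n),
\end{align*}
and an analogous expansion for $b_y(a_x(m\otimes n))$. The two cross terms cancel upon subtraction, leaving $([a_x,b_y]m)\otimes n + m\otimes ([a_x,b_y]n)$. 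The Jacobi identities for the $L$-modules $M$ and $N$ individually convert this to $([a_{x_{(2)}} b]_{y x_{(1)}} m)\otimes n + m\otimes ([a_{x_{(2)}} b]_{y x_{(1)}} n)$, and one final application of Leibniz recognizes this as $[a_{x_{(2)}} b]_{y x_{(1)}}(m\otimes n)$, as required.

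The main obstacle is the bookkeeping in the reduction to the Leibniz form: one must verify, using $(f h_{(1)}\otimes g h_{(2)})\otimes_H v = (f\otimes g)\otimes_H hv$ and \eqref{sigfour}, that both summands of the defining formula indeed correspond to the ``partial'' pseudoactions on $M$ and $N$ respectively when transported into $(H\otimes H)\otimes_H(M\otimes N)$. Once this translation is secured, the remaining steps are formal Leibniz-bracket manipulations that mirror the classical Lie-algebra argument for tensor products of modules.
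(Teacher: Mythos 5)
Your argument is correct, but it takes a genuinely different route from the paper's. The paper stays entirely inside the pseudoproduct formalism: it normalizes the action to the canonical form $\sum_k(h_k\otimes 1)\otimes_H m_k$, checks $H$-bilinearity by an explicit computation with \eqref{sigfour}, introduces the compressed notation $a*m=(h^{a,m}\otimes 1)\otimes_H m_a$, and then expands both sides of \eqref{replie*} with the composition rules \eqref{abc*3'} and \eqref{abc*6'}, matching the resulting terms pairwise. You instead pass to Fourier coefficients: since both summands of \eqref{eq:tensor} have trivial second tensor factor, \eqref{fourcoe} immediately turns the definition into the Leibniz rule $a_x(m\otimes n)=(a_xm)\otimes n+m\otimes(a_xn)$, after which sesquilinearity, locality and the Jacobi identity reduce to the classical tensor-product-of-modules computation with the cross terms cancelling. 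Your route buys conceptual transparency and eliminates almost all of the bookkeeping that occupies the paper's proof. What it costs is a dependence on the equivalence, for \emph{modules}, between a pseudoaction and a family of $x$-actions satisfying locality, sesquilinearity and the $x$-form of \eqref{replie*}; the paper records this equivalence only for the algebra structure (\deref{dhconf2}) and for single pseudolinear maps (\reref{www}), so you should either cite the module version from \cite{BDK} or remark that the passage to Fourier coefficients is injective (dual bases, \eqref{AAA}), so that \eqref{replie*} in $H^{\otimes 3}\otimes_H(M\otimes N)$ is recovered from the identity of all its $x,y$-coefficients. You should also say a word about the sum over $x_{(1)}\otimes x_{(2)}\in X\widehat{\otimes}X$ on the right-hand side of the $x$-Jacobi identity, which is a priori infinite and is tamed by locality exactly as in the paper's proof of \thref{duality-thm}(b). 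These are citation-level repairs rather than gaps in the mathematics.
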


\begin{proof} Observe that Lemma \ref{lhhh} and the arguments that
produced (\ref{psbr2}) show that the action $a*m$ ($a\in L, m\in
M$) in (\ref{aaam}) can be written
\begin{equation}\label{a*m}
a*m= \sum_i (f_i {g_i}_{(-1)}\otimes 1)\otimes_H  {g_i}_{(2)}m_k
\in\, (H\otimes H)\otimes_H M ,
\end{equation}
hence, the action $a*m$ ($a\in L, m\in M$) can be written uniquely
in the form $\dsum_i (h_i\otimes 1)\otimes_H c_i$, where $\{h_i\}$
is a fixed $\mathbf{k}$-basis of $H$. In this way,
\begin{equation}\label{eq:tensorh}
a*(m\otimes n)=\sum_k (h_k\otimes 1)\otimes_H ( m_k\otimes n)+
\sum_l (h'_l\otimes 1)\otimes_H (m\otimes n_l),
\end{equation}
if
\begin{equation*}
a*m= \sum_k (h_k\otimes 1)\otimes_H  m_k \in\, (H\otimes
H)\otimes_H M ,
\end{equation*}
and
$$a*n= \sum_l (h'_l\otimes 1)\otimes_H n_l\in\, (H\otimes
H)\otimes_H N.
$$
First of all we have to show the H-bilinearity of the action
defined in (\ref{eq:tensor}).  Observe that, in general, using
(\ref{sigfour}) and the $H$-linearity of $M$, we have ($f,g\in H,
a\in L$ and $m\in M$)
\begin{align*}
f a*g  m \ &=  ((f\otimes g)\otimes_H 1)(a*m)=
\sum_k (fh_k\otimes g)\otimes_H  m_k\nonumber\\
\,&=  \sum_k (fh_kg_{(-1)}\otimes 1)\otimes_H  g_{(2)}m_k.
\end{align*}
Therefore,  using the cocommutativity of $H$, (\ref{sigfour}) and
(\ref{eq:tensorh}),
\begin{align}
fa*g(m\otimes n)\,  &=  fa*(g_{(1)}m\otimes g_{(2)}n)\nonumber\\
\,&= \sum_k (fh_kg_{(-1)}\otimes 1)\otimes_H
(g_{(2)}m_k \otimes g_{(3)}n)\nonumber\\
\, &\, \qquad \qquad\qquad + \sum_ l (fh'_lg_{(-2)}\otimes
1)\otimes_H (g_{(1)}m
\otimes g_{(3)}n_l),\nonumber\\
\,&=\sum_k (fh_kg_{(-1)}\otimes 1)\Delta(g_{(2)})\otimes_H
(m_k \otimes n)\nonumber\\
\, &\, \qquad \qquad \qquad + \sum_ l (fh'_lg_{(-1)}\otimes
1)\Delta(g_{(2)})\otimes_H (m \otimes n_l),\nonumber\\
\,&= \sum_k (fh_k\otimes g)\otimes_H ( m_k\otimes n)+ \sum_l
(fh'_l\otimes g)\otimes_H (m\otimes n_l)\nonumber\\
\,&= ((f\otimes g)\otimes_H 1) (a*(m\otimes n)),
\end{align}
proving the $H$-bilinearity.

To prove that $M\otimes N$ is an $L$-module we will introduce the
following notation that simplify (\ref{a*m}): for $a\in L$ and
$m\in M$, we denote
\begin{equation}\label{notation}
a*m=\sum_{(a,m)}(h^{a,m}\otimes 1)\otimes_H m_a=(h^{a,m}\otimes
1)\otimes_H m_a.
\end{equation}
where we avoided the sum that is implicitly understood. With this
notation, we have that (\ref{eq:tensorh}) can be rewritten as
 \begin{eqnarray}\label{eq:tensnew}
 a*(m\otimes n)= (h^{a,m}\otimes 1)\otimes_H(m_a\otimes n)+
 (h^{a,n}\otimes 1)\otimes_H(m\otimes
 n_a).
 \end{eqnarray}
Thus, by the composition rule
\begin{align} \label{eq:mod1}
a*(b*(m\otimes n))\, &=  (h^{a,m_b}\otimes h^{b,m}\otimes
1)\otimes_H((m_b)_a\otimes n)\nonumber\\
&\ \ \ + ( h^{a,n}\otimes h^{b,m}\otimes
1)\otimes_H(m_b\otimes n_a)\\
&\ \ \ +  (h^{a,m}\otimes h^{b,n}\otimes
1)\otimes_H(m_a\otimes n_b)\nonumber\\
&\ \ \ +  (h^{a,n_b}\otimes h^{b,n}\otimes 1)\otimes_H(m\otimes
(n_b)_a).\nonumber
\end{align}
Similarly, interchanging the roles of $a$ and $b$ in
(\ref{eq:mod1}),
\begin{align*}
((\si\tt\id)\tt_H\id)(b*(a*(m\otimes n))\ &=  (h^{a,m}\otimes
h^{b,m_a}\otimes
1)\otimes_H((m_a)_b\otimes n)\nonumber\\
&\ \ \ +  (h^{a,m}\otimes h^{b,n}\otimes
1)\otimes_H(m_a\otimes n_b)\nonumber\\
&\ \ \ +  (h^{a,n}\otimes h^{b,m}\otimes
1)\otimes_H(m_b\otimes n_a)\nonumber\\
&\ \ \ +  (h^{a,n}\otimes h^{b,n_a}\otimes 1)\otimes_H(m\otimes
(n_a)_b).
\end{align*}

\noindent Then,
\begin{align}\label{eq:modlhs}
 & a*(b*(m\otimes n)) - ((\si\tt\id)\tt_H\id)(b*(a*(m\otimes n))=\\
 &\, \qquad (h^{a,m_b}\otimes h^{b,m}\otimes
1)\otimes_H((m_b)_a\otimes n) - (h^{a,m}\otimes h^{b,m_a}\otimes
1)\otimes_H((m_a)_b\otimes n)\nonumber\\
&\,\qquad  + (h^{a,n_b}\otimes h^{b,n}\otimes 1)\otimes_H(m\otimes
(n_b)_a) - (h^{a,n}\otimes h^{b,n_a}\otimes 1)\otimes_H(m\otimes
(n_a)_b).\nonumber
\end{align}
On the other hand, if $[a*b]=\sum (h_i\otimes 1)\otimes_He_i$, by
the composition rules (\ref{abc*3'}),
\begin{align} \label{eq:modrhs}
[a*b]*(m\otimes n)= & \ (h_i(h^{e_i,m})_{(1)}\otimes
(h^{e_i,m})_{(2)}\otimes 1)\tt_H (m_{e_i}\otimes n)\nonumber\\
&+  (h_i(h^{e_i,n})_{(1)}\otimes (h^{e_i,n})_{(2)}\otimes 1)\tt_H
(m\otimes n _{e_i}).
\end{align}
 Now, using the fact that  $M$ and $N$ are $L$ modules themselves, it is
 immediate to check that the first summand in (\ref{eq:modrhs}) corresponds
 exactly with the first two in  (\ref{eq:modlhs}), and similarly with
 the remaining ones, finishing the proof.
\end{proof}
%%%%%%%%%%%%%%%%%%%%%%%%%%%%%%

Let $M$ be an $L$-module. Using  the arguments that produced
(\ref{psbr2}) and Lemma \ref{lhhh}, observe that if $f\in M^*$ and
$m\in M$, then $f(m)$ can be uniquely written as $(g_{f,m}\tt
1)\tt_H 1$. We have the following useful result (cf. Proposition
6.1 in \cite{BKL}).

%%%%%%%%%%%%%%%%
\begin{proposition}\label{prop:ddd} Let $M$ and $N$ be two $L$-modules.
Suppose that $M$ has  finite rank as an $H$-module.
 Then $M^{*}\otimes N\simeq $ Chom$(M,N)$ as $L$-modules, where
 the correspondence
 $\phi: M^{*}\otimes N\longrightarrow\Chom(M,N)$ is  given by,
 \begin{equation}
 \big[\phi(f\otimes n)\big] (m)=(1\otimes S(g_{f,m}))\otimes_H n,
 \end{equation}
 if $f\in M^*$, $ m\in M$, $n\in
 N$ and $f(m)=(g_{f,m}\otimes 1)\otimes_H1\in (H\otimes H)\otimes_H\mathbf{k}$.
\end{proposition}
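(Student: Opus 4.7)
The plan is to verify four claims in order: (i) $\phi(f\tt n)$ lands in $\Chom(M,N)$, (ii) $\phi$ is $H$-linear, (iii) $\phi$ is bijective, and (iv) $\phi$ is $L$-equivariant. For (i), the trivial $H$-action on $\kk$ forces the transformation law $g_{f,hm} = g_{f,m}\,S(h)$: from $f(hm) = ((1\tt h)\tt_H 1)(g_{f,m}\tt 1)\tt_H 1 = (g_{f,m}\tt h)\tt_H 1$, using \eqref{sigfour} and the counit relation $h_{(-1)}\ep(h_{(2)}) = S(h)$, one obtains $(g_{f,m}\tt h)\tt_H 1 = (g_{f,m}\,S(h)\tt 1)\tt_H 1$. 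Then
\[
[\phi(f\tt n)](hm) = (1\tt S(g_{f,m}\,S(h)))\tt_H n = (1\tt h\,S(g_{f,m}))\tt_H n = ((1\tt h)\tt_H 1)[\phi(f\tt n)](m),
\]
using $S^2=\id$ (valid since $H$ is cocommutative), which is precisely the condition \eqref{hclm}. Claim (ii) reduces, after expanding both sides via the $H$-action on $M^*\tt N$ (given by the coproduct) and \eqref{hactchom}, to the identity $(1\tt S(h_{(1)}\, g))\tt_H h_{(2)} n = (h\tt S(g))\tt_H n$ with $g = g_{f,m}$; putting both sides in canonical form via \eqref{sigfour} shows they both equal $(h\, g_{(1)}\tt 1)\tt_H S(g_{(2)}) n$, after applying the antipode identity $S(h_{(1)}) h_{(2)} = \ep(h)$ and the counit axiom.

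For (iii), exploit that $M = H\tt M_0$ with $\dim_\kk M_0 = r < \infty$ and fix a basis $\{v_j\}$ of $M_0$. The assignment $f\mapsto (g_{f,\,1\tt v_j})_j$ identifies $M^*\cong H^{\oplus r}$ as $H$-modules, so $M^*\tt N \cong H\tt M_0^*\tt N$; dually, \leref{lhhh}(b) and the canonical form in $(H\tt H)\tt_H N$ yield $\Chom(M,N)\cong H\tt M_0^*\tt N$. Using \eqref{sigfour} together with $S(g)_{(-1)} = g_{(1)}$ and $S(g)_{(2)} = S(g_{(2)})$ (valid for cocommutative $H$), $\phi$ is exhibited as the $\kk$-linear map $g\tt v_j^*\tt n\mapsto g_{(1)}\tt v_j^*\tt S(g_{(2)}) n$, whose two-sided inverse $g\tt v_j^*\tt n\mapsto g_{(1)}\tt v_j^*\tt g_{(2)} n$ is verified directly using the antipode axiom \eqref{antip}.

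The $L$-equivariance (iv) is the main obstacle. One first extracts the $L$-module structure on $M^* = \Chom(M,\kk)$ from \eqref{actchom}: since $\kk$ is a trivial $L$-module, the first term of \eqref{actchom} vanishes, leaving $(a*f)(m) = -((\si\tt\id)\tt_H\id) f(a*m)$, from which one reads off an explicit formula for $a*f\in (H\tt H)\tt_H M^*$ in terms of the $g_{f,m_i}$ where $a*m = \sum_i(\al_i\tt\be_i)\tt_H m_i$. On the left-hand side of the desired identity $\phi(a*(f\tt n)) = a*\phi(f\tt n)$ (with $\phi$ extended $H$-linearly to $(H\tt H)\tt_H(M^*\tt N)$), \eqref{eq:tensor} splits $a*(f\tt n)$ into an $a*f$-summand and an $a*n$-summand, to which $\phi$ is applied termwise. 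On the right-hand side, \eqref{actchom} splits $a*\phi(f\tt n)$ into an ``action on $N$'' piece $a*([\phi(f\tt n)](m))$ and a ``pullback'' piece $-((\si\tt\id)\tt_H\id)\phi(f\tt n)(a*m)$. The strategy is to match the $a*n$-summand with the action-on-$N$ piece, and the $a*f$-summand with the pullback piece; both matchings use only the sliding identity $(x\tt y)\tt_H v = (x y_{(-1)}\tt 1)\tt_H y_{(2)} v$, the antipode and counit axioms, and cocommutativity of $H$. The main technical challenge is controlling the multiple layers of Sweedler notation through these reductions to canonical form, and in particular the correct handling of the implicit $L$-action on $M^*$.
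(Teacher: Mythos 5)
Your steps (i) and (ii) coincide with the paper's computations. Step (iii) takes a genuinely different route: the paper proves injectivity directly (from $\sum_i(1\tt S(g_{f_i,m}))\tt_H n_i=0$ it concludes $g_{f_i,m}=0$ for all $m$, hence $f_i=0$) and surjectivity by exhibiting an explicit preimage of a given $g\in\Chom(M,N)$ built from functionals $f_{ij}(m_k)=[(S(h_{ij})\tt1)\tt_H1]\de_{ik}$, whereas you identify both sides with $H\tt M_0^*\tt N$ in canonical coordinates and write down $\phi$ together with a two-sided inverse. Your version is cleaner, the inverse check via $g_{(1)}\tt v_j^*\tt g_{(2)}S(g_{(3)})n=g\tt v_j^*\tt n$ is correct, and both arguments use freeness of $M$ in the same implicit way the paper does.

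The gap is in step (iv). Your decomposition of both sides and the proposed matching (the $a*n$-summand against the action-on-$N$ piece, the $a*f$-summand against the pullback piece) is exactly the paper's strategy, and the first matching is indeed immediate. But the second matching is the crux of the proposition and you do not carry it out; you only assert that it follows from the sliding identity, the antipode and counit axioms, and cocommutativity. In the paper this matching is isolated as the identity
\[
-(h^{a,f}\tt 1\tt S(g_{f_a,m}))\tt_H n \;=\; \bigl(h^{a,m}(g_{f,m_a})_{(-1)}\tt1\tt (g_{f,m_a})_{(-2)}\bigr)\tt_H n,
\]
and it is not a purely formal Sweedler manipulation: it is established by computing the single element $(a*f)(m)\in H^{\tt3}\tt_H\kk$ in two different ways --- once from the canonical form of $a*f$ (which yields the left-hand side after applying \leref{lhhh}(c), i.e.\ the identity \eqref{identity} $(1\tt g\tt1)\tt_H1=(g_{(-1)}\tt1\tt g_{(-2)})\tt_H1$), and once from the pullback term of \eqref{actchom} (which yields the right-hand side). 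The two sides of your matching are reconciled only through this intermediate object, and without that computation the term-by-term comparison does not close; in particular the role of \eqref{identity}, which moves the $H$-factor out of the middle tensor slot, is absent from your list of ingredients. Since you flag precisely this point as the main technical challenge without resolving it, the proposal as written does not yet constitute a proof of $L$-equivariance.
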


\begin{proof} Let us check that $\phi(f\tt v)\in\Chom(M,N)$. Since $f\in
M^*=\Chom(M,\mathbf{k})$, using (\ref{sigfour}) combined with
(\ref{cou}), and recalling that the $H$-module structure of
$\mathbf{k}$ is given via the counit, we have that $f(hm)=((1\tt
h)\tt_H 1)(f(m))= (g_{f,m}\tt h)\tt_H 1= (g_{f,m}h_{(-1)}\tt
1)\tt_H\varepsilon(h_{(2)})= (g_{f,m}S(h)\tt 1)\tt_H 1$, for all
$m\in M$ and $h\in H$. Thus, by  (\ref{sabsbsa}),
\begin{align}
\big[\phi(f\tt n)\big](hm)&= (1\tt S( g_{f,m}S(h)))\tt_H
 n= (1\tt hS( g_{f,m}))\tt_H
 n\nonumber\\
&= ((1\tt h)\tt_H 1)((1\tt S( g_{f,m}))\tt_H
n)\nonumber\\
&= ((1\tt h)\tt_H 1)\big[\phi(f\tt n)\big](m).\nonumber
\end{align}
Now, we will check that this identification given via $\phi$ is
$H$-linear. By the $H$-module structure of the tensor product of
modules and (\ref{cou2}), we have that,
\begin{eqnarray}
\big[\phi(h(f\tt n))\big](m)&=&\big[\phi( h_{(1)}f\tt
h_{(2)}n)\big](m)=(1\tt
S(h_{(1)} g_{f,m}))\tt_H h_{(2)}n\nonumber\\
&=&[(1\tt S(g_{f,m}))(1\tt S(h_{(1)}))\Delta(h_{(2)} )]\tt_H
n\nonumber\\
&=&(h\tt S(g_{f,m}))\tt_H n=((h\tt 1)\tt_H 1)\big[\phi(f\tt
n)\big](m)
\nonumber\\
&=&\big[h\ \phi(f\tt n)\big](m).\nonumber
\end{eqnarray}
Now, we will show that $\phi$ is a morphism of $L$-modules. To
keep simple expressions, we shall use the notation introduced in
(\ref{notation}) and Remark \ref{**}. Therefore, we consider
$\phi(f\tt n)*m:=[\phi(f\tt n)](m)$ and since we have already
shown the  $H$-bilinearity, it is actually a polylinear map,
therefore using the composition rule (\ref{abc*3'}) and the action
(\ref{eq:tensnew}), we have

\begin{equation}\label{I-II}
\phi(a*(f\tt n))*(m)=(h^{a,f}\tt 1\tt S(g_{f_a,m})\tt_H n+
(h^{a,n}\tt 1\tt S(g_{f,m}))\tt_H  n_a,
\end{equation}

\vskip .3cm \noindent if $f_a*(m)=(g_{f_a,m}\tt 1)\tt_H 1$ and $
a*(f\tt n)=(h^{a,f}\tt 1)\tt_H (f_a\tt n)+ (h^{a,n}\tt 1)\tt_H
(f\tt n_a)$.

On the other hand, by (\ref{actchom}), (\ref{abc*3'}) and
(\ref{abc*6'}),
\begin{align}\label{III-IV}
 \left[a*\phi  (f\tt n)\right]& *(m)= \  a*((1\tt S(g_{f,m}))\tt_H
 n)\qquad \qquad
\nonumber\\
\,&\qquad\qquad-((\si\tt\id)\tt_H\id)[\phi(f\otimes
n)]*((h^{a,m}\otimes
1)\tt_H m_a)\nonumber\\
= \ & (h^{a,n}\tt 1 \tt S(g_{f,m}))\tt_H n_a\nonumber\\
\,&\qquad- ((\si\tt\id)\tt_H\id)((1\tt
h^{a,m}(g_{f,m_a})_{(-1)}\tt (g_{f,m_a})_{(-2)})\tt_H n) \nonumber\\
= \ & (h^{a,n}\tt 1 \tt S(g_{f,m}))\tt_H n_a\nonumber\\
\,&\qquad-  (h^{a,m}(g_{f,m_a})_{(-1)}\tt1\tt
(g_{f,m_a})_{(-2)})\tt_H n.
\end{align}
Now, comparing (\ref{I-II}) and (\ref{III-IV}), it is enough to
show that
\begin{equation}\label{P}
-(h^{a,f}\tt 1\tt S(g_{f_a,m}))\tt_H n=
(h^{a,m}(g_{f,m_a})_{(-1)}\tt1\tt (g_{f,m_a})_{(-2)})\tt_H n.
\end{equation}
But this follows from two different ways to compute $(a*f)(m)$. By
(\ref{actchom}) and (\ref{identity}), we have that
\begin{align}
(a*f)(m)&=-((\sigma\tt\id)\tt_H\id)(f*((h^{a,m}\tt 1)\tt_H m_a))\nonumber\\
&= -((\sigma\tt\id)\tt_H\id) ((g_{f,m_a}\tt h^{a,m}\tt 1)\tt_H 1
)\nonumber\\
&=- (h^{a,m}\tt g_{f,m_a} \tt 1)\tt_H 1
\nonumber\\
&=- (h^{a,m} (g_{f,m_a})_{(-1)} \tt 1\tt (g_{f,m_a})_{(-2)})\tt_H
1
\end{align}
and by notation (\ref{notation}) and (\ref{identity}),
\begin{align*}
(a*f)(m)= \ & (h^{a,f}(g_{f_a,m})_{(1)}\tt (g_{f_a,m})_{(2)}\tt
1)\tt_H 1 \\
=\ & (h^{a,f}\tt 1\tt S(g_{f_a,m}))\tt_H 1,
\end{align*}
hence (\ref{P}) follows. Now we have to prove the injectivity.
Suppose that for all $m\in M$ we have $ 0=\phi(\sum_{i} f_i\tt
n_i)(m)=\sum_i (1\tt S(g_{f_i,m}))\tt_H n_i.$ Then $g_{f_i,m}=0$
for all $m\in M$, since $S$ is bijective. Therefore $f_i=0$ for
all $i$.

It remains to prove that $\phi$ is surjective. Let $g\in
$Chom$(M,N)$ and $M=\oplus_{i=1}^n H m_i$. Then there exist
$h_{ij}\in H$ and $n_{ij}\in N$ such that $g(m_i)=\sum_j (1\tt
h_{ij})\tt_H n_{ij}$. Now, we define $f_{ij}\in M^*$ by
$f_{ij}(m_k)=[(S(h_{ij})\tt 1)\tt_H 1]\delta_{ik}$. Then
$g=\phi(\sum_{i,j} f_{ij}\tt n_{ij})$ since $\phi(\sum_{i,j}
f_{ij}\tt n_{ij})(m_k)=\sum_{i,j} \delta_{ik}(1\tt h_{ij})\tt_H
n_{ij}=\sum_{j} (1\tt h_{kj})\tt_H n_{kj}=g(m_k)$, finishing our
proof.
\end{proof}

%%%%%%%%%%%%%%%%%%%%%%%%%%%%%%%%%%%%%%%%%%%

As a motivation  for the definition of $H$-coalgebra and
pseudo-bialgebra, we used the  cohomology theory of pseudoalgebras
developed in \cite{BDK}, in order to get to  the right notion of
cocycle that will be the compatibility condition between
pseudobracket and coproduct. See Section \ref{subcohom}, for a
brief review of the basics of this theory.

We have the following definition:

\begin{definition} A {\it Lie  H-coalgebra} $R$ is an $H$-module,
endowed with  an $H$-homomorphism
\begin{displaymath}
\delta:R\to \wedge^2 R
\end{displaymath}
such that
\begin{equation}\label{co-jacobi-delta}
(I\otimes \delta) \delta - \tau_{12}(I\otimes \delta) \delta=
(\delta\otimes I) \delta.
\end{equation}
where $\tau_{12}=(1,2)\in \mathcal{S}_3$.
\end{definition}
This is nothing but the standard definition of a Lie coalgebra,
compatible  with the  $H$-module structure of $R$.

\vskip .4cm

In this section we will give the answer to the following natural
question: Does the "dual" of one structure produce the other, at
least in finite rank?  The answer is given by the Theorem
\ref{duality-thm}, below. But first we will need the following
definition.

\begin{definition} Let $L$ be a finite free $H$-module with
basis $\{a_i\}_{i=1}^n$. The {\it dual basis} of $\{a_i\}_{i=1}^n$
in $L^*$ is defined by the set  $\{a^j\}_{j=1}^n$, where each
$a^j\in L^*= \hbox{Chom}(L,\kk)$ is given by
 \begin{eqnarray} \label{dualbasis}
 a^i*(a_j)=(1\otimes1)\otimes_H\delta_{ij}.
 \end{eqnarray}
\end{definition}
It is easily  checked that, with this definition $\{a^j\}_{j=1}^n$
is a linearly independent set  such that $H$-generates $L^*$.

 \begin{theorem} \label{duality-thm}
(a)  Consider $L=\oplus_{i=1}^N H a_i$ a finite free Lie
$H$-pseudoalgebra,  with pseudobracket given by
$$
[a_i*a_j]=\sum_{k=1}^N (h_k^{ij}\otimes l_k^{ij})\otimes_H a_k.
$$
Let $L^*=\hbox{\rm Chom}(L,\kk)= \oplus_{i=1}^N H a^i$ the dual of
$L$ where $\{a^i\}$ is the dual basis corresponding to $\{a_i\}$.
Define $\delta:L^*\longrightarrow L^*\otimes L^*$ as follows:
\begin{eqnarray}
 \label{delta}
\delta(a^k)=\sum_{i,j} S(h_k^{ij})a^i\otimes S(l_k^{ij})a^j
\end{eqnarray}
and extend it $H$-linearly, i.e.
$\delta(ha^k)=\Delta(h)\delta(a^k)$. Then $(L^*,\delta)$ is a Lie
$H$-coalgebra.

\

(b) Conversely, let $(R,\delta)$ be a finite Lie $H$-coalgebra.
Then  the left $H$-module $R^*=\hbox{\rm Chom}(R,\mathbf{k})$ is a
Lie $H$-conformal algebra  with  the $x$-brackets defined by
\begin{equation} \label{x-prod-psudo[]}
[f\,_x\,g\,]_y(r)=\sum
f_{x_{(2)}}(r_{(1)})\,g_{yx_{(-1)}}(r_{(2)})
\end{equation}
with $f,\,g\in R^*$, $r\in R$ and $x,\,y\in X=H^*$, where
$\delta(r)=\sum r_{(1)}\otimes r_{(2)}$.
\end{theorem}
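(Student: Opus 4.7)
The plan is to verify that under the pairing $a^i * a_j = (1\otimes 1)\otimes_H \de_{ij}$, each axiom of a Lie pseudoalgebra on $L$ dualizes to the corresponding axiom of a Lie $H$-coalgebra on $L^*$, and conversely. The appearance of $S$ in the formula (\ref{delta}) reflects the usual conversion of a right $H$-action on the dual into a left $H$-action (cf.\ (\ref{sxhxsh})). In both directions, $H$-linearity is built into the setup: in (a), $L^*$ is free on $\{a^i\}$ and $\de$ is declared $H$-linear by extension; in (b), $H$-sesqui-linearity of the $x$-bracket follows directly from the corresponding property (\ref{bl2phix}) of the Fourier coefficients.

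For part (a), the pseudobracket skew-commutativity $[a_j*a_i] = -(\si\otimes_H\id)[a_i*a_j]$ forces the structure constants to satisfy $C_k^{ji} = -\si(C_k^{ij})$, where $C_k^{ij} := h_k^{ij}\otimes l_k^{ij} \in H\otimes H$. Rewriting $\de(a^k) = \sum_{i,j}(S\otimes S)(C_k^{ij})\cdot(a^i\otimes a^j)$ and applying the flip $\tau$, a reindexing $(i,j) \leftrightarrow (j,i)$ together with $S\si = \si S^{\otimes 2}$ and the identity above yields $\tau\de(a^k) = -\de(a^k)$, so $\de$ lands in $\wedge^2 L^*$. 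For co-Jacobi, I would expand $(\id\otimes\de)\de(a^k)$ and $(\de\otimes\id)\de(a^k)$ using (\ref{delta}) and match the resulting coefficients against the structure constants of $[a_i*[a_j*a_l]]$ and $[[a_i*a_j]*a_l]$ given by formulas (\ref{abc*3'}) and (\ref{abc*6'}), with $S^{\otimes 3}$ applied throughout; the twist $\tau_{12}$ in (\ref{co-jacobi-delta}) then corresponds precisely to the permutation $((\si\otimes\id)\otimes_H\id)$ appearing in the pseudoalgebra Jacobi identity (\ref{jac*}), and the desired equality reduces to the Jacobi identity for $L$.

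For part (b), the $x$-bracket formula (\ref{x-prod-psudo[]}) is well defined: for fixed $r\in R$, $\de(r) = \sum r_{(1)}\otimes r_{(2)}$ is a finite sum (since $R$ is finite), and locality of the Fourier coefficients $f_x, g_y$ (see (\ref{codimphix})) yields locality (\ref{codimx}) of the $x$-bracket in both $x$ and $y$. The $H$-sesqui-linearity properties (\ref{bl1})--(\ref{bl2}) follow from (\ref{bl2phix}) applied to $f_x$ and $g_y$, combined with the identity $\de(hr) = \De(h)\de(r) = h_{(1)} r_{(1)}\otimes h_{(2)} r_{(2)}$. Skew-commutativity (\ref{sksy}) is a direct consequence of $\de$ landing in $\wedge^2 R$, which lets one swap the roles of $r_{(1)}$ and $r_{(2)}$ with a sign. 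Finally, the Jacobi identity (\ref{jid}) for $R^*$ is obtained by evaluating the co-Jacobi identity (\ref{co-jacobi-delta}) for $\de$ against a triple $(f, g, h) \in (R^*)^{\otimes 3}$ via iterated application of (\ref{x-prod-psudo[]}), with the $\tau_{12}$ term on the left of (\ref{co-jacobi-delta}) matching the second term on the left of (\ref{jid}).

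The main obstacle is the Hopf-algebraic bookkeeping: the pseudoalgebra axioms live in iterated tensor products over $H$ with a non-trivial use of the coproduct, while the coalgebra axioms live in plain tensor products, and the Fourier transform (\ref{ftrans}) bridging them is what introduces the Sweedler-notation manipulations $x_{(2)}$, $x_{(-1)}$ appearing in (\ref{x-prod-psudo[]}) and (\ref{jid}). In practice I would verify (b) first, since the $x$-product formulation of Definition \ref{dhconf2} is tailor-made for the kind of pointwise verification the formula (\ref{x-prod-psudo[]}) invites, and then deduce (a) by reversing the same translation under the identification $L^{**} \cong L$ furnished by the finite-rank hypothesis and Proposition \ref{prop:ddd}.
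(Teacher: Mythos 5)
Your overall strategy coincides with the paper's: part (a) is proved by matching the structure constants of the Jacobi identity in $L$ against the co-Jacobi identity after applying $S^{\tt 3}$, and part (b) by a pointwise verification in terms of Fourier coefficients. The difficulty is that you defer, under the heading of ``Hopf-algebraic bookkeeping,'' exactly the computations that constitute the proof, and in two places you mischaracterize them in a way that amounts to a gap. First, skew-commutativity in (b) is \emph{not} a direct consequence of $\de$ landing in $\wedge^2 R$: the axiom \eqref{sksy} to be verified is not $[f_xg]=-[g_xf]$ but carries the twist $-\sum_i\langle x,{h_i}_{(-1)}\rangle\,{h_i}_{(-2)}\,[g_{x_i}f]$, and after swapping $r_{(1)}$ and $r_{(2)}$ one must still prove the identity \eqref{eq:skew.lemma} matching $x_{(2)}\tt yx_{(-1)}$ with the twisted expression; the paper spends half a page pairing both sides against $k\tt l\in H\tt H$ to establish it. The Jacobi identity likewise reduces not only to co-Jacobi but to the separate Sweedler identity \eqref{nose} in $X^{\tt 3}$. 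Second, locality in $x$ does not follow merely from locality of the Fourier coefficients: $x$ enters the bracket through $\De(x)=x_{(1)}\tt x_{(2)}$, an infinite sum in the completed tensor product, so one needs \eqref{fils2} to see that for $x$ deep in the filtration each term has $x_{(-1)}$ or $x_{(2)}$ deep in the filtration, then the finiteness of $R$ to make the resulting bound uniform over a generating set, and finally $H$-sesqui-linearity in $y$ to extend from generators to all of $R$. Your parenthetical that $\de(r)$ is a finite sum ``since $R$ is finite'' misplaces where finiteness is actually used.

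The closing suggestion to prove (b) first and then ``deduce (a) by reversing the translation under $L^{**}\cong L$'' also does not work as stated: part (b) takes a Lie $H$-coalgebra as its \emph{hypothesis}, so to apply it to $(L^*,\delta)$ you would already need to know that $\delta$ satisfies co-Jacobi, which is precisely the content of (a). The two directions require independent verification, as the paper carries out.
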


\begin{proof}  (a)  Due to the
skew-commutativity of the pseudobracket of $L$, $[a_i*a_j]=\sum_k
(h^{ij}_k\tt l^{ij}_k)\tt_H a_k= -\sum_k (l^{ji}_k\tt
h^{ji}_k)\tt_H a_k=-(\sigma\tt_H\id)[a_j*a_i]$. Then we have that
\begin{align*}
\delta(a^k)&=\sum_{i,j} S(h^{ij}_k)a^i\tt
S(l^{ij}_k)a^j\nonumber\\
&=- \sum_{i,j} S(l^{ji}_k)a^i\tt
S(h^{ji}_k)a^j\nonumber\\
&=-\sigma(\delta(a^k)),
\end{align*}
showing that $\delta(a^k)\in \wedge^2(L)$. Now we have to check
the co-Jacobi condition for $\delta$. Using the notation
$[a_i*a_j]=\sum_k (h^{ij}_k\tt l^{ij}_k)\tt_H a_k$ and the
composition rules (\ref{abc*3'}) and (\ref{abc*6'}), together with
the Jacobi identity in $L$,
\begin{align} \label{jacobi/co}
0&=[a_i*[a_j*a_l]]-(\sigma\tt\id)[a_j*[a_i*a_l]]-[[a_i*a_j]*a_l]\nonumber\\
&=\sum_{k,s} (h^{ik}_s\tt h^{jl}_k(l^{ik}_s)_{(1)}\tt
l^{jl}_k(l_s^{ik})_{(2)})\tt_H a_s
\nonumber\\
&\qquad \quad - \sum_{k,s} (h^{il}_k(l^{jk}_s)_{(1)}\tt
h^{jk}_s\tt
l^{il}_k(l_s^{jk})_{(2)})\tt_H a_s\nonumber\\
&\qquad\qquad\qquad - \sum_{k,s} (h^{ij}_k(h^{kl}_s)_{(1)}\tt
l^{ij}_k(h_s^{kl})_{(2)}\tt l^{kl}_s)\tt_H a_s.
\end{align}
Now,
\begin{align*}
(\id\tt\delta)(\delta (a^s))&= (\id\tt\delta)\left(\sum_{i,k}
S(h^{ik}_s)a^i\tt S(l^{ik}_s)a^k\right)\nonumber\\
&=\sum_{i,k,j,l} S(h^{ik}_s)a^i \tt \Delta(S(l^{ik}_s))\left(
S(h^{jl}_k)a^j\tt S(l^{jl}_k)a^l\right)\nonumber\\
&= \sum_{i,j,k,l} \left((S\tt S\tt S)(h^{ik}_s\tt
h^{jl}_k(l^{ik}_s)_{(1)}\tt
l^{jl}_k(l_s^{ik})_{(2)})\right)(a^i\tt a^j\tt a^l).
\end{align*}
Similarly,
\begin{eqnarray*}
-\tau_{12}(\id\tt\delta)(\delta(a^s))=-\sum_{i,j,k,l} \left((S\tt
S\tt S)(h^{il}_k(l^{jk}_s)_{(1)}\tt h^{jk}_s\tt
l^{il}_k(l_s^{jk})_{(2)})\right)(a^i\tt a^j\tt a^l),
\end{eqnarray*}
and
\begin{eqnarray*}
(\delta\tt\id)(\delta(a^s))= \sum_{i,j,k,l} \left((S\tt S\tt
S)(h^{ij}_k(h^{kl}_s)_{(1)}\tt l^{ij}_k(h_s^{kl})_{(2)}\tt
l^{kl}_s)\right)(a^i\tt a^j\tt a^l).
\end{eqnarray*}
Therefore comparing with (\ref{jacobi/co}), we have that $\delta$
satisfies the co-Jacobi condition, namely,
\begin{eqnarray*}
(\id\tt\delta)(\delta (a^s))
-\tau_{12}(\id\tt\delta)(\delta(a^s))-(\delta\tt\id)(\delta(a^s))=0.
\end{eqnarray*}

\noindent (b) We define our candidate for the pseudobracket in
$R^*$ in terms of its Fourier coefficients (cf.  formula (9.21) in
\cite{BDK} or (\ref{xpcend})):
\begin{displaymath}
[f_x g]_y (r) = \xp f {x_{(2)}}(r_{(1)})\, {\xp g{y x_{(-1)}}
(r_{(2)})} = \tsum_i\, \xp f {x_i}(r_{(1)}) \,{ \xp g {y (h_i
S(x))} (r_{(2)})},
\end{displaymath}
where $\{h_i\}$ and $\{x_i\}$ are dual basis in $H$ and $X$
respectively. The $H$-sesqui-linearity properties of $[f_x g]_y
(r)$ with respect to $x$ and $y$ are tedious but straightforward.

By properties \eqref{fils1}, \eqref{fils3}, \eqref{fils2} of the
filtration $\{\fil_n X\}$, if $y\in F_nX$, then $yx_{(-1)}\in
F_nX$ for all $x\in X$. Thus by locality of $g$, it follows that
for each fixed $x\in X$, $r\in R$ there is an $n$ such that $[f_x
g]_y (r) = 0$ for $y\in \fil_n X$. Therefore, by Remark \ref{www},
for each $x\in X$ we have that $ [f _x g] \in
\Chom(R,\mathbf{k})$.

In order to see that $[f*g]$  is well defined, we need to check
that $[f _x g]$ satisfies locality, i.e. for each $f,g\in R^*$
there exists $ n\in\mathbb{N}$ such that $[f_xg]=0$ for all $x\in
\fil_nX$. By the locality of $f$ and $g$, for each term of
$\delta(r)=\sum r_{(1)}\tt r_{(2)}$, there are  $n_1$ and $n_2$
such that $f_{x_{(2)}}(r_{(1)})=0$ if $x_{(2)}\in \fil_{n_1} X$,
and  $ g_{yx_{(-1)}}(r_{(2)}) = 0$ if $yx_{(-1)}\in \fil_{n_2}X$.
Thus taking $n$ big enough and using  \eqref{fils2}  we have that
$x_{(2)}$ or $x_{(-1)}$ belongs to $\fil_n(X)$. Since we have that
$y\,\fil_nX\subseteq \fil_nX$ for all $y\in X$ then, we conclude
that for each $r\in R$ there exists $n$ such that $[f_xg]_y(r)=0$
for all $y\in X$ and for all $x\in \fil_n X$.

 Since $R$ is
finite, we can choose an $n$ that works for all $r$ belonging to a
set of generators of $R$ over $H$. Now the $H$-sesqui-linearity of
$[f_x g]_y (r)$ with respect to $y$ (for fixed $x$) implies that
$[f_x g]_y (r) = 0$ for all $y$ and $r$. Hence $[f_ x g] = 0$ for
$x\in \fil_n X$.

To finish our proof, we need to check the skew-commutativity
(\ref{sksy}) and the Jacobi identity (\ref{jid}) for
(\ref{x-prod-psudo[]}). In order to see the skew-commutativity we
need to proof that $[f_x g]=-\sum \langle x,h_{i_{(-1)}}\rangle\,
h_{i_{(-2)}}\,[g_{x_i}f]$. Evaluating the right hand side of this
equation in $r$ and using the skew-symmetry of $\delta$, we have
that

\begin{align*}
 -\sum\langle x,h_{i_{(-1)}}\rangle\,
 h_{i_{(-2)}}\,[g_{x_i}f]_y(r)&=-\sum
\langle x,h_{i_{(-1)}}\rangle\, \,[g_{x_i}f]_{yh_{i_{(-2)}}}(r)\nonumber\\
&=-\sum \langle x,h_{i_{(-1)}}\rangle\,
\,g_{x_{i_{(2)}}}(r_{(1)})\,f_{(yh_{i_{(-2)}})x_{i_{(-1)}}}(r_{(2)})\nonumber\\
&= \sum \langle
x,h_{i_{(-1)}}\rangle\,\,f_{(yh_{i_{(-2)}})x_{i_{(-1)}}}(r_{(1)})\,
g_{x_{i_{(2)}}}(r_{(2)}).
\end{align*}

Since  $[f_x g]_y
(r)=f_{x_{(2)}}(r_{(1)})\,g_{yx_{(-1)}}(r_{(2)})$, in order to
prove the skew-commutativity is enough to show that

\begin{align}\label{eq:skew.lemma}
\sum \langle
x,h_{i_{(-1)}}\rangle\,\left((yh_{i_{(-2)}})x_{i_{(-1)}}\tt
x_{i_{(2)}}\right)=x_{(2)}\tt yx_{(-1)}.
\end{align}
For $k,l\in H$,       we have

\begin{align*}
& \sum \langle x,h_{i_{(-1)}}\rangle
\,\left((yh_{i_{(-2)}})x_{i_{(-1)}}\tt
x_{i_{(2)}}\right)(k\tt l)=\nonumber\\
&=\sum \langle x_{(1)},h_{i_{(-1)}}\rangle \langle
x_{(2)},1\rangle\,\left((yh_{i_{(-2)}})x_{i_{(-1)}}\tt
x_{i_{(2)}}\right)(k\tt l) \qquad\qquad\qquad \hbox{by (\ref{x,fg} )} \nonumber\\
&=\sum \langle x_{(-1)},h_{i_{(1)}}\rangle \langle
x_{(2)},1\rangle\,\langle(yh_{i_{(-2)}})x_{i_{(-1)}},
k\rangle\langle
x_{i_{(2)}},l\rangle\nonumber\\
&= \sum \langle x_{(-1)},h_{i_{(1)}}\rangle \langle
x_{(2)},1\rangle\,\langle(yh_{i_{(-2)}}), k_{(1)}\rangle\langle
x_{i_{(-1)}},k_{(2)}\rangle\langle
x_{i_{(2)}},l\rangle  \qquad\qquad \hbox{by (\ref{xy,f} )} \nonumber\\
&=  \sum \langle x_{(-1)},h_{i_{(2)}}\rangle \langle
x_{(2)},1\rangle\,\langle(k_{(-1)}y),h_{i_{(1)}} \rangle\langle
x_{i_{(1)}},k_{(-2)}\rangle\langle x_{i_{(2)}},l\rangle \nonumber\\
&\qquad \qquad \qquad \qquad \qquad \qquad \hbox{by (\ref{hx}),
(\ref{xh} ),
(\ref{sxhxsh} ) and cocommutativity of $H$}\nonumber\\
&= \sum \langle (k_{(-1)}y)x_{(-1)},h_{i}\rangle \langle
x_{(2)},1\rangle\, \langle x_{i},k_{(-2)}l\rangle
\qquad\qquad \qquad \quad\hbox{by (\ref{xy,f} ) and (\ref{x,fg})}\nonumber\\
&= \langle (k_{(-1)}y)x_{(-1)},k_{(-2)}l\rangle \langle
x_{(2)},1\rangle \qquad\qquad\qquad\qquad\qquad
\qquad\qquad\hbox{by (\ref{AAA} )}.
\end{align*}

\vskip .5cm

\noindent Since $(hy)x_{(-1)}\tt x_{(2)}=h_{(1)}(yx_{(-1)})\tt
h_{(2)}x_{(2)}$, we get

\begin{align*}
\langle (k_{(-1)}y)x_{(-1)},k_{(-2)}l\rangle \langle
x_{(2)},1\rangle&=\langle
k_{(-1)_{(1)}}(yx_{(-1)}),k_{(-2)}l\rangle \langle k_{(-1)_{(2)}}x_{(2)},1\rangle \nonumber\\
&=\langle yx_{(-1)},k_{(1)_{(1)}}k_{(-2)}l\rangle \langle
x_{(2)},k_{(1)_{(2)}}
 \rangle \qquad\,\, \hbox{by (\ref{hx})}\nonumber\\
 &=\langle yx_{(-1)},l\rangle \langle
x_{(2)},k \rangle , \qquad\quad\quad \, \,\,\,\,\hbox{by
(\ref{antip}) and (\ref{cou})}
\end{align*}
proving the identity in (\ref{eq:skew.lemma}).  To finish, we
still have to check Jacobi identity. We have that
\begin{align}\label{aJacobi}
[f_x[g_yl]]_z(r)-
 & [g_y[f_xl]]_z(r)=f_{x_{(2)}}(r_{(1)})\,[g_yl]_{zx_{(-1)}}(r_{(2)})-g_{y_{(2)}}(r_{(1)})\,
[f_xl]_{zy_{(-1)}}(r_{(2)})\nonumber\\
 &=f_{x_{(2)}}(r_{(1)})\,g_{y_{(2)}}(r_{(2)_{(1)}}) \,
l_{(zx_{(-1)})y_{(-1)}}(r_{(2)_{(2)}}) \\
&\qquad\qquad- g_{y_{(2)}}(r_{(1)})\,
f_{x_{(2)}}(r_{(2)_{(1)}})\,l_{(zy_{(-1)})x_{(-1)}}(r_{(2)_{(2)}}),\nonumber
\end{align}
and
\begin{align}\label{bJacobi}
[[f_{x_{(2)}}g]_{yx_{(1)}}l]_z(r)&=
[f_{x_{(2)}}g]_{(yx_{(1)})_{(2)}}(r_{(1)})\,
l_{z(yx_{(1)})_{(-1)}}(r_{(2)}) \\
&= f_{x_{(2)_{(2)}}}(r_{(1)_{(1)}}) \,
g_{(yx_{(1)})_{(2)}x_{(2)_{(-1)}}}
(r_{(1)_{(2)}})\,l_{z(yx_{(1)})_{(-1)}} (r_{(2)}).\nonumber
\end{align}
Due to the co-Jacobi condition of $\delta$ given in
(\ref{co-jacobi-delta}), comparing (\ref{aJacobi}) and
(\ref{bJacobi}), it is enough to show that
\begin{align}\label{nose}
x_{(2)_{(2)}}\tt(yx_{(1)})_{(2)}x_{(2)_{(-1)}}\tt
z(yx_{(1)})_{(-1)}&= x_{(2)}\tt y_{(2)}\tt
(zx_{(-1)})y_{(-1)}\nonumber\\
&= x_{(2)}\tt y_{(2)}\tt (zy_{(-1)})x_{(-1)}.
\end{align}
Due to the  commutativity and associativity of $X$, the last
equality is immediate. Since $\Delta(xy)=x_{(1)}y_{(1)}\tt
x_{(2)}y_{(2)}$, we have
\begin{align*}
x_{(2)_{(2)}}\tt & (yx_{(1)})_{(2)}x_{(2)_{(-1)}}\tt
z(yx_{(1)})_{(-1)}=\nonumber\\
&= (1\tt y_{(2)}\tt zy_{(-1)})(x_{(2)_{(2)}}\tt
x_{(1)_{(2)}}x_{(2)_{(-1)}}\tt
x_{(1)_{(-1)}})\nonumber\\
&= x_{(2)}\tt y_{(2)}\tt (zx_{(-1)})y_{(-1)},
\end{align*}
proving the first equality of (\ref{nose}) and finishing our
proof.
\end{proof}

Motivated by the definition of the differential of a 1-cochain in
the reduced complex of a Lie $H$-pseudoalgebra (Sect 15.1
\cite{BDK}), we introduce the following notion.

\begin{definition}A Lie $H$-{\it pseudo-bialgebra} is a triple $(L,[\ *\,],\delta)$ such that $(L,[\ *\,])$ is a pseudoalgebra,
$(L,\delta)$ is an $H$-coalgebra and they satisfy the cocycle
condition:
$$
a*\delta(b) - (\sigma\otimes_H 1) b*\delta(a)=\delta([a*b])
$$
for all $a$ and $b$ in $L$.
\end{definition}

\

\begin{example} Let $(\fg, [\  , \ ], \bar\delta)$ be a Lie bialgebra.
Now, it is easy to check that the pseudoalgebra
Cur\,$\fg=H\otimes \fg$ has a natural  Lie pseudo-bialgebra
structure given by:
\begin{displaymath}
\delta(f\otimes a)= f\cdot \overline{\delta}(a),
\end{displaymath}
for $f\otimes a\in$ Cur\,$\frak g$. But not all the bialgebra
structures on Cur$(\fg)$ are of this form, as it is shown in  the
next example.
\end{example}

\begin{example}\label{currr} Consider the rank 2 solvable Lie
pseudoalgebra
\begin{equation*}
    L_p =Ha\oplus Hb,
\end{equation*}
with $*$-bracket (extended by skew-symmetry
 and sesquilinearity and) given by
\begin{displaymath}
[a * a]=0=[b * b], \quad [a * b]= (p\otimes1) \otimes_H b,
\end{displaymath}
where $p\in H$. We shall not consider the most general case where
$p\otimes 1$ is replaced by $\alpha\in H\otimes H$. We do not plan
to give an exhaustive classification of  Lie pseudo-bialgebra
structures on $L_p$, instead, we shall study pseudo-bialgebra
structures on $L_p$ whose underlying coalgebra structure comes
from the dual of a solvable Lie pseudoalgebra $L_h$, with $h\in
H$. That is, fix $h\in H$, then by applying Theorem
\ref{duality-thm} to $L_h$ we obtain a Lie $H$-coalgebra structure
on $L_h$ by taking $\delta_h: L_h\to \wedge^2 L_h$ given by
\begin{equation*}
\delta_h(a)=0, \quad \delta_h(b)=  S(h) a\otimes  b -  b\otimes
S(h)a.
\end{equation*}
By a simple computation it is possible  to  show that $\delta_h$
is a Lie pseudo-bialgebra structure on $L_p$ if and only if
\begin{equation}\label{zzzzz}
    (S\otimes 1) \Delta(S(h)p)=-(1\otimes S)\Delta(S(h)p)
\end{equation}

In the special case of $p= 1$, we have that
$L_p\simeq\,$Cur$(T_2)$ where $T_2$ is the 2-dimensional Lie
algebra considered in Examples 2.2 and 3.2 in \cite{ES}. In this
case every  $h$ satisfying (\ref{zzzzz}) produce a non-isomorphic
Lie pseudo-bialgebra structure in Cur$(T_2)$, obtaining
pseudo-bialgebra structures that do not come from bialgebra
structures in $T_2$ as in the previous example. Moreover, in order
to see how different is the situation from the classical case,
observe that if $h$ satisfies (\ref{zzzzz}) and $S(h)=-h$  (which
is the case if $h\in \fg\subset \mathcal{U}(\fg )=H$), then
$\delta_h= d(1\otimes_H r) $, where $r= \frac 1 2( a\otimes ha-
ha\otimes a)$ (cf. (\ref{diga0}) and  (\ref{eq:deltar}) below),
showing that there are coboundary structures $\delta =dr$ (see
next section for the definition) in Cur$(T_2)$ with $\delta(a)=0$
and such structures are not present in the Lie algebra $T_2$ (see
Example 3.2 in \cite{ES}).
\end{example}

\begin{example} Recall that $gc_1$ can be identified with
$H\otimes H$   with pseudobracket defined as follows (see
(\ref{brackgc})):
$$
[(f\otimes a) * (g\otimes b)]= (f\otimes g a_{(1)})\otimes_H
(1\otimes b a_{(2)})-(fb_{(1)}\tt g)\tt_H(1\tt ab_{(2)}),
$$
for $f\otimes a$ and   $g\otimes b$ in $H\otimes H$. By
straightforward computations, it is possible to show that given
$r=(f\tt 1)\wedge (g\tt 1)\in gc_1\wedge gc_1$,

\begin{align*}
 \delta_r(1\otimes a)&=(fa_{(1)})_{(-1)}\cdot\big((fa_{(1)})_{(2)}
\otimes a_{(2)})\wedge (g\otimes 1)\big) -f_{(-1)}\cdot
\left((f_{(2)}\otimes a)\wedge
(g\otimes 1)\right)\nonumber\\
&+ (ga_{(1)})_{(-1)}\cdot\big((f\otimes 1)\wedge ((ga_{(1)})_{(2)}
\otimes a_{(2)})\big) -g_{(-1)}\cdot \left((f\otimes
1\right)\wedge(g_{(2)}\otimes a)),
\end{align*}
gives  a  Lie pseudo-bialgebra structure on $gc_1$. This is an
example of coboundary  Lie pseudo-bialgebra defined in the
following section and it is a generalization of an example given
in \cite{L}.
\end{example}

\begin{remark} The examples presented here shows that this theory
is richer than the classical Lie bialgebra theory. We are far from
classification results in this context. Observe that it is not
known
 if a conformal version or a pseudoalgebra version
  of Whitehead's lemma holds for Cur$(\fg)$.
\end{remark}

\vskip 1cm

\section{Coboundary  Lie pseudo-bialgebras}\label{sec:cob}
%%%%%%%%%%%%%%%%%%%%%%%%%%

\vskip .5cm

In this section we study a very important class of  Lie
pseudoalgebras, for which the $H$-coalgebra structure comes from a
1-coboundary.

%%%%%%%%%%%%%%%%%%%%%%%%%%
\subsection{Cohomology of pseudoalgebras}\label{subcohom}
%%%%%%%%%%%%%%%%%%%%%%%%%%

For the sake of completeness, we shall review some of the
definition given in Section 15 of \cite{BDK} .

\subsubsection{The complexes $C^\bullet(L,M)$ } \lbb{sclm}
%%%%%%%%%%%%%%%%%%%%%%%%%%%
 As
before, $H$ is a cocommutative Hopf algebra. Let $L$ be a Lie
$H$-\psalg\ and $M$ be an $L$-module.

By definition, $C^n(L,M)$, $n\ge1$, consists of all
\begin{equation}\lbb{nga}
\ga\in \Hom_{H^{\tt n}} (L^{\tt n}, H^{\tt n}\tt_H M)
\end{equation}
that are skew-symmetric. Explicitly, $\ga$ has the following
defining properties (cf.\ \eqref{bil*}, \eqref{ssym*}):

\noindent {\bf $H$-polylinearity:} For $h_i\in H$, $a_i\in L$,
\begin{equation*}
\ga(h_1 a_1 \tt\dotsm\tt h_n a_n) = ((h_1 \tt\dotsm\tt h_n)\tt_H
1)\,  \ga(a_1 \tt\dotsm\tt a_n).
\end{equation*}

\noindent{\bf Skew-symmetry:}
\begin{equation*}
\begin{split}
\ga(a_1 \tt\dotsm &\tt a_{i+1}\tt a_i \tt\dotsm\tt a_n)
\\
&= -(\si_{i,i+1} \tt_H\id)\, \ga(a_1 \tt\dotsm\tt a_i\tt a_{i+1}
\tt\dotsm\tt a_n),
\end{split}
\end{equation*}
where $\si_{i,i+1}\colon H^{\tt n} \to H^{\tt n}$ is the
transposition of the $i$th and $(i+1)$st factors.

For $n=0$, we put $C^0(L,M) = \kk\tt_H M \simeq M/H_+M$, where
$H_+ = \{ h\in H \st \ep(h)=0 \}$ is the augmentation ideal. The
differential $d\colon C^0(L,M) = \kk\tt_H M \to C^1(L,M) =
\Hom_H(L,M)$ is given by:
\begin{equation}\lbb{diga0}
\bigl( d(1\tt_H m) \bigr)(a) = \tsum_i \, (\id\tt\ep)(h_i) \, m_i
\in M
\end{equation}
if $a*m = \tsum_i \, h_i \tt_H m_i \in H^{\tt2} \tt_H M$, for
$a\in L$, $m\in M$.

For $n\ge1$, the differential $d\colon C^n(L,M) \to C^{n+1}(L,M)$
is given by
\begin{equation}\lbb{diga}
\begin{split}
&(d\ga)(a_1\tt\dotsm\tt a_{n+1})
\\
& = \;\; \sum_{1\le i\le n+1} (-1)^{i+1} (\si_{1 \to i}
\tt_H\id)\, a_i * \ga(a_1 \tt\dotsm\tt {\what a}_i \tt\dotsm\tt
a_{n+1})
\\
& + \sum_{1\le i<j \le n+1} (-1)^{i+j} (\si_{1 \to i, \, 2 \to j}
\tt_H\id)
\\
&\qquad\qquad\qquad\times \ga([a_i * a_j] \tt a_1 \tt\dotsm\tt
        {\what a}_i \tt\dotsm\tt {\what a}_j \tt\dotsm\tt a_{n+1}),
\end{split}
\end{equation}
where $\si_{1 \to i}$ is the permutation $h_i \tt h_1 \tt\dotsm\tt
h_{i-1}\tt h_{i+1} \tt\dotsm\tt h_{n+1} \mapsto h_1 \tt\dotsm\tt
h_{n+1}$, and $\si_{1 \to i, \, 2\to j}$ is the permutation $h_i
\tt h_j \tt h_1 \tt\dotsm\tt h_{i-1}\tt h_{i+1} \tt\dotsm\tt
h_{j-1}\tt h_{j+1} \tt\dotsm\tt h_{n+1} \mapsto h_1 \tt\dotsm\tt
h_{n+1}$.

In \eqref{diga} we also use the following conventions. If $a*b =
\tsum_i\, f_i \tt_H c_i \in H^{\tt 2}\tt_H M$ for $a\in L$, $b\in
M$, then for any $f\in H^{\tt n}$ we set:
\begin{displaymath}
a*(f\tt_H b) = \tsum_i\, (1\tt f) \, (\id\tt\De^{(n-1)})(f_i)
\tt_H c_i \in H^{\tt (n+1)} \tt_H M,
\end{displaymath}
where $\De^{(n-1)} =
(\id\tt\dotsm\tt\id\tt\De)\dotsm(\id\tt\De)\De \colon H \to H^{\tt
n}$ is the iterated comultiplication ($\De^{(0)} := \id$).
Similarly, if $\ga(a_1 \tt\dotsm\tt a_n) = \tsum_i\, g_i \tt_H v_i
\in H^{\tt n} \tt_H M$, then for $g\in H^{\tt 2}$ we set:
\begin{multline*}
\ga((g\tt_H a_1) \tt a_2 \tt\dotsm\tt a_n)
\\
= \tsum_i\, (g\tt 1^{\tt(n-1)}) \, (\De\tt\id^{\tt(n-1)})(g_i)
\tt_H v_i \in H^{\tt (n+1)} \tt_H M.
\end{multline*}
These conventions reflect the compositions of polylinear maps in
$\M^*(H)$, see \eqref{compoly3}. Note that \eqref{diga} holds also
for $n=0$ if we define $\De^{(-1)} := \ep$.

The fact that $d^2=0$ is most easily checked using
 the same argument as in the usual Lie algebra
case. The cohomology of the resulting complex $C^\bullet(L,M)$ is
called the {\em reduced cohomology of $L$ with coefficients in
$M$\/} and is denoted by $\coh^\bullet(L,M)$, (cf.\ \cite{BKV}).

\begin{remark} Note that the cocycle condition for the
cocommutator $\delta:L\to \wedge^2 L$ in the definition of  a
$H$-coalgebra is indeed the condition that $\delta$ is a 1-cocycle
of $L$ with coefficients in $\wedge^2 L$ in the reduced complex.
\end{remark}

%%%%%%%%%%%%%%%%%%%%%%%%%%
\subsection{Definitions and conformal CYBE}\label{subcybe}
%%%%%%%%%%%%%%%%%%%%%%%%%%

Among all the 1-cocycles of $L$ with values in $\wedge^2 L$, we
have  1-coboundaries $\delta$ that comes from the differential of
an element $r\in\wedge^2 L$, that is
\begin{equation}\label{eq:deltar}
\delta_r(a)= \left(d (1\otimes_H r)\right) (a),
\end{equation}
for all $a\in L$, cf. (\ref{diga0}).

\begin{definition} A {\it coboundary  Lie pseudo-bialgebra} is a triple $(L, [ \ * \ ], r)$,
 with $r\in L\otimes L$, such that  $(L, [ \ * \ ], \delta_r)$ is a  Lie pseudo-bialgebra.
 In this case, the element $r\in L\otimes L$ is said to be  a {\it coboundary structure}.
\end{definition}

Now, we can state one of the main results of this article.

\begin{theorem}\label{teo:dual} Let $L$ be a Lie pseudoalgebra
 and   $\mu: H\otimes (L\otimes L)\longrightarrow L\otimes L$  given by
$\mu(h\otimes m\otimes n)=\Delta (h)(m\otimes n)$. Let $r=\sum_i
a_i\otimes b_i\in L\otimes L$. The map $\delta_r:L\longrightarrow
L\otimes L$ given by ($a\in L$)
$$
\delta_r(a)=(d(1\otimes_H \,r))(a)=\sum_i \mu([a,a_i]\otimes
b_i+\sigma_{12}(a_i\otimes [a,b_i])),
$$
 is the
cocommutator of a Lie pseudo-bialgebra structure on $L$ if and
only if the following conditions are satisfied: \vskip .3cm

\noindent (1) the symmetric part of $r$ is $L$-invariant, that is:
$$
\delta_{r+r_{21}}(a)=0,
$$
where $r_{12}=\sum_i b_i\otimes a_i$. \vskip .3cm

\noindent (2) $$\mu_3(a\cdot[[r,r]])=0$$ where $\mu_3(h\otimes
m\otimes n\otimes p)=((\Delta\otimes 1)\Delta (h)) (m\otimes
n\otimes p)$, the dot action is the action analogous to the
bracket defined in (\ref{ab}) and

\begin{equation}\label{[[r,r]]}
[[r,r]]=\mu_{-1}^3([a_j,a_i]\otimes b_j\otimes b_i)
-\mu_{-2}^4(a_i\otimes[a_j,b_i]\otimes b_j) -\mu_{-3}^2(a_i\otimes
a_j\otimes[b_j,b_i]),
\end{equation}
\vskip .2cm

\noindent where $\mu_{-k}^l$ means that the element of $H$ that
appears in its argument in the $k$-th place acts via the antipode
on the element of $L$ located in the $l$-th entry.
\end{theorem}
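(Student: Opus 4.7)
The plan is to separate the requirements on $\delta_r$ into two pieces: the cocycle compatibility with the pseudobracket, and the co-Jacobi identity plus skew-symmetry making $(L,\delta_r)$ a Lie $H$-coalgebra. By construction $\delta_r = d(1 \otimes_H r)$ is a 1-coboundary in the reduced complex $C^\bullet(L, L \otimes L)$, and since $d^2=0$ (Section~\ref{subcohom}), it is automatically a 1-cocycle. As noted in the remark following that section, the 1-cocycle condition is exactly the compatibility between $[\,*\,]$ and $\delta_r$ required in the definition of a Lie pseudo-bialgebra. Therefore only two conditions remain to be analyzed: (i) $\delta_r(a) \in \wedge^2 L$ for every $a \in L$, and (ii) $\delta_r$ satisfies the co-Jacobi identity \eqref{co-jacobi-delta}.

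For (i), I would unravel the definition of $\delta_r$ as the image of $r$ under the adjoint action of $L$ on $L \otimes L$, transported to pseudoalgebra notation through the bracket $[\cdot,\cdot]$ of \eqref{ab} and the $\mu$ of the theorem. The two summands in the formula for $\delta_r(a)$ are precisely $a$ acting on the first and on the second tensor factor of $r$, and hence $\sigma_{12}\delta_r(a)$ is $a$ acting on $r_{21}$. Adding them gives $\delta_{r+r_{21}}(a)$, so the requirement $\delta_r(a) \in \wedge^2 L$ for all $a$ is exactly condition (1).

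For (ii), the strategy mimics the classical Lie bialgebra proof of Drinfeld: compute each of $(\id \otimes \delta_r)\delta_r(a)$, $\tau_{12}(\id \otimes \delta_r)\delta_r(a)$, and $(\delta_r \otimes \id)\delta_r(a)$ by expanding the outer $\delta_r$ according to its definition and then substituting the inner $\delta_r$. Each of the three expressions is a sum of terms of the form $\mu_3$ applied to nested brackets of $a$ with one of $a_i,b_i$ and one of $a_j,b_j$. The Jacobi identity of $L$, in the form \eqref{jacid} of Definition~\ref{dhconf1}, lets one reorganise any doubly nested bracket $[a,[a_i,a_j]]$ into the sum $[[a,a_i],a_j] + (\sigma\otimes\id)[a_i,[a,a_j]]$. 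After collecting and cancelling, the $a$-dependence factors out as a single iterated adjoint action, and what remains in the three interior slots assembles into the three summands of $[[r,r]]$ displayed in \eqref{[[r,r]]}. This yields the identity
\begin{equation*}
(\id\otimes\delta_r)\delta_r(a) - \tau_{12}(\id\otimes\delta_r)\delta_r(a) - (\delta_r\otimes\id)\delta_r(a) = \mu_3(a \cdot [[r,r]]),
\end{equation*}
so that co-Jacobi is equivalent to condition (2).

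The main obstacle is the bookkeeping. Because tensor products are over $H$ and the adjoint action is encoded through the bracket $[\cdot,\cdot] : L \otimes L \to H \otimes L$, every term that classically would live in $L^{\otimes 3}$ now carries additional $H^{\otimes 3}$ factors, which must be repackaged via the iterated coproduct $(\Delta \otimes \id)\Delta$ hidden in $\mu_3$. The three different antipode placements $\mu_{-1}^3$, $\mu_{-2}^4$, $\mu_{-3}^2$ in \eqref{[[r,r]]} arise from repeated applications of the identity \eqref{identity} of Lemma~\ref{lhhh}(c), which is exactly what one needs to move the $H$-factors produced by one bracket past the $L$-factor produced by another across $(H^{\otimes 3}) \otimes_H L^{\otimes 3}$. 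Once this translation dictionary between $[a,\cdot]$ and the $\mu_{-k}^l$ notation is set up carefully, the classical coboundary Lie bialgebra argument transports verbatim.
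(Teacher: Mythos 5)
Your plan is essentially the paper's proof: the cocycle condition is dispensed with because $\delta_r=d(1\otimes_H r)$ is a coboundary, skew-symmetry of $\delta_r$ is identified with condition (1), and co-Jacobi is matched against $\mu_3(a\cdot[[r,r]])$ by a term-by-term cancellation driven by the Jacobi identity \eqref{jacid} and the Fourier-transform bookkeeping you describe. One caveat: the displayed identity
\begin{equation*}
(\id\otimes\delta_r)\delta_r(a) - \tau_{12}(\id\otimes\delta_r)\delta_r(a) - (\delta_r\otimes\id)\delta_r(a) = \mu_3(a \cdot [[r,r]])
\end{equation*}
does not hold unconditionally; in the paper's computation several of the cross terms (those pairing a bracket $[a,a_i]$ against a $b_i$ with a bracket against an $a_j$ or $b_j$) cancel only after invoking the $L$-invariance of $r+r_{21}$, i.e.\ condition (1). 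So the correct statement is that, \emph{assuming} (1), co-Jacobi for $\delta_r$ is equivalent to (2); this is enough for the ``if and only if'' since (1) is forced by skew-symmetry in either direction, but your identity as written would not survive the cancellation without it.
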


\begin{proof}From now on we will use the following notation: For
$a$ and $b$ in $L$,
\begin{equation*}
 [a*b]=(h^{a,b}\otimes 1)\tt_H c_{a,b}\,\in (H\tt H)\tt_H L.
\end{equation*}
Since $r=\sum_i a_i\otimes b_i\in L\otimes L$, by
(\ref{eq:tensor})
\begin{eqnarray*}
a *  r= (h^{a,a_i}\otimes 1)\tt_H (c_{a,a_i}\tt b_i) +
(h^{a,b_i}\otimes 1)\tt_H (a_i\tt c_{a,b_i}).
\end{eqnarray*}
Thus
\begin{align}\label{jose}
\delta_r(a)&=(d(1\tt_Hr))(a)\nonumber\\
&= \left(h^{a,a_i}\cdot ( c_{a,a_i}\tt b_i) + h^{a,b_i}\cdot(
a_i\tt c_{a,b_i})\right)\nonumber\\
&=\mu\left( [a,a_i]\otimes b_i+\sigma_{12} (a_i\tt[a,b_i])\right),
\end{align}
where we set $\mu(h\otimes m\otimes n)=\Delta (h) (m\otimes n)$
for all $h\in H$ and $m$ and $n$ in $L$, and $[a,b]\in H\tt L$ is
the Fourier transform of $[a*b]$ (cf. (\ref{ab})).

It is clear that the skew-symmetry of $\delta$ is equivalent to
condition (1) in the statement of the theorem. Now
\begin{align*}
(\delta_r\tt \id)&\delta_r(a)=(\delta_r\tt \id)
\left(h^{a,a_i}\cdot ( c_{a,a_i}\tt b_i) + h^{a,b_i}\cdot( a_i\tt
c_{a,b_i})\right)\nonumber\\
&=  h_{(1)}^{a,a_i}\,\delta_r(c_{a,a_i})\tt h_{(2)}^{a,a_i}\,b_i +
h_{(1)}^{a,b_i}\delta_r( a_i)\tt
h_{(2)}^{a,b_i}c_{a,b_i}\nonumber\\
&=h^{a,a_i}\cdot(\delta_r(c_{a,a_i})\tt \,b_i) +
h^{a,b_i}\cdot(\delta_r( a_i)\tt
c_{a,b_i})\nonumber\\
&=h^{a,a_i}\cdot\left(h^{c_{a,a_i},a_j}\cdot(c_{c_{a,a_i},a_j}\tt
\,b_j)\tt b_i +h^{c_{a,a_i},b_j}\cdot(a_j\tt c_{c_{a,a_i},b_j})
\tt b_i\right)\nonumber\\
 &\qquad+ h^{a,b_i}\cdot\left(h^{a_i,a_j}\cdot(c_{a_i,a_j}\tt
\,b_j)\tt c_{a,b_i}+h^{a_i,b_j}\cdot(a_j\tt c_{a_i,b_j})
\tt c_{a,b_i}\right)\nonumber\\
&=\mu_1(\mu_2^{3,4}\left([[a,a_i],a_j]\tt b_j\tt b_i\right))
+\mu_2(\mu_3^{1,4}\left(a_j\tt[[a,a_i],b_j]\tt  b_i\right))\nonumber\\
&\qquad+\mu_3(\mu_1^{2,3}\left([a_i,a_j]\tt b_j\tt [a,b_i]\right))
+\mu_3(\mu_2^{1,3}\left(a_j\tt [a_i,b_j]\tt [a,b_i])\right),
\end{align*}
where $\mu_{k}^{r,s}$ means that the element of $H$ that appears
in its argument in the $k$-th place acts on the elements of $L\tt
L$ formed by the elements  in the $r$ and $s$-th entries, and then
relocated in its original places, omitting $k$-th place. For
example $\mu_{2}^{1,4}(m\tt f\tt n\tt p\tt g)=f_{(1)}m\tt n\tt
f_{(2)}p\tt g$, with $m$, $n$ and $p$ in $L$ and $f$, $g$ in $H$.
Similarly, $\mu_k$ represents the action of the element of $H$ in
the $k$-th place acting in the element of $L\tt L\tt L$ formed by
the remaining elements in its argument.

Now, we can write down the twelve terms in
$\sum_{c.p.}(\delta_r\tt \id)\delta_r(a)$, where $\sum_{c.p.}$
stands for cyclic permutations of the factors in $L\tt L\tt L$.
Namely
\begin{align}
\hbox{$\sum_{c.p.}$}(\delta_r\tt \id)\delta_r(a)&=\mu_1(\mu_2^{3,4}\left([[a,a_i],a_j]\tt b_j\tt b_i\right))\label{(1)}\\
&\qquad\qquad\qquad+\mu_2(\mu_3^{1,4}\left(a_j\tt[[a,a_i],b_j]\tt  b_i\right))\label{(2)}\\
&\qquad+\mu_3(\mu_1^{2,3}\left([a_i,a_j]\tt b_j\tt [a,b_i]\right))\label{(3)}\\
&\qquad\qquad\qquad+\mu_3(\mu_2^{1,3}\left(a_j\tt [a_i,b_j]\tt [a,b_i])\right)\label{(4)}\\
&\qquad+\mu_2(\mu_3^{4,5}\left(b_i\tt[[a,a_i],a_j]\tt b_j\right))\label{(9)}\\
&\qquad\qquad\qquad+\mu_3(\mu_4^{2,5}\left(b_i\tt a_j\tt[[a,a_i],b_j]\right))\label{(10)}\\
&\qquad+\mu_1(\mu_3^{4,5}\left([a,b_i]\tt [a_i,a_j] \tt b_j\right))\label{(11)}\\
&\qquad\qquad\qquad+\mu_1(\mu_4^{3,5}\left([a,b_i]\tt a_j \tt[a_i,b_j] )\right)\label{(12)}\\
&\qquad+\mu_3(\mu_4^{1,5}\left(b_j\tt b_i\tt[[a,a_i],a_j]  \right))\label{(5)}\\
&\qquad\qquad\qquad+\mu_1(\mu_2^{3,5}\left([[a,a_i],b_j]\tt b_i\tt a_j \right))\label{(6)}\\
&\qquad+\mu_2(\mu_4^{1,5}\left( b_j\tt [a,b_i]\tt[a_i,a_j]\right))\label{(7)}\\
&\qquad\qquad\qquad+\mu_2(\mu_1^{2,5}\left( [a_i,b_j]\tt
[a,b_i]\tt a_j)\right)\label{(8)}
\end{align}

On the other hand, by the skew commutativity in (\ref{ss}) we have
that
\begin{align}
\mu_3(a\cdot[[r,r]])&=\mu_3\left(a\cdot\left(\mu_{-1}^3([a_j,a_i]\otimes
b_j\otimes b_i) \right.\right.\nonumber\\
&\qquad\qquad-\left.\left.\mu_{-2}^4(a_i\otimes[a_j,b_i]\otimes
b_j) -\mu_{-3}^2(a_i\otimes
a_j\otimes[b_j,b_i]\right)\right)\nonumber\\
&=-\mu_1\left(\mu_2^{3,4} (\mathcal{F} \tt \id\tt\id\tt\id)([a,[a_i,a_j]]\tt b_j\tt b_i)\right)\label{1tilde}\\
&\qquad\qquad+\mu_2\left(\mu_1^{2,5}([a_i,a_j] \tt [a,b_i]\tt
b_j)\right)\label{2tilde}\\
&\qquad +  \mu_3\left(\mu_{-1}^{3}([a_j,a_i]\tt
b_j\tt[a,b_i])\right)\label{3tilde}\\
&\qquad\qquad+\mu_1\left(\mu_3^{4,5}([a,a_i]\tt[b_i,a_j]\tt b_j)\right) \label{4tilde}\\
&\qquad+\mu_2\left(\mu_{3}^{4,5}(\id\tt\mathcal{F}\tt\id\tt\id)(a_j\tt[a,[b_j,a_i]]\tt
b_i)\right)\label{5tilde} \\
&\qquad\qquad+ \mu_3\left(\mu_{-2}^1(a_j\tt[b_j,a_i]\tt
[a,b_i])\right)\label{6tilde}\\
&\qquad+\mu_1\left(\mu_{4}^{3,5}([a,a_i]\tt a_j\tt
[b_i,b_j])\right)\label{7tilde} \\
 &\qquad\qquad- \mu_3^{14}\left(a_i\tt (\id\tt \rho(b_i))(\mu_{1}^{2,3}([a,a_j]\tt b_j)\right)\label{8tilde}\\
&\qquad+\mu_3\left(\mu_{4}^{2,5}(\id\tt\id\tt\mathcal{F}\tt\id)(a_i\tt
a_j\tt [a,[b_i,b_j]])\right)\label{9tilde},
\end{align}
where $\mathcal{F}$ is the Fourier Transform defined in
(\ref{ftrans}) and $\rho(b)(a)=[a,b]$ for all $a$ and $b$ in $L$.
Now, the study of the sum of both equations is divided in several
steps.

First, observe that (\ref{(3)})$ +$ (\ref{3tilde})$=0$. Indeed,
using the skew-symmetry introduced in (\ref{ss}) , if
$[a_j,a_i]=h^{a_j,a_i}\tt c_{a_j,a_i}$  we get
\begin{align*}
\mu_3\left(\mu_{1}^{2,3}([a_i,a_j]\tt b_j\tt
[a,b_i])\right)&=-\mu_3\left(\mu_{1}^{2,3}(h^{a_j,a_i}_{(-1)}\tt
h^{a_j,a_i}_{(2)}c_{a_j,a_i}\tt b_j\tt
[a,b_i])\right) \nonumber\\
&= -\mu_3\left(h^{a_j,a_i}_{(-1)_{(1)}}\, h^{a_j,a_i}_{(2)}\,
c_{a_j,a_i}\tt h^{a_j,a_i}_{(-1)_{(2)}}\, b_j\tt
[a,b_i]\right) \nonumber\\
&=-\mu_3\left( c_{a_j,a_i}\tt S(h^{a_j,a_i})\, b_j\tt [a,b_i]\right)\nonumber\\
&=-  \mu_3\left(\mu_{-1}^{3}([a_j,a_i]\tt b_j\tt[a,b_i])\right)
\end{align*}
Similarly, we have (\ref{(4)})$+$(\ref{6tilde})$=0$.

Interchanging the indices $i$ and $j$ and using Jacobi identity
(\ref{jacid}), we have that (\ref{(1)})+(\ref{1tilde}) is
\begin{align} \label{eq:1+1}
& \mu_1(\mu_2^{3,4}\left([[a,a_i],a_j]\tt b_j\tt b_i\right))
-\mu_1\left(\mu_2^{3,4} (\mathcal{F} \tt
\id\tt\id\tt\id)([a,[a_i,a_j]]\tt b_j\tt b_i)\right)\nonumber\\
&\qquad\qquad =-
\mu_1\left(\mu_2^{3,4}\left((\mathcal{F}\circ\sigma \tt
\id\tt\id\tt\id)([a_i,[a,a_j]]\tt b_j\tt b_i)\right)\right)\nonumber\\
&\qquad\qquad=-\mu_1\left( \mu_2^{3,5}([[a,a_j],a_i]\tt b_j\tt
b_i\right).
\end{align}

Now, using the invariance property of part (1) of this theorem,
and (\ref{eq:1+1}), we obtain that (\ref{(6)})+ (\ref{eq:1+1}) is

\begin{align*}
&\mu_1(\mu_2^{3,5}\left([[a,a_i],b_j]\tt b_i\tt a_j
\right))-\mu_1\left( \mu_2^{3,5}([[a,a_j],a_i]\tt b_j\tt
b_i\right)\nonumber\\
&\qquad\qquad\qquad= \mu_1\left(\mu_2^{3,5}\left( [[a,a_j],b_i]\tt
b_j\tt a_i+[[a,a_j],a_i]\tt b_j\tt
b_i\right)\right)\nonumber\\
&\qquad\qquad\qquad=-\mu_3\left(\mu_4^{1,5}(b_i\tt
b_j\tt[[a,a_j],a_i] + a_i\tt
b_j\tt[[a,a_j],b_i])\right)\nonumber\\
&\qquad\qquad\qquad:=(A)+(B).
\end{align*}

\noindent It is easy to see that $(A)$ +(\ref{(5)})$=0$, hence it
remains to cancel $(B)$.

Now, recall that  $\rho(x)(y)=[y,x]$ for all $x$ and $y$ in $L$,
then using again the invariance of part (1), we get that
(\ref{(11)})+(\ref{4tilde}) is
\begin{align*}
&\mu_1(\mu_3^{4,5}\left([a,b_i]\tt [a_i,a_j] \tt
b_j\right))+\mu_1\left(\mu_3^{4,5}([a,a_i]\tt[b_i,a_j]\tt
b_j)\right) \nonumber\\
&\qquad\qquad\qquad=\mu_1\left(\mu_3^{4,5}(\id\tt\rho(a_j)\tt\id)([a,b_i]\tt
a_i\tt
b_j+[a,a_i]\tt b_i\tt b_j)\right)\nonumber\\
&\qquad\qquad\qquad=\mu_2^{3,4}\left\{\Big((\id\tt\rho(a_j))\mu_1^{2,3}\big([a,b_i]\tt
a_i+[a,a_i]\tt b_i\big)\Big)\tt b_j \right\}\nonumber\\
&\qquad\qquad\qquad=-\mu_2^{3,4}\left\{\Big((\id\tt\rho(a_j))\mu_2^{1,3}\big(b_i\tt
[a,a_i]+a_i\tt [a,b_i]\big)\Big)\tt b_j \right\}\nonumber\\
&\qquad\qquad\qquad=-\mu_2\left(\mu_3^{4,5}(b_i\tt[[a,a_i],a_j]\tt
b_j+a_i\tt[[a,b_i],a_j]\tt b_j)\right)\nonumber\\
&\qquad\qquad\qquad:=(D)+(C).
\end{align*}
It is obvious that $(D) +(\ref{(9)})=0$, hence it remains $(C)$.

Similarly, we have that (\ref{(12)})+(\ref{7tilde}) is

\begin{align*}
&\mu_1(\mu_4^{3,5}\left([a,b_i]\tt a_j \tt[a_i,b_j]
)\right)+\mu_1(\mu_4^{3,5}\left([a,a_i]\tt a_j\tt
[b_i,b_j]\right))\nonumber\\
&\qquad\qquad=\mu_1\left(\mu_4^{3,5}(\id\tt\id\tt\id\tt\rho(b_j))([a,b_i]\tt
a_j\tt
a_i+[a,a_i]\tt a_j\tt b_i)\right)\nonumber\\
&\qquad\qquad=\mu_3^{2,4}\left\{\Big((\id\tt\id\tt\rho(b_j))\mu_1^{2,4}\big([a,b_i]\tt
a_j\tt a_i+[a,a_i]\tt a_j\tt b_i\big)\Big)\right\}\nonumber\\
&\qquad\qquad=-\mu_3^{2,4}\left\{\Big((\id\tt\id\tt\rho(b_j))\mu_3^{1,4}\big(b_i\tt
a_j\tt[a.a_i]+a_i\tt a_j\tt [a,b_i]\big)\Big) \right\}\nonumber\\
&\qquad\qquad=-\mu_3\left(\mu_4^{2,5}(b_i\tt a_j\tt
[[a,a_i],b_j]+ a_i\tt a_j\tt [[a,b_i]b_j)]\right)\nonumber\\
&\qquad\qquad:=(F)+(E).
\end{align*}
and it is obvious that $(F) + (\ref{(10)})=0$, hence it remains
$(E)$. In a similar  way, it is easy to see that
\begin{align*}
\hbox{(\ref{(8)})+(\ref{2tilde})}&=
\mu_2\left(\mu_1^{2,5}\big([a_i,b_j]\tt[a,b_i]\tt a_j +
[a_i,a_j]\tt[a,b_i]\tt b_j\big)\right) \nonumber\\
&=-\mu_2\left(\mu_4^{1,5}\big(a_j\tt[a,b_i]\tt[a_i,b_j]+ b_j\tt
[a,b_i]\tt[a_i,a_j]\big)\right) \nonumber\\
&:=(H)+(G),
\end{align*}
and we have $(G)$+(\ref{(7)})$=0$, hence it remains $(H)$.

By a simple computation  it is easy to see that
(\ref{(2)})+(\ref{5tilde})+ $(C)=0$ by Jacobi identity
(\ref{jacid}). Now, we can write, using skew-symmetry and
invariance property,
\begin{align}\label{eq:8+A+H}
\hbox{(\ref{8tilde})}+(B)+(H)=\mu_3\Big(a_i\tt(\id\tt\rho(b_i))\mu_2^{1,3}(a_j\tt[a,b_j])\Big)
\end{align}

Finally, a simple computation shows that (\ref{9tilde}) + $(E)$ +
(\ref{eq:8+A+H})$\,=0 $ by Jacobi identity, and it is easy to
check that we have cancelled all the terms, finishing the proof.
\end{proof}

\begin{definition} \label{def:quasi} A {\it quasitriangular  Lie pseudo-bialgebra} is a
coboundary Lie bialgebra  $(L, [\,*\, ] , r)$ with $r\in L\otimes
L$ such that

\vskip .3cm

\begin{enumerate}
\item $[[r,r]]=0,\;$
mod$(H_+\cdot(L\tt L\tt L))$, where $H_+$ is the augmentation
ideal,

\,

\item $r$ is $L$-invariant, namely $\delta_{r+r_{21}}(a)=0$.
 \end{enumerate}
\end{definition}

\vskip 1cm

%%%%%%%%%%%%%%%%%%%%%%%%%%
\section{Pseudo Manin triples}\label{submanin}
%%%%%%%%%%%%%%%%%%%%%%%%%%

\vskip .5cm

Let $V$ be a $H$-module. A {\it  bilinear pseudo-form} on $V$ is a
$\mathbf{k}$-bilinear map $\< \  , \  \> : V\times V\to (H\otimes
H)\otimes_H \kk$ such that
\begin{eqnarray*}
\<h v,w\> &=& ((h\otimes 1)\otimes_H 1)\< v,  w\> \\
 \< v,hw\> &=& ((1\otimes h)\otimes_H 1)\< v,  w\>\qquad\hbox{ for all
} v,w\in V, h\in H.
\end{eqnarray*}

\noindent We call a  bilinear pseudo-form  {\it symmetric }  if
\begin{equation*}
\< v,w\>= (\sigma\otimes_H 1)   \< w , v\>\qquad\hbox{ for all }
v,w\in V.
\end{equation*}

\noindent A  bilinear pseudo-form in a  Lie pseudoalgebra $L$ is
called {\it invariant} if
\begin{equation} \label{eq:inv}
\< [a* b],c \>=    \< a , [b*c]\>
\end{equation}
for all $a,b,c\in L$, where the usual composition rules of
polylinear maps are applied in (\ref{eq:inv}).

\vskip .2cm

Given a   bilinear pseudo-form on a $H$-module $V$, we have a
homomorphism of  $H$-modules,  $\phi : V\to
V^{*}=\hbox{Chom}(V,\mathbf{k})$, $v\mapsto \phi_v$, given as
usual by
\begin{equation*}
(\phi_v)* w= \< v, w\>,  \qquad v\in V.
\end{equation*}

Now, suppose that a bilinear pseudo-form satisfies that  $\< v,w
\> =0$ for all $w\in V$, implies $v=0$. Then $\phi$ gives an
injective map between $V$ and $ V^{*}$, but not necessarily
surjective.

Following \cite{L}, a
 bilinear pseudo-form is called {\it
non-degenerate} if $\phi$ gives an isomorphism between $V$ and
$V^{*}$.

\begin{definition} A (finite rank) {\it pseudo Manin triple} is a triple of finite
rank Lie pseudoalgebras $(L, L_1, L_0)$, where $L$ is equipped
with a non-degenerate invariant symmetric bilinear pseudo-form
$\<\ ,\ \>$ such that

\vskip .2cm

1. $L_1, L_0$ are Lie pseudosubalgebras of $L$ and $L=L_0\oplus
L_1$ as $H$-module.

\vskip .2cm

2. $L_0$ and $L_1$ are isotropic with respect to $\<\ ,\ \>$, that
is $\<L_i ,L_i \>=0$ for $i=0,1$.
\end{definition}

\begin{theorem} \label{th:manin}
Let $L$ be a Lie pseudoalgebra free of finite rank. Then there is
a one-to-one correspondence between Lie pseudo-bialgebra
structures on $L$ and pseudo Manin triples $(R, R_1, R_0)$ such
that $R_1=L$.
\end{theorem}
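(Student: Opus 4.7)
The argument mirrors the classical Manin triple theorem for Lie bialgebras, with the structures on $L$ and $L^*$ being transported by Theorem \ref{duality-thm}. For the forward direction, given a Lie pseudo-bialgebra $(L, [\cdot\,*\,\cdot], \delta)$ with $L$ free of finite rank, I take $R = L\oplus L^*$ as an $H$-module, where $L^* = \Chom(L,\mathbf{k})$ carries the Lie pseudoalgebra structure produced by Theorem \ref{duality-thm}(b) applied to $(L,\delta)$. The pseudo-form on $R$ is the natural pairing $\langle a, f\rangle := f*a \in (H\otimes H)\otimes_H \mathbf{k}$ for $a\in L$, $f\in L^*$, extended by the symmetry axiom and by $\langle L, L\rangle = 0 = \langle L^*, L^*\rangle$. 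Symmetry and $H$-bilinearity are built in, while non-degeneracy is equivalent to the canonical isomorphism $L\simeq L^{**}$, which holds because $L$ is free of finite rank (Proposition \ref{prop:ddd}).

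The bracket on $R$ is defined piecewise: on $L$ and on $L^*$ it coincides with the existing Lie pseudoalgebra brackets, while for mixed terms $a\in L$ and $f\in L^*$ I set
\[
[a*f] \;=\; a\cdot f \;-\; ((\sigma\otimes\mathrm{id})\otimes_H\mathrm{id})\,(f\cdot a),
\]
where $a\cdot f \in (H\otimes H)\otimes_H L^*$ is the $L$-module action on $L^*$ from Remark \ref{ractchom} (formula \eqref{actchom}), and $f\cdot a \in (H\otimes H)\otimes_H L$ is the analogous $L^*$-module action on $L$ induced by the Lie pseudoalgebra structure on $L^*$. Skew-symmetry and $H$-bilinearity of $[\cdot*\cdot]$ on $R$ are immediate from the definition, and invariance of $\langle\cdot,\cdot\rangle$ in the sense of \eqref{eq:inv} follows directly from \eqref{actchom}.

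The heart of the proof is verifying the Jacobi identity on $R$, which I would split into cases according to how many arguments lie in $L^*$. The all-$L$ and all-$L^*$ cases are the Jacobi identities for $L$ and $L^*$ respectively. The cases with one argument in $L^*$ and two in $L$ (and symmetrically) reduce, after expanding the mixed bracket, to Remark \ref{ractchom}, which says that $L^*$ is an $L$-module and $L$ is an $L^*$-module. The crucial case, and the main obstacle, is matching the crossed terms in which both the $L$-action on $L^*$ and the $L^*$-action on $L$ appear simultaneously; here the cocycle condition
\[
a*\delta(b) \,-\, (\sigma\otimes_H 1)\,b*\delta(a) \;=\; \delta([a*b])
\]
is the precise identity needed to cancel the extra contributions. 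Unwinding this cancellation will require Fourier-transform manipulations analogous to those in the proof of Theorem \ref{duality-thm}(b).

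For the converse, given a pseudo Manin triple $(R,R_1,R_0)$ with $R_1 = L$, the non-degenerate invariant pairing yields an isomorphism $R \simeq R^*$; since $R_0$ and $R_1 = L$ are isotropic complements, its restriction produces an isomorphism $R_0 \simeq L^*$ of $H$-modules. I transport the Lie pseudoalgebra structure from $R_0$ to $L^*$ and apply Theorem \ref{duality-thm}(a) to obtain a cocommutator $\delta:L\to \wedge^2 L$. The cocycle condition for $\delta$ is then precisely the mixed Jacobi identity in $R$ read backwards, while invariance of the pairing guarantees that the $L$-action on $L^*$ and the $L^*$-action on $L$ read off from the ambient bracket agree with those coming from Remark \ref{ractchom}. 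These two constructions are inverse to each other essentially by construction, proving the claimed bijection.
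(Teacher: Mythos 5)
Your construction is the same as the paper's: $R=L\oplus L^*$ with $L^*$ carrying the dual pseudoalgebra structure from Theorem~\ref{duality-thm}, the evaluation pairing made symmetric and isotropic on the two summands, and the mixed bracket given by the two coadjoint actions. The paper arrives at that mixed bracket by the equivalent route of deriving it from invariance of the form (formula \eqref{eq: manin-bkt}); writing it directly as $a\cdot f$ minus the transposed $f\cdot a$ via \eqref{actchom} is the same object, and your identification of which axiom is responsible for which piece of the Jacobi identity (module axioms for the "pure" coadjoint terms, the cocycle condition for the crossed terms, read backwards for the converse) is exactly how the paper's computation decomposes.

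The one substantive shortfall is that the heart of the argument is asserted rather than carried out. In this setting the statement "the cocycle condition is the precise identity needed to cancel the extra contributions" is not a one-line observation: the paper's proof consists almost entirely of the explicit basis expansion \eqref{eq: manin-jacobi} of the mixed Jacobi identity and the verification that the $e$-components match the expanded cocycle condition \eqref{eq:cojacobi-manin} \emph{only after} the nontrivial identification \eqref{identif-manin}, which moves a tensor factor across $\tt_H$ at the cost of an antipode. That antipode twist, and the attendant Fourier-transform bookkeeping with terms like $(l^{js}_i)_{(-2)}$ and $h^{js}_i(l^{js}_i)_{(-1)}$, is where a computation of this kind can genuinely fail, and your proposal defers it entirely ("will require Fourier-transform manipulations analogous to..."). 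So the strategy is correct and matches the paper, but as written the proof is an outline: to be complete you must actually expand the mixed Jacobi identity in an $H$-basis (or in a basis-free form) and exhibit the identification under which it becomes the cocycle condition, as the paper does.
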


\begin{proof} Given  a Lie  pseudo-bialgebra $L$, we construct a pseudo
Manin triple in the following way: we set $R_1=L$, $R_0=L^{*}$
with the Lie pseudoalgebra structure given by the dual of the
coalgebra structure in $L$, $R=L\oplus L^{*}$, and take the
non-degenerate symmetric  bilinear pseudo-form in $R$ given by
\begin{displaymath}
\< a+f , b+g \> =f*(b)+ (\sigma\tt_H\id)g*(a),
\end{displaymath}
for all $a,b\in L$ and $f,g\in L^*$. Now, observe that the
invariance of the bilinear form uniquely determines the bracket on
$L\oplus L^{*}$, namely: let $\{e_i\}_{i=1}^n$ be an $H$-basis of
$R_1$ and let $\{e_i^*\}_{i=1}^n$ be the corresponding dual basis
in $R_0\simeq R_1^{*}$ and set
\begin{equation*}
[e_i*e_j]=\sum_k (h_k^{ij}\tt l_k^{ij})\tt_H e_k\quad\hbox{ and }
\quad [e_i^**e_j^*]=\sum_k (f_k^{ij}\tt g_k^{ij})\tt_H e_k^*.
\end{equation*}
Due to the invariance of the bilinear form, we have
\begin{align*}
\langle[e_i^**e_j],e_l\rangle&= \langle e_i^*,[e_j*e_l]\rangle
=\sum _k (1\tt h_k^{jl}\tt l_k^{jl})\tt_H \delta_{ik}\nonumber\\
&=(1\tt h_i^{jl}\tt l_i^{jl})\tt_H 1 = \left((l_i^{jl})_{(-2)}\tt
h_i^{jl}(l_i^{jl})_{(-1)}\tt 1\right)\tt_H
1\nonumber\\
&=\left\langle  \sum_s ((l_i^{js})_{(-2)}\tt
h_i^{js}(l_i^{js})_{(-1)})\tt_H e^*_s, e_l \right\rangle,
\end{align*}
and
\begin{align*}
\langle[e_j*e_i^*],e_k^*\rangle&= \langle e_j,[e_i^**e_k^*]\rangle
=\sum _l (1\tt f_l^{ik}\tt g_l^{ik})\tt_H \delta_{jl}\nonumber\\
&=(1\tt f_j^{ik}\tt g_j^{ik})\tt_H 1 =\left((g_j^{ik})_{(-2)}\tt
f_j^{ik} (g_j^{ik})_{(-1)}\tt 1\right)\tt_H
1\nonumber\\
&=\left\langle \sum_s ((g_j^{is})_{(-2)}\tt f_j^{is}
(g_j^{is})_{(-1)})\tt_H e_s,e_k^*\right\rangle.
\end{align*}
Hence, using skew-symmetry, we have
\begin{align}\label{eq: manin-bkt}
[e_i^**e_j]&= \sum_s ((l_i^{js})_{(-2)}\tt
h_i^{js}(l_i^{js})_{(-1)})\tt_H e^*_s\\
&\hskip .5cm -\sum_r (f_j^{ir}
(g_j^{ir})_{(-1)}\tt(g_j^{ir})_{(-2)}) \tt_H e_r.\nonumber
\end{align}
It remains to show that this is indeed a Lie pseudoalgebra
 bracket. Let us first check Jacobi identity, namely we have to
 show that
 \begin{align*}
 0=[e_p^**[e_i*e_j]]-((\sigma\tt\id)\tt_H\id)[e_i*[e_p^**e_j]]-[[e_p^**e_i]*e_j].
 \end{align*}
 together with a similar relation involving two $e^*\,'s$ and one $e$. Expanding it,
 using (\ref{eq: manin-bkt}) and the composition rules
(\ref{abc*3'}) and (\ref{abc*6'}), we get
\begin{align}\label{eq: manin-jacobi}
0&=  \sum_{s,k} \left[(l_p^{ks})_{(-1)}\tt
h_k^{ij}(h_p^{ks}(l_p^{ks})_{(-2)})_{(1)} \tt
l_{k}^{ij}(h_p^{ks}(l_p^{ks})_{(-2)})_{(2)}\right]\tt_H e_s^*\nonumber\\
& -\sum_{k,r} \left[f_k^{pr}(g_k^{pr})_{(-1)}\tt
h_k^{ij}{(g_k^{pr})_{(-2)}}_{(1)}\tt
l_k^{ij}{(g_k^{pr})_{(-2)}}_{(2)}\right]\tt_H e_r\nonumber\\
&+\sum_{s,n} \left[(l_p^{js})_{(-1)}{(l_s^{in})_{(-1)}}_{(1)}\tt
h_s^{in}(l_s^{in})_{(-2)} \tt
h_p^{js}(l_p^{js})_{(-2)}{(l_{s}^{in})_{(-1)}}_{(2)}\right]\tt_H e_n^*\nonumber\\
&-\sum_{m,s}\left[(l_p^{js})_{(-1)}
(f_i^{sm}(g_i^{sm})_{(-1)})_{(1)}\tt (g_i^{sm})_{(-2)}\tt
h_p^{js}(l_p^{js})_{(-2)}(f_i^{sm}(g_i^{sm})_{(-1)})_{(2)}\right]\tt_H
e_m\nonumber\\
&+\sum_{r,m} \left[f_j^{pr}(g_j^{pr})_{(-1)}(l_m^{ir})_{(1)}\tt
h_m^{ir}\tt (g_j^{pr})_{(-2)}(l_m^{ir})_{(2)}\right]\tt_H e_m \nonumber\\
&+\sum_{s,n} \left[(l_p^{is})_{(-1)}{(l_s^{jn})_{(-1)}}_{(1)}\tt
h_p^{is}(l_p^{is})_{(-2)}{(l_{s}^{jn})_{(-1)}}_{(2)}\tt
h_s^{jn}(l_s^{jn})_{(-2)}\right] \tt_H e_n^*\nonumber\\
&-\sum_{m,s}\left[(l_p^{is})_{(-1)}(f_j^{sm}(g_j^{sm})_{(-1)})_{(1)}\tt
h_p^{is}(l_p^{is})_{(-2)}(f_j^{sm}(g_j^{sm})_{(-1)})_{(2)}\tt
(g_j^{sm})_{(-2)} \right] \tt_H
e_m\nonumber\\
&+\sum_{r,m} \left[ f_i^{pr}(g_i^{pr})_{(-1)}(l_m^{jr})_{(1)}\tt
 (g_i^{pr})_{(-2)}(l_m^{jr})_{(2)}\tt h_m^{jr}\right]\tt_H e_m.\nonumber\\
\end{align}

The coefficients of $e^*$  in (\ref{eq: manin-jacobi}) gives a
relation equivalent to the Jacobi identity of $L$, and it is easy
to see (after renaming some variables) that the coefficients of
$e$ in (\ref{eq: manin-jacobi}) gives a relation equivalent to
(\ref{eq:cojacobi-manin}) which is up to the identification
\begin{align}\label{identif-manin}
H\tt H\tt H\tt_H H &\longrightarrow H\tt H\tt_H H\tt H \nonumber\\
f\tt l\tt h\tt_H g &\;\mapsto\; l\tt h\tt_H S(f)\tt g
\end{align}
the 1-cocycle condition of the cobracket in $L$ (see below
(\ref{eq:cojacobi-manin})). In a similar way, the other Jacobi
identity in $L\oplus L^*$ is equivalent to
(\ref{eq:cojacobi-manin}) and the Jacobi identity of $L^*$.

\

Conversely, let $(R,R_1,R_0)$ be a pseudo Manin triple. The
non-degenerate pseudo-form $\<\ ,\ \>$ induces a non-degenerate
pairing $R_0\otimes R_1\to H$ that produce  an isomorphism
$R_1^{*}\simeq R_0$ as $H$-modules, and hence a Lie pseudoalgebra
structure on $R_1^{*}$. Denote by $\delta$ the Lie coalgebra
structure induced on $R_1$ by Theorem \ref{duality-thm}. We have
to show that $(R_1, [\ * \ ], \delta)$ is a Lie pseudo-bialgebra
and hence $R_0$ is its dual Lie pseudo-bialgebra. Therefore, we
have to check the cocycle condition
\begin{equation}\label{eq:m7}
0= \delta([a* b])-(\sigma\tt_H\id)\,(b*\delta (a))-a*\delta (b) .
\end{equation}
In order to do this, let $\{e_i\}_{i=1}^n$ be an $H$-basis of
$R_1$ and let $\{e_i^*\}_{i=1}^n$ be the dual basis in $R_0\simeq
R_1^{*}$. Set, as before,
\begin{displaymath}
[e_i \ *\  e_j]=\sum_s (h_s^{ij}\tt l^{ij}_s)\tt_H e_s,
\quad\hbox{and} \qquad [e_i^* \ * \ e_j^*]=\sum_s (f_s^{ij}\tt
g^{ij}_s)\tt_H e_s^*.
\end{displaymath}
By definition (see Theorem~\ref{duality-thm}),
\begin{displaymath}
\delta(e_k)=\sum_{i,j}  S(h_k^{ij})e^i\otimes S(l_k^{ij})e^j .
\end{displaymath}
Thus, we have
\begin{align}\label{manin:10}
\delta([e_j*e_l])&=\sum_{r,s,t} (h^{jl}_r\tt l^{jl}_r )\tt_H
(S(f_r^{st})e_s\tt S(g_r^{st})e_t)\nonumber\\
&= \sum_{r,s,t}\big((h^{jl}_r\tt l^{jl}_r )\tt_H (S(f_r^{st})\tt
S(g_r^{st}))\big) \left((1\tt 1)\tt_H (e_s\tt e_t)\right).
\end{align}
On the other hand
\begin{align}\label{manin:11}
&e_j*\delta(e_l)=\sum_{k,t,s}
\left(h_k^{js}\left(S(f_l^{st})l_k^{js}\right)_{(-1)}\tt
1\right)\tt_H \left(\left(
S(f_l^{st})l_k^{js}\right)_{(2)}e_k\tt S(g_l^{st}) e_t\right)\nonumber\\
&\qquad\qquad\qquad + \sum_{n,t,s}
\left(h_n^{jt}\left(S(g_l^{st})l_n^{jt}\right)_{(-1)}\tt
1\right)\tt_H \left(S(f_l^{st}) e_s \tt \left(
S(g_l^{st})l_n^{jt}\right)_{(2)}e_n\right)\nonumber\\
&\qquad=\sum_{k,t,s} \Big(
h_k^{js}\left(S(f_l^{st})l_k^{js}\right)_{(-1)}\tt
1\Big)\tt_H\Big(\left( S(f_l^{st})l_k^{js}\right)_{(2)}e_k \tt
S(g_l^{st})e_t\Big)\\
& \qquad\qquad + \sum_{n,t,s}
\left(h_n^{jt}\left(S(g_l^{st})l_n^{jt}\right)_{(-1)}\tt
1\right)\tt_H \left(S(f_l^{st}) e_s \tt \left(
S(g_l^{st})l_n^{jt}\right)_{(2)}e_n\right),\nonumber
\end{align}
and
\begin{align} \label{manin:12}
&(\sigma\tt_H\id)(e_j*\delta(e_j))=\nonumber\\
&=\sum_{k,t,s} \Big(1 \tt
h_k^{ls}\left(S(f_j^{st})l_k^{ls}\right)_{(-1)}
\Big)\tt_H\Big(\left(
S(f_j^{st})l_k^{js}\right)_{(2)}e_k\tt S(g_j^{st}) e_t\Big)\nonumber\\
&\qquad + \sum_{n,t,s} \left(1 \tt
h_n^{lt}\left(S(g_j^{st})l_n^{lt}\right)_{(-1)} \right)\tt_H
\left(S(f_j^{st}) e_s \tt \left(
S(g_j^{st})l_n^{lt}\right)_{(2)}e_n\right)\nonumber\\
&=\sum_{k,t,s}\Big( 1 \tt
h_k^{ls}\left(S(f_j^{st})l_k^{js}\right)_{(-1)}
\Big)\tt_H\Big(\left( S(f_j^{st})l_k^{ls}\right)_{(2)}e_k\tt
S(g_j^{st})e_t\Big)\nonumber\\
&\qquad + \sum_{n,t,s} \Big(1\tt
h_n^{lt}\left(S(g_j^{st})l_n^{lt}\right)_{(-1)}\Big) \tt_H \Big(
S(f_j^{st})e_s \tt \left(
S(g_j^{st})l_n^{lt}\right)_{(2)}e_n\Big).
\end{align}
By taking the coefficients of $(1\tt 1)\tt_H (e_p \otimes e_q)$ in
(\ref{manin:10}), (\ref{manin:11}) and (\ref{manin:12}), the
cocycle condition (\ref{eq:m7}) become (after renaming subindexes)
\begin{align}\label{eq:cojacobi-manin}
\sum_{r} (h^{jl}_r\tt l^{jl}_r )\tt_H &(S(f_r^{pq})\tt
S(g_r^{pq}))=
\\
&\, =\sum_{s}
\big(h_p^{js}\left(S(f_l^{sq})l_p^{js}\right)_{(-1)}\tt
1\big)\tt_H \big(\left(
S(f_l^{sq})l_p^{js}\right)_{(2)}\tt S(g_l^{sq})\big)\nonumber\\
&\, + \sum_{r}
\big(h_t^{jr}\left(S(g_l^{pr})l_t^{jr}\right)_{(-1)}\tt
1\big)\tt_H \big(S(f_l^{pr})  \tt \left(
S(g_l^{pr})l_t^{jr}\right)_{(2)}\big)\nonumber\\
&\,-\sum_{s} \big(1 \tt
h_p^{ls}\left(S(f_j^{sq})l_p^{js}\right)_{(-1)}\big) \tt_H
\big(\left(
S(f_j^{sq})l_p^{ls}\right)_{(2)}\tt S(g_j^{sq})\big)\nonumber\\
&\, - \sum_{r} \big(1\tt
h_i^{lr}\left(S(g_j^{pr})l_i^{lr}\right)_{(-1)}\big) \tt_H
\big(S(f_j^{pr}) \tt \left(
S(g_j^{pr})l_i^{lr}\right)_{(2)}\big).\nonumber
\end{align}
which is equivalent under the identification (\ref{identif-manin})
to the coefficients of $e_t$ in (\ref{eq: manin-jacobi}), that is,
the Jacobi identity on $R=R_1\oplus R_0\simeq R_1\oplus R_1^{*}$,
finishing the proof.
\end{proof}

\vskip 1cm

%%%%%%%%%%%%%%%%%%%%%%%%%%%%%%%%
\section{Pseudo Drinfeld's double}\label{subdrinf}
%%%%%%%%%%%%%%%%%%%%%%%%%%

\vskip .5cm

The correspondence between pseudo-bialgebras and pseudo Manin
triples gives us a Lie pseudoalgebra structure on $L\oplus L^{*}$
if $L$ is a pseudo-bialgebra. In fact, a more general result is
true.

\begin{theorem} Let $L$ be a free finite rank Lie pseudo-bialgebra
and let\break $(L\oplus L^{*}, L , L^{*})$ be the associated
pseudo Manin triple. Then there is a canonical  Lie
pseudo-bialgebra structure on $L\oplus L^{*}$ such that the
inclusions
\begin{displaymath}
L\hookrightarrow L\oplus L^{*} \hookleftarrow (L^{*})^{\rm op}
\end{displaymath}
into  the two summands are homomorphisms of Lie pseudo-bialgebras,
that is $\delta_{L\oplus L^{*}} =\delta_{L}-\delta_{ L^{*}}$.
Moreover, ${L\oplus L^{*}}$ is a quasitriangular Lie
pseudo-bialgebra.
\end{theorem}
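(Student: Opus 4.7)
The plan is to exhibit a canonical element $r\in (L\oplus L^*)\otimes(L\oplus L^*)$ and show that the coboundary $\delta_r$ provides the required quasitriangular Lie pseudo-bialgebra structure. Concretely, fix an $H$-basis $\{e_i\}_{i=1}^n$ of $L$ with dual basis $\{e^i\}_{i=1}^n$ in $L^*=\Chom(L,\kk)$ as in Theorem~\ref{duality-thm}, and set
\begin{displaymath}
r \;=\; \sum_{i} e_i \otimes e^i \;\in\; L\otimes L^{*}\;\subset\; R\otimes R, \qquad R:=L\oplus L^{*}.
\end{displaymath}
The Lie pseudoalgebra structure on $R$ is already furnished by the Manin triple construction of Theorem~\ref{th:manin}, the mixed bracket being given explicitly by~(\ref{eq: manin-bkt}). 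The goal is therefore to verify the two hypotheses of Definition~\ref{def:quasi} for $(R,r)$ and then to identify $\delta_r$ with $\delta_L$ on $L$ and with $-\delta_{L^*}$ on $L^{*}$; Theorem~\ref{teo:dual} will then guarantee that $\delta_r$ defines a Lie pseudo-bialgebra structure on $R$, and the prescribed restrictions immediately show that the two inclusions $L\hookrightarrow R\hookleftarrow (L^{*})^{\mathrm{op}}$ are homomorphisms of pseudo-bialgebras.

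For the invariance condition $\delta_{r+r_{21}}(x)=0$, I would exploit that $r+r_{21}=\sum_i(e_i\otimes e^i+e^i\otimes e_i)$ is precisely the element representing the canonical non-degenerate pseudo-form $\langle\,,\,\rangle$ on $R$ produced by the Manin triple, and that by Theorem~\ref{teo:dual}(1) the vanishing of $\delta_{r+r_{21}}$ is equivalent to the invariance of that pseudo-form under the bracket of $R$, which is a defining property of a pseudo Manin triple. For the pseudo CYBE $[[r,r]]\equiv 0$ modulo $H_{+}\cdot R^{\otimes 3}$, I would substitute $r$ into formula~(\ref{[[r,r]]}), unfold the three summands using~(\ref{eq: manin-bkt}), and check that the cancellations are forced by the same Jacobi identity~(\ref{eq: manin-jacobi}) on $R$ that was established in the proof of Theorem~\ref{th:manin}; the reduction mod $H_{+}$ is what allows the antipode-twisted tensor factors to collapse via~(\ref{antip}) and~(\ref{cou2}).

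To match $\delta_r$ with $\delta_L-\delta_{L^*}$, I would apply the defining formula of Theorem~\ref{teo:dual} to $r$: for $a\in L$,
\begin{displaymath}
\delta_r(a)\;=\;\sum_i \mu\bigl([a,e_i]\otimes e^i\bigr)\;+\;\sum_i \mu\bigl(\sigma_{12}(e_i\otimes[a,e^i])\bigr).
\end{displaymath}
The first sum lies in $L\otimes L^{*}$ and, after contracting against the dual basis via~(\ref{AAA}) and~(\ref{dualbasis}), produces the identity on $L$ coupled with the structure constants. The second sum uses the mixed bracket~(\ref{eq: manin-bkt}) applied to $[a*e^i]$ via skew-commutativity; its $L\otimes L^{*}$ component cancels against the first sum by the definition of the dual basis, while its $L^{*}\otimes L^{*}$ component reassembles, by direct comparison of structure constants $h_k^{ij},l_k^{ij}$ of $L$ with the expression~(\ref{delta}), into $\delta_L(a)\in\wedge^2 L^{*}\subset\wedge^2 R$, which under the identification $L\simeq (L^*)^*$ is $\delta_L(a)\in\wedge^2 L^{*}\subset\wedge^2 R$. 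The symmetric calculation with $f\in L^*$ yields $-\delta_{L^{*}}(f)$, the sign arising from the ordering $e_i\otimes e^i$ in $r$ versus its reversal needed to expose the cocommutator of $L^{*}$.

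The main obstacle is the bookkeeping in the CYBE verification: each of the three terms in~(\ref{[[r,r]]}) becomes a double sum over $i,j$ of elements in $(H\otimes H\otimes H)\otimes_H R^{\otimes 3}$, each carrying iterated antipodes and coproducts coming from~(\ref{eq: manin-bkt}). The pairwise cancellations will mirror, term by term and in dual form, the computation~(\ref{eq: manin-jacobi}), with cocommutativity of $H$ and the $H$-sesqui-linearity properties of the bracket handling the Hopf-algebra bookkeeping; reducing modulo $H_{+}\cdot R^{\otimes 3}$ at each step (so that $\varepsilon$ kills all but the leading factor in each $\De$-expansion) is what turns the identity into a finite index-matching check equivalent to the Jacobi identity already proved on $R$.
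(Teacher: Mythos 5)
Your proposal follows essentially the same route as the paper: the same canonical element $r=\sum_i e_i\otimes e^i$ corresponding to the identity in $\Chom(L,L)\simeq L\otimes L^*$, the same identification $\delta_r=d(1\otimes_H r)=\delta_L-\delta_{L^*}$ via the mixed bracket (\ref{eq: manin-bkt}), and the same two verifications from Definition~\ref{def:quasi}. Three details in your sketch need correcting, though none is fatal to the plan. First, the $L\otimes L$ component (not an ``$L^*\otimes L^*$ component'') of $\sum_i\mu(\sigma_{12}(e_i\otimes[a,e^i]))$ is what reassembles into $\delta_L(a)$, which lives in $\wedge^2 L$, not $\wedge^2 L^*$; as written your sentence is circular and misplaces the target of $\delta_L$. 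Second, the cancellation in $[[r,r]]$ does not come from the Jacobi identity (\ref{eq: manin-jacobi}): in the paper's computation two of the four expanded terms of (\ref{[[r,r]]}) cancel pairwise by skew-symmetry of $[e^*\ast e^*]$ after relabeling indices, and the remaining two combine, via a counit manipulation, into an element visibly lying in $H_+\cdot(L\oplus L^*)^{\otimes 3}$ --- so the mechanism is skew-symmetry plus the augmentation ideal, not Jacobi, and aiming at (\ref{eq: manin-jacobi}) would send you in circles. Third, Theorem~\ref{teo:dual}(1) merely \emph{defines} $L$-invariance of the symmetric part as $\delta_{r+r_{21}}=0$; the equivalence you invoke between this and the invariance of the pseudo-form on $R$ is plausible but is nowhere established in the paper, which instead checks $\mu_2(e_i\ast(r+r_{21}))=0$ by direct computation --- you would need either to prove that equivalence or to do the computation.
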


The Lie pseudo-bialgebra ${L\oplus L^{*}}$ is called the {\it
pseudo Drinfeld double} of $L$ and is denoted by $\D L$.

\begin{proof}Let $\{e_i\}_{i=1}^n$  be an $H$-basis of $L$ and let
$\{e_i^*\}_{i=1}^n$ be the corresponding dual basis in $ L^{*}$.
Suppose, as before, that
\begin{displaymath}
[e_i \ *\  e_j]=\sum_s (h_s^{ij}\tt l^{ij}_s)\tt_H e_s,
\quad\hbox{and} \qquad [e_i^* \ * \ e_j^*]=\sum_s (f_s^{ij}\tt
g^{ij}_s)\tt_H e_s^*.
\end{displaymath}

Let $r=\sum_{i=1}^n e_i\otimes e_i^*\in L\otimes L^{*}\subset \D
L\otimes\D L$ be the canonical element corresponding to $\mathcal
I\in$Chom$(L,L)\simeq L\otimes L^{*}$ (see
Proposition~\ref{prop:ddd}), where $\mathcal I (a)=(1\tt 1)\tt_H
a$. Now, let us verify that $\delta_{L\oplus L^{*}}
:=\delta_{L}-\delta_{ L^{*}}= d (1\tt_H r)$. Using (\ref{eq:
manin-bkt}) and (\ref{jose}), we have

\vskip  .3cm

\begin{align*}
(d(1\tt_H r))*e_j&=\sum_{i,k}
\left(h_k^{ji}(l_k^{ji})_{(-1)}\right)_{(1)}\,(l_k^{ji})_{(2)}\,e_i\tt
\left(h_k^{ji}(l_k^{ji})_{(-1)}\right)_{(2)} e_i^*\nonumber\\
&\qquad
-\sum_{i,s}\left(h_i^{js}(l_i^{js})_{(-2)}{(l_i^{js})_{(-1)}}_{(-1)}\right)_{(1)}\,e_i\tt\nonumber\\
&\hskip
2cm\left(h_i^{js}(l_i^{js})_{(-2)}{(l_i^{js})_{(-1)}}_{(-1)}\right)_{(2)}
{(l_i^{js})_{(-1)}}_{(2)}\, e_s^*\nonumber\\
&\qquad+\sum_{r,i}
\left((g_j^{ir})_{(-2)}{(g_j^{ir})_{(-1)}}_{(-1)}(f_j^{ir})_{(-1)}\right)_{(1)}\,
e_i\tt \nonumber\\
&\qquad\hskip 1cm
\left((g_j^{ir})_{(-2)}{(g_j^{ir})_{(-1)}}_{(-1)}(f_j^{ir})_{(-1)}\right)_{(2)}\,(f_j^{ir})_{(2)}
{(g_j^{ir})_{(-1)}}_{(2)}e_r\nonumber\\
&=\sum_{i,r} S(f_j^{ir})\,e_i\tt S(g_j^{ir})e_r\nonumber\\
&=\delta_L(e_j).
\end{align*}
Similarly, by using Proposition~\ref{duality-thm} with (\ref{eq:
manin-bkt}), and then skew-symmetry, we get
\begin{align*}
(d(1\tt_H r))*e_j^*&=\sum_{s,i}{(l_j^{is})_{(-1)}}_{(1)}
{{(l_j^{is})_{(-2)}}_{(-1)}}_{(1)}
{(h_j^{is})_{(-1)}}_{(1)}(h_j^{is})_{(2)}
{(l_j^{is})_{(-2)}}_{(2)}\,e_s^*\nonumber\\
&\hskip 3cm\tt
{(l_j^{is})_{(-1)}}_{(2)}{{(l_j^{is})_{(-2)}}_{(-1)}}_{(2)}
{(h_j^{is})_{(-1)}}_{(2)}\, e_i^*\nonumber\\
&\qquad-\sum_{r,i} (f_i^{jr})_{(1)} {(g_i^{jr})_{(-1)}}_{(1)}
{{(g_i^{jr})_{(-2)}}_{(-1)}}_{(1)}
{(g_i^{jr})_{(-2)}}_{(2)}\,e_r\nonumber\\
& \hskip 3cm \tt (f_i^{jr})_{(2)}{(g_i^{jr})_{(-1)}}_{(2)}
{{(g_i^{jr})_{(-2)}}_{(-1)}}_{(2)}\,e_i^*\nonumber\\
&\qquad+\sum_{i,k} (f_k^{ji})_{(1)} {(g_k^{ji})_{(-1)}}_{(1)}e_i
 \tt (f_k^{ji})_{(2)}{(g_k^{ji})_{(-1)}}_{(2)}(g_k^{ji})_{(2)}e_k^*\nonumber\\
&=\sum_{i,s} S(l_j^{is}) e_s^*\tt S(h_j^{is})e_i^*\nonumber\\
&=-\sum_{s,i}  S(h_j^{si})e_s^*\tt S(l_j^{si})e_i^*\nonumber\\
&=-\delta_{L^*}(e_j^*).
\end{align*}
It remains to see that $r$ gives us a quasitriangular structure
(recall Definition \ref{def:quasi}). Using (\ref{[[r,r]]}), we
have

\begin{align*}
[[r,r]]&= \mu_{-1}^3([e_j,e_i]\otimes e^*_j\otimes e^*_i)
-\mu_{-2}^4(e_i\otimes[e_j,e^*_i]\otimes e^*_j)
%\nonumber\\
%&\hskip 7cm
-\mu_{-3}^2(e_i\otimes e_j\otimes[e^*_j,e^*_i])\nonumber\\
&=\sum_{k,j,i} \Big( (l_k^{ji})_{(2)}e_k\tt
(l_k^{ji})_{(1)}S(h_k^{ji})e_j^*\tt e_i^* \nonumber \\
&\qquad\qquad + e_i\tt {(l_i^{jk})_{(-1)}}_{(2)} e_k^*\tt
{(l_i^{jk})_{(-1)}}_{(1)} (l_i^{jk})_{(2)} S(h_i^{jk})e_j^*
\nonumber\\
&\qquad\qquad- e_i\tt( f_j^{ik})_{(2)} {(g_j^{ik})_{(-1)}}_{(2)}
e_k\tt ( f_j^{ik})_{(1)} {(g_j^{ik})_{(-1)}}_{(1)}
(g_j^{ik})_{(2)}
e_j^*\nonumber\\
&\qquad\qquad- e_i\tt (g_j^{ki})_{(1)}S(f_j^{ki}) e_k\tt
(g_j^{ki})_{(2)}e_j^*\Big).
\end{align*}
 Now, the last two terms cancels out by skew-symmetry (after interchanging the
summation  indices $j$ and $k$). Then, it is easy to see, using
skew symmetry, that\linebreak $[[r,r]]=0 $ mod $ H^+\cdot(L\tt
L\tt L)$, since
\begin{align*}
[[r,r]]&= \sum_{k,j,i}\big[ (l_k^{ji})_{(2)}e_k\tt
(l_k^{ji})_{(1)}S(h_k^{ji})e_j^*\tt e_i^* \nonumber \\
&\hskip 3.5cm + e_i\tt ((l_i^{jk})_{(-1)})_{(2)} e_k^*\tt
((l_i^{jk})_{(-1)})_{(1)} (l_i^{jk})_{(2)} S(h_i^{jk})e_j^* \big]
\nonumber\\
&= \sum_{k,j,i}\left[ \left(l_k^{ji})_{(2)}\tt
(l_k^{ji})_{(1)}S(h_k^{ji})\tt 1\right)+ \left(1 \tt
S(l_i^{jk})\tt S(h_k^{ij})\right)\right](e_k\tt e_j^*\tt e_k^*)
\nonumber\\
&= \sum_{k,j,i}\left[ \left(l_k^{ji})_{(2)}\tt
(l_k^{ji})_{(1)}S(h_k^{ji})\tt 1\right)- \left(1 \tt
S(h_k^{ji})\tt S(l_k^{ji})\right)\right](e_k\tt e_j^*\tt e_k^*)
\nonumber\\
&= \sum_{k,j,i} \left[(l_k^{ji})_{(1)}-\varepsilon(
(l_k^{ji})_{(1)})\mathbf{1}\right] \left(1\tt S(h_k^{ji})\tt
(l_k^{ji})_{(2)}\right)(e_k\tt e_j^*\tt e_k^*),
\end{align*}
and $ \left((l_k^{ji})_{(1)}-\varepsilon(
(l_k^{ji})_{(1)})\mathbf{1}\right)\in H^+$. Finally, by similar
computations, it is possible to verify that
\begin{displaymath}
\mu_2\left(e_i *\ (r+r_{21})\right) = \mu_2\Big(e_i * \big(\sum_j
e_j\otimes e_j^* + e_j^*\otimes e_j\big)\Big)=0,
\end{displaymath}
finishing the proof.\end{proof}

\vskip 1cm

\section{$\A_Y (L)$ and the annihilation algebra}\label{subanni}

\vskip .5cm

A Lie algebra is usually associated to a  Lie pseudoalgebra $L$,
that is  the annihilation algebra, see Remark 7.2 in \cite{BDK}.
Their construction, at first sight, even for the conformal case,
namely $H=\mathbb{C}[\partial]$, is not natural unless you look at
a similar notion from vertex algebra theory. In this section,
using the language of $H$-coalgebras, we will see it as a
convolution algebra of certain type, obtaining a more natural and
conceptual construction.

Here we will recall the definition of the annihilation algebra of
a pseudoalgebra.  Let $Y$ be a commutative associative
$H$-\difalg\ with a right action of $H$, and let $L$ be a Lie
$H$-\psalg. We provide $Y \tt L$ with the following structure of a
left $H$-module:
\begin{equation*}
h(x \tt a) = x h_{(-1)} \tt h_{(2)} a, \qquad h\in H, \; x\in Y,
\; a\in L.
\end{equation*}
Then define a Lie pseudobracket on $Y \tt L$ by the formula:
\begin{equation}\lbb{affps}
[(x \tt a)*(y \tt b)] = \tsum_i\, \bigl( {f_i}_{(1)} \tt
{g_i}_{(1)} \bigr) \tt_H \bigl( (x {f_i}_{(2)}) (y {g_i}_{(2)})
\tt e_i \bigr),
\end{equation}
if $[a*b]=\sum_i (f_i\otimes g_i)\otimes_H e_i$. It is easy to
check that \eqref{affps} is well defined and endows $Y \tt L$ with
the structure of a Lie $H$-\psalg. Now, we  define  the Lie
algebra
\begin{equation*}
\A_Y (L) = (Y \tt L) / H_+(Y \tt L),
\end{equation*}
with bracket
\begin{align*}
[\overline{x\tt a},\overline{y\tt b}]&=\sum_i
\varepsilon({f_i}_{(1)}) \varepsilon({g_i}_{(1)})\left(
\overline{(x{f_i}_{(2)})(y{g_i}_{(2)} )\tt
e_i}\right)\nonumber\\
&= \sum_i\overline{(x{f_i})(y{g_i} )\tt e_i}.
\end{align*}

In the case $H=\kk[\d]$, $Y=\kk[t,t^{-1}]$, $\d=-\d_t$, the Lie
$\kk[\d]$-\psalg\ $Y \tt L$, (i.e.: in the classical conformal
algebra case),  is known as an {\em affinization\/} of the
conformal algebra $L$ , (see \cite{K2}).

Now, take  $Y=X=H^*$. Recall that $X$ has a right $H$-module
structure given by
$$
\langle x\cdot h,f\rangle=\langle x,S(h)f\rangle,
$$
with $h,\,f\in H$ and $x\in  X$.
%%%%%%%%%%%%%%%%%%%%%%%%%%%
\begin{definition}\lbb{dannih}
The Lie algebra $\A(L) \equiv \A_X (L) $ is called the {\em
annihilation algebra\/} of the \psalg\ $L$.
\end{definition}

Now, we can give an interpretation of $\A_Y(L)$, as a convolution
algebra.

\begin{theorem}
Let $L$ be a free finite Lie pseudoalgebra, let $(L^* , \delta)$
be the corresponding Lie $H$-coalgebra. Then there is an
isomorphism of Lie algebras
\begin{displaymath}
\A_Y(L)\ \simeq \ \hbox{Hom}_H(L^*,Y).
\end{displaymath}
with the bracket in the  space of homomorphisms given by
\begin{displaymath}
[\, f\, , \, g\, ]= m \circ (f\otimes g) \circ \delta,
\end{displaymath}
where $m$ stands for the multiplication in $Y$.
\end{theorem}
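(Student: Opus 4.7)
The plan is to prove this by explicit computation in a chosen $H$-basis. Let $\{e_i\}_{i=1}^n$ be an $H$-basis of $L$ with corresponding dual basis $\{e^i\}_{i=1}^n$ of the free $H$-module $L^*$. Writing $[e_i * e_j] = \sum_k (h_k^{ij} \tt l_k^{ij}) \tt_H e_k$, Theorem~\ref{duality-thm}(a) yields $\delta(e^k) = \sum_{i,j} S(h_k^{ij}) e^i \tt S(l_k^{ij}) e^j$. I would define the candidate isomorphism
\begin{displaymath}
\Psi: \Hom_H(L^*, Y) \longrightarrow \A_Y(L), \qquad \Psi(f) := \overline{\tsum_i f(e^i) \tt e_i}.
\end{displaymath}

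First I would prove that $\Psi$ is a linear bijection. Since $L^*$ is free on $\{e^i\}$, evaluation identifies $\Hom_H(L^*, Y) \cong Y^n$. On the other side, define $\bar\Phi : Y \tt L \to Y^n$ by sending $y \tt h e_i$ to $y \cdot h$ in the $i$-th slot, where the dot is the given right $H$-action on $Y$. The key verification is that $\bar\Phi$ annihilates $H_+(Y \tt L)$: for $h \in H_+$,
\begin{displaymath}
\bar\Phi(h(y \tt e_i)) = (y h_{(-1)}) \cdot h_{(2)} = y \cdot (h_{(-1)} h_{(2)}) = \ep(h)\, y = 0,
\end{displaymath}
using cocommutativity of $H$ together with \eqref{antip}. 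Thus $\bar\Phi$ descends to $\A_Y(L) \to Y^n$, the obvious section $(y_i) \mapsto \overline{\sum_i y_i \tt e_i}$ realizes $\Psi$ as the identity of $Y^n$ under these identifications, and $\Psi$ is a bijection.

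Next I would verify that $\Psi$ intertwines brackets. Applying $m \circ (f \tt g) \circ \delta$ to $e^k$ and using the $H$-linearity of $f, g$, combined with the standard conversion of left $H$-action to right via the antipode (valid because $H$ is cocommutative, so $S^2 = \id$), produces
\begin{displaymath}
[f,g](e^k) = \sum_{i,j} (f(e^i) \cdot h_k^{ij})(g(e^j) \cdot l_k^{ij}),
\end{displaymath}
where the dot is the right $H$-action on $Y$ and the product is in $Y$. On the other hand, the defining formula \eqref{affps} for the bracket of $\A_Y(L)$ gives
\begin{displaymath}
[\Psi(f), \Psi(g)] = \sum_{i,j,k} \overline{(f(e^i) \cdot h_k^{ij})(g(e^j) \cdot l_k^{ij}) \tt e_k},
\end{displaymath}
whose $k$-th coordinate under $\bar\Phi$ is the same expression, so $\Psi$ preserves brackets.

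The main obstacle I anticipate is pinning down the correct $H$-linearity convention in $\Hom_H(L^*, Y)$: $L^*$ is natively a left $H$-module, while the bracket of $\A_Y(L)$ is built using the right $H$-action on $Y$. The cocommutativity of $H$ is essential, as it provides the canonical equivalence between left and right $H$-module structures via the antipode, producing the crucial identity $f(S(h) e^i) = f(e^i) \cdot h$ that bridges the two actions in the bracket computation. A secondary, more technical obstacle is a fully rigorous proof that $\ker \bar\Phi = H_+(Y \tt L)$; an efficient route is to identify $\A_Y(L) \cong Y \tt_H L$ (using the right action on $Y$ and the left on $L$), which, via Kostant's theorem (\thref{tkostant}), reduces to checking that the $Y \tt_H L$-relations hold for primitive and group-like elements of $H$, both of which are immediate from the definition of the $H$-action on $Y \tt L$.
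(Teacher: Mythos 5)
Your proposal is correct and follows essentially the same route as the paper: the paper defines the inverse map $\phi:\A_Y(L)\to\Hom_H(L^*,Y)$ by $\phi(\overline{x\tt a})(f)=x\cdot(l_iS(h_i))$ and checks bijectivity and the bracket by exactly the kind of dual-basis computation you carry out (your $\Psi$ is $\phi^{-1}$, and your identity $f(S(h)e^i)=f(e^i)\cdot h$ is the same antipode-twisted $H$-linearity implicit in the paper's formula). The only cosmetic difference is that you organize bijectivity through the identification of both sides with $Y^n$ via $\bar\Phi$, whereas the paper verifies injectivity and surjectivity of $\phi$ directly; the substance is identical.
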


\begin{proof}Let us define the map
\begin{align*}
\phi: (Y \tt L) / H_+(Y \tt L)&\longrightarrow
\hbox{Hom}_H(L^*,Y),\nonumber\\
\overline{x\tt a}&\longmapsto \phi(\overline{x\tt a})
\end{align*}
   such that for all $f\in L^*$
$$\phi(\overline{x\tt a})(f)=
x\cdot(l_iS(h_i))
$$
if  $f(a)=\sum_i (h_i\tt l_i)\tt_H 1$.
It is straight forward to check that this map is well defined and
$\phi(\overline{x\tt a})\in \hbox{Hom}_H(L^*,Y)$.

Let's check that  $\phi$ is injective. Consider a H-base set for
$L$, namely $\{e_i\}$. Assume that $\phi(\overline{\sum_i x_i\tt
e_i})=0$. This means that $\phi(\overline{\sum_i x_i\tt
e_i})(f)=0$ for all $f\in L^*$. Suppose that there exists
$x_{i_0}\neq 0$ and take $f\in L^*$ such that $f(e_{i})=(1\tt
1)\tt_ H \delta_{i,i_0}$. In this case $\phi(\overline{\sum_i
x_i\tt e_i})(f)=x_{i_0}=0$ which is a contradiction.

Now, consider an $H$-basis $\{f_i\}_{i=1}^n$ of $L^*$, , and
$\alpha\in$ Hom$_H(L^*,Y)$. Set $\alpha(f_i)=\sum_j x^{ij}h^{ij}$,
thus it is easy to check that
$$
\phi(\sum_{i,j} \overline{x^{ij}\tt h^{ij}e_i})(f_k)=\alpha(f_k),
$$
for all $k$, proving surjectivity.

Finally it remains to show that this is a Lie algebra
homomorphism. As always, consider $\{e_i\}$ a basis for $L$ and
$\{e_i^*\}$ the corresponding dual basis for $L^*$. Thus, if
$[e_n*e_m]=\sum_i (h^{nm}_i\tt l^{nm}_i)\tt_H e_i$, we have
\begin{align*}
\phi([\overline{x\tt e_n},\overline{y\tt e_m}])(e_k^*)&=\phi\left(
\sum_i\overline{(x h^{nm}_i)(yl^{nm}_i)\tt e_i}\right)(e_k^*)\nonumber\\
&=(x h^{nm}_k)(yl^{nm}_k).
\end{align*}
On the other side,
\begin{align*}
[\phi(\overline{x\tt e_n}),\phi(\overline{y\tt
e_m})](e_k^*)&=\left( m\circ (\phi(\overline{x\tt e_n})\tt
\phi(\overline{y\tt
e_m}))\circ \delta \right)(e_k^*)\nonumber\\
&=\sum_{s,t} m\left( \phi(\overline{x\tt e_n})(S(h^{st}_k)
e_s^*)\tt \phi(\overline{y\tt e_m})(S(l^{st}_k)
e_t^*)\right)\nonumber\\
&= \sum_{s,t} m\left( \delta_{n,s}\ (x\cdot h_k^{st})\tt
\delta_{m,t} (y\cdot l^{st}_k)\right)\nonumber\\
&= (x h^{nm}_k)(yl^{nm}_k),
\end{align*}
finishing the proof.
\end{proof}
%%%%%%%%%%%%%%%%%%%%%%%%%%%

%%%%%%%%%%%%%%%%%%%%%%%%%%%

\noindent {\bf Acknowledgement. } The  authors were supported in
 part by
Conicet, ANPCyT, Agencia C\'ordoba Ciencia and Secyt-UNC
(Argentina).

%%%%%%%%%%%%%%%%%%%%%%%%%%%%%%%%%%%%%%%%%%%%%%%%%%%%%%%%%%%%%%%%%%%%%%%%%
%\bibliographystyle{amsplain}

%%%%%%%%%%%%%%%%%%%%%%%%%%%%%%%%%%%%%%%%%%%%%%%%%%%%%%%%%%%%%%%%%%%%%%%%


\begin{thebibliography}{11}
%%%%%%%%%%%%%%%%%%%%%%%%%%%%%%%%%%%%%%%%%%%%%%%%%%%%%%%%%%%%%%%%%%%%%%%%%


\bibitem[1]{BDK} B.~Bakalov, A. D'Andrea and V.~G.~Kac,
{\textit{Theory of finite pseudoalgebras}}, Adv. Math.  {\bf 162}
(2001), 1--140.


\bibitem[2]{BDK2}  B.~Bakalov, A. D'Andrea and V.~G.~Kac,
{\textit{Irreducible modules over finite simple Lie
pseudoalgebras. I. Primitive pseudoalgebras of type $W$ and $S$.}}
Adv. Math. {\bf 204} (2006), no. 1, 278–346.

\bibitem[3]{BDK3}  B.~Bakalov, A. D'Andrea and V.~G.~Kac,
{\textit{Irreducible Modules over Finite Simple Lie Pseudoalgebras
II. Primitive Pseudoalgebras of Type K}},     arXiv:1003.6055.


\bibitem[4]{BKV}
B.~Bakalov, V.~G.~Kac, and A.~A.~Voronov, {\textit{Cohomology of
conformal algebras}}, Comm. Math. Phys. {\bf 200} (1999),
561--598.

\bibitem[5]{BD}
A.~Beilinson and V.~Drinfeld, {\textit{Chiral algebras}},
preprint.



\bibitem [6]{BKL} C. Boyallian, V. G. Kac and J. I.  Liberati, {\em
On the classification of subalgebras of Cend$_N$ and $gc_N$},
Journal of Algebra {\bf 260} (2003), 32-63. math-ph/0203022.




\bibitem[7]{DK}
A.~D'Andrea and V.~G.~Kac, {\textit{Structure theory of finite
conformal algebras}}, Selecta Math. (N.S.) {\bf 4} (1998), no. 3,
377--418.


\bibitem[8]{ES}
P. Etingof and O. Schiffmann, {\textit{Lectures on quantum
groups}}, International Press, 1998.


\bibitem[9]{GK}
M.~Golenishcheva-Kutuzova and V.~G.~Kac,
{\textit{$\Gamma$-conformal algebras}}, J. Math. Phys. {\bf 39}
(1998), no. 4, 2290--2305.


\bibitem[10]{K2}
\bysame, {\textit{Vertex algebras for beginners}}, University
Lecture Series, {\bf 10}. American Mathematical Society,
Providence, RI, 1996. Second edition 1998.

\bibitem[11]{K3}
\bysame, {\textit{The idea of locality}}, in ``Physical
applications and mathematical aspects of geometry, groups and
algebras'', H.-D. Doebner et al, eds., World Sci., Singapore,
1997, pp. 16--32; {\texttt{q-alg/9709008}}.

\bibitem[12]{K4}
\bysame, {\textit{Formal distribution algebras and conformal
algebras}}, in Proc. XIIth International Congress of Mathematical
Physics (ICMP '97) (Brisbane), 80--97, Internat. Press, Cambridge,
MA, 1999; {\texttt{q-alg/9709027}}.






\bibitem[13]{L}
J.~Lambek, {\textit{Deductive systems and categories. II. Standard
constructions and closed categories}}, Lecture Notes in Math., 86,
Springer, Berlin, 1969, pp. 76--122.



\bibitem[14]{L1} J. Liberati,
{\em On conformal bialgebras}, J. Algebra {\bf 319} (2008), no. 6,
2295–2318.

\bibitem[15]{Sw}
M.~Sweedler, {\em Hopf algebras}, Math. Lecture Note Series, W. A.
Benjamin, Inc., New York, 1969.



%%%%%%%%%%%%%%%%%%%%%%%%%%%%%%%%%%%%%%%%%%%%%%%%%%%%%%%%%%%%%%%%%%%%%%%%
\end{thebibliography}
\end{document}